\documentclass{article}
\usepackage{amsmath}
\usepackage{amssymb}
\usepackage{amsfonts}

\setcounter{MaxMatrixCols}{10}

\newtheorem{theorem}{Theorem}[section]

\newtheorem{case}{Case}
\newtheorem{claim}{Claim}[section]

\newtheorem{condition}{Condition}[section]

\newtheorem{corollary}{Corollary}

\newtheorem{definition}{Definition}[section]
\newtheorem{example}{Example}

\newtheorem{lemma}{Lemma}[section]

\newtheorem{proposition}{Proposition}[section]

\newenvironment{proof}[1][Proof]{\noindent\textbf{#1.} }{\ \rule{0.5em}{0.5em}}

\input{tcilatex}
\begin{document}

\title{IBN-varieties of algebras.}
\author{A. Tsurkov \\
Mathematical Department, CCET,\\
Federal University of Rio Grande do Norte (UFRN),\\
Av. Senador Salgado Filho, 3000,\\
Campus Universit\'{a}rio, Lagoa Nova, \\
Natal - RN - Brazil - CEP 59078-970;\\
Mathematical Department,\\
Institute of Mathematics and Statistics,\\
University of S\~{a}o Paulo, Rua do Mat\~{a}o, 1010,\\
S\~{a}o Paulo - SP - Brazil - CEP 05508-090;\\
arkady.tsurkov@gmail.com}
\maketitle

\begin{abstract}
The concept of a variety with IBN (invariant basic number) propriety first
appeared in the ring theory. But we can define this concept for an arbitrary
variety $\Theta $ of universal algebras with an arbitrary signature $\Omega $%
; see Definition \ref{IBN-variety}.

The proving of the IBN propriety of some variety is very important in
universal algebraic geometry. This is a milestone in the study of the
relation between geometric and automorphic equivalences of algebras of this
variety.

In this paper we prove very simple but very useful for studying of IBN
proprieties of different varieties Theorems \ref{terms} and \ref{main}. We
will consider some applications of this theorem.

We will consider many-sorted universal algebras as well as one-sorted. So
all concepts and all results will by generalized for the many-sorted
case.\bigskip

\textit{Keywords:} Universal algebras, IBN-varieties, representations of
groups, representations of Lie algebras.\medskip

Mathematics Subject Classification 2020: 08B99, 17B10, 20C99.
\end{abstract}

\section{Motivation and introduction}

\setcounter{equation}{0}

In this paper we denote a cardinality of a set $A$ by $\left\vert
A\right\vert $. The cardinality $\aleph _{0}$ we denote by $\infty $. We
will denote a disjoint union of sets by symbol $\uplus $.

We will consider all problems and prove all results in this paper in the
general situation of many-sorted universal algebras. Also the results of
this paper are more important in the studying of varieties of many-sorted
universal algebras.

We will define the many-sorted universal algebras as in \cite%
{TsurkovManySorted}. We suppose that there is a set $\Gamma $ of names of
sorts. A many-sorted algebra is a set $H$ which has "sorting", that is, the
mapping $\eta _{H}:H\rightarrow \Gamma $. The set $\eta _{H}^{-1}\left(
i\right) $ is the set of elements of the sort $i$ of the algebra $H$, where $%
i\in \Gamma $. We denote $\eta _{H}^{-1}\left( i\right) =H^{\left( i\right)
} $. An element from $H^{(i)}$ we denote by $h^{(i)}$, in order to emphasize
that it is an element of the sort $i$. The situation, when exists $i\in
\Gamma $ such that $H^{(i)}=\varnothing $, is possible. For every subset $%
Y\subset H$ we denote the set $\eta _{H}\left( Y\right) \subset \Gamma $ by $%
\Gamma _{Y}$. In particular we denote $\mathrm{im}\eta _{H}=\left\{ i\in
\Gamma \mid H^{\left( i\right) }\neq \varnothing \right\} $ by $\Gamma _{H}$.

We denote by $\Omega $ a signature (set of operations) of our algebras. In
many-sorted case every operation $\omega \in \Omega $ has the type $\tau
_{\omega }=\left( i_{1},\ldots ,i_{n};j\right) $, where $n\in 
\mathbb{N}
$, $i_{1},\ldots ,i_{n},j\in \Gamma $. Operation $\omega \in \Omega $ of the
type $\left( i_{1},\ldots ,i_{n};j\right) $ is a mapping $\omega :H^{\left(
i_{1}\right) }\times \ldots \times H^{\left( i_{n}\right) }\rightarrow
H^{\left( j\right) }$.

\begin{definition}
We suppose that many-sorted algebras $A$ and $B$ have a same set $\Gamma $
of names of sorts and a same signature $\Omega $. We say that the mapping $%
\varphi :A\rightarrow B$ is a \textbf{homomorphism} from algebra $A$ to
algebra $B$ if the equation%
\begin{equation}
\eta _{A}=\eta _{B}\varphi  \label{hom_1}
\end{equation}%
fulfills and for every $\omega \in \Omega $, the equation%
\begin{equation}
\varphi \left( \omega \left( a_{1},\ldots ,a_{n}\right) \right) =\omega
\left( \varphi \left( a_{1}\right) ,\ldots ,\varphi \left( a_{n}\right)
\right)  \label{hom_2}
\end{equation}%
holds.
\end{definition}

\noindent We suppose in (\ref{hom_2}) that $\tau _{\omega }=\left(
i_{1},\ldots ,i_{n};j\right) $, $a_{k}\in A^{\left( i_{k}\right) }$, $1\leq
k\leq n$.

The equation (\ref{hom_1}) means that the mapping $\varphi $ transform all
elements of every sort $i\in \Gamma $ of algebra $A$ to elements of same
sort of algebra $B$. Also we can conclude from (\ref{hom_1}) that no exist
homomorphisms from $A$ to $B$ if $\Gamma _{A}\nsubseteq \Gamma _{B}$. We
denote by $\mathrm{Hom}\left( A,B\right) $ the set of homomorphisms from
algebra $A$ to algebra $B$. As we can see, the situation when $\mathrm{Hom}%
\left( A,B\right) =\varnothing $ is possible. We denote $\varphi _{\mid
A^{\left( i\right) }}$ by $\varphi ^{\left( i\right) }$, $i\in \Gamma $. By (%
\ref{hom_1}) we have that $\varphi ^{\left( i\right) }$ is a mapping from $%
A^{\left( i\right) }$ to $B^{\left( i\right) }$.

A congruence $U$ in an universal algebra $H$ with a set $\Gamma $ of names
of sorts and a signature $\Omega $ is a subset $U\subseteq
\biguplus\limits_{i\in \Gamma }\left( H^{\left( i\right) }\times H^{\left(
i\right) }\right) $, such that for every $\omega \in \Omega $ with $\tau
_{\omega }=\left( i_{1},\ldots ,i_{n};j\right) $ and for every $\left(
h_{k},g_{k}\right) \in U\cap \left( H^{\left( i_{k}\right) }\times H^{\left(
i_{k}\right) }\right) $, where $1\leq k\leq n$, holds $\left( \omega \left(
h_{1},\ldots ,h_{n}\right) ,\omega \left( g_{1},\ldots ,g_{n}\right) \right)
\in U$. Every congruence $U$ in an algebra $H$ gives as a natural
epimorphism $\delta _{U}:H\rightarrow H/U$ from the algebra $H$ to the
quotient algebra $H/U$. In our consideration we will use without special
reminder the

\begin{lemma}
If $H,G$ two algebras with same set of names of sorts and same signature, $U$
is a congruence in algebra $H$, $V$ is a congruence in algebra $G$, $\varphi
:H\rightarrow G$ is a homomorphism and $\varphi \left( U\right) =\left\{
\left( \varphi \left( h^{\prime }\right) ,\varphi \left( h^{\prime \prime
}\right) \right) \mid \left( h^{\prime },h^{\prime \prime }\right) \in
U\right\} \subseteq V$ then there exists a homomorphism $\psi
:H/U\rightarrow G/V$, such that $\delta _{V}\varphi =\psi \delta _{U}$,
\end{lemma}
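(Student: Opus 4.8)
The plan is to define $\psi$ by the only formula compatible with the required identity $\delta _{V}\varphi =\psi \delta _{U}$. Since $\delta _{U}$ is an epimorphism, every element of $H/U$ has the form $\delta _{U}\left( h\right) $ for some $h\in H$, and the identity forces $\psi \left( \delta _{U}\left( h\right) \right) =\delta _{V}\left( \varphi \left( h\right) \right) $. I would take this as the definition of $\psi $ and then verify, in turn, that it is well defined, that it preserves sorts, that it commutes with all operations, and finally that the desired identity holds (the last being immediate from the definition).

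The main obstacle is the well-definedness of $\psi $, and this is precisely where the hypothesis $\varphi \left( U\right) \subseteq V$ enters. Suppose $\delta _{U}\left( h^{\prime }\right) =\delta _{U}\left( h^{\prime \prime }\right) $; this means $\left( h^{\prime },h^{\prime \prime }\right) \in U$, and since $U$ is contained in $\biguplus_{i\in \Gamma }\left( H^{\left( i\right) }\times H^{\left( i\right) }\right) $ the elements $h^{\prime }$ and $h^{\prime \prime }$ lie in a common sort. By hypothesis $\left( \varphi \left( h^{\prime }\right) ,\varphi \left( h^{\prime \prime }\right) \right) \in \varphi \left( U\right) \subseteq V$, hence $\delta _{V}\left( \varphi \left( h^{\prime }\right) \right) =\delta _{V}\left( \varphi \left( h^{\prime \prime }\right) \right) $. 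Thus the value $\psi \left( \delta _{U}\left( h\right) \right) $ does not depend on the choice of the representative $h$, so $\psi $ is a well-defined map from $H/U$ to $G/V$.

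It remains to check that $\psi $ is a homomorphism. For the sorting equation (\ref{hom_1}) I would use that $\delta _{U}$, $\delta _{V}$ and $\varphi $ all satisfy (\ref{hom_1}): for $h\in H^{\left( i\right) }$ one computes $\eta _{G/V}\left( \psi \left( \delta _{U}\left( h\right) \right) \right) =\eta _{G/V}\left( \delta _{V}\left( \varphi \left( h\right) \right) \right) =\eta _{G}\left( \varphi \left( h\right) \right) =\eta _{H}\left( h\right) =\eta _{H/U}\left( \delta _{U}\left( h\right) \right) $, so $\eta _{H/U}=\eta _{G/V}\psi $. For the operation equation (\ref{hom_2}), given $\omega \in \Omega $ with $\tau _{\omega }=\left( i_{1},\ldots ,i_{n};j\right) $ and elements $\delta _{U}\left( h_{k}\right) $ of the appropriate sorts, I would chase the identity through the three homomorphisms $\delta _{U}$, $\varphi $, $\delta _{V}$ in succession: $\psi \left( \omega \left( \delta _{U}\left( h_{1}\right) ,\ldots ,\delta _{U}\left( h_{n}\right) \right) \right) =\psi \left( \delta _{U}\left( \omega \left( h_{1},\ldots ,h_{n}\right) \right) \right) =\delta _{V}\left( \varphi \left( \omega \left( h_{1},\ldots ,h_{n}\right) \right) \right) =\delta _{V}\left( \omega \left( \varphi \left( h_{1}\right) ,\ldots ,\varphi \left( h_{n}\right) \right) \right) =\omega \left( \delta _{V}\left( \varphi \left( h_{1}\right) \right) ,\ldots ,\delta _{V}\left( \varphi \left( h_{n}\right) \right) \right) =\omega \left( \psi \left( \delta _{U}\left( h_{1}\right) \right) ,\ldots ,\psi \left( \delta _{U}\left( h_{n}\right) \right) \right) $, using at each step that the relevant map respects $\omega $. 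Finally, the identity $\delta _{V}\varphi =\psi \delta _{U}$ holds by construction, completing the argument.
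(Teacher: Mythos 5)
Your proof is correct and is exactly the standard argument the author has in mind: the paper states this lemma without proof (remarking only that it ``can be proved very easy''), and your construction of $\psi$ by $\psi\left(\delta _{U}\left( h\right) \right) =\delta _{V}\left( \varphi \left( h\right) \right) $, with well-definedness from $\varphi \left( U\right) \subseteq V$ and the routine verification of (\ref{hom_1}) and (\ref{hom_2}), is the intended elementary proof. No gaps.
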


\noindent which can be proved very easy.

The definition of a kernel of a homomorphism of many-sorted algebras should
be formulated as follows. A kernel of a homomorphism $\varphi :A\rightarrow
B $ of algebras with a set $\Gamma $ of names of sorts is a subset $\ker
\varphi \subseteq \biguplus\limits_{i\in \Gamma }\left( A^{\left( i\right)
}\times A^{\left( i\right) }\right) $ such that for every $\left(
a,a^{\prime }\right) \in \ker \varphi $ the equality $\varphi \left(
a\right) =\varphi \left( a^{\prime }\right) $ holds. A kernel of every
homomorphism is a congruence.

For a little bit different approach to the concept of a many-sorted algebra
see \cite{Pbook}.

We fix the set of names of sorts $\Gamma $ and the signature $\Omega $. We
take for every $i\in \Gamma $ the countable set $X_{0}^{(i)}=\left\{
x_{1}^{(i)},\ldots ,x_{k}^{(i)},\ldots \right\} $. The elements of sets $%
X_{0}^{(i)}$, $i\in \Gamma $, we can call symbols, or letters, or variables,
or generators of the sort $i$. After this we consider the disjoint union $%
X_{0}=\biguplus\limits_{i\in \Gamma }X_{0}^{(i)}$. We choose the subset $%
X=\biguplus\limits_{i\in \Gamma }X^{(i)}\subset X_{0}$, where $%
X^{(i)}=X_{0}^{(i)}\cap X$. It is possible that there exists $i\in \Gamma $
such that $X^{(i)}=\varnothing $. There exists a mapping $\eta
_{X}:X\rightarrow \Gamma $, such that $\eta _{X}\left( x\right) =i$ if $x\in
X^{(i)}\subset X$. Similar to above we can denote $\mathrm{im}\eta
_{X}=\Gamma _{X}$.

We consider the algebra of terms with a set of names of sorts $\Gamma $ and
a signature $\Omega $ over the set (alphabet) $X$. We denote this algebra by 
$\mathfrak{T}\left( \Gamma ,\Omega ,X\right) $ or by $\mathfrak{T}\left(
X\right) $, if it cannot cause errors. We can define by induction by
construction in algebra $\mathfrak{T}\left( X\right) $ sets of terms of the
sort $i$, where $i\in \Gamma $, i.e., sets $\left( \mathfrak{T}\left(
X\right) \right) ^{(i)}$. Of course, the decomposition $\mathfrak{T}\left(
X\right) =\biguplus\limits_{i\in \Gamma }\left( \mathfrak{T}\left( X\right)
\right) ^{(i)}$ fulfills. Hence, exists a mapping $\eta _{\mathfrak{T}\left(
X\right) }:\mathfrak{T}\left( X\right) \rightarrow \Gamma $, such that $\eta
_{\mathfrak{T}\left( X\right) }\left( t\right) =i$ if $t\in \left( \mathfrak{%
T}\left( X\right) \right) ^{(i)}$. The equality $\eta _{\mathfrak{T}\left(
X\right) \mid X}=\eta _{X}$ holds.

It is easy to prove by induction by construction the following

\begin{proposition}
\label{free_empty}For every $X\subset X_{0}$, every algebra $H$ with
signature $\Omega $ such that $\Gamma _{X}\subseteq \Gamma _{H}$ and every 
\textit{mapping }$\varphi ^{\ast }:X\rightarrow H$\textit{\ such that }$\eta
_{X}=\eta _{H}\varphi ^{\ast }$\textit{, there exists unique homomorphism }$%
\varphi :\mathfrak{T}\left( X\right) \rightarrow H$\textit{\ such that }$%
\varphi _{\mid X}=\varphi ^{\ast }$\textit{.}
\end{proposition}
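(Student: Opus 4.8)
The plan is to argue by induction on the construction of terms in $\mathfrak{T}\left( X\right) $, exactly as the term algebra itself is built up. Before constructing $\varphi $, however, I would first settle the point that the sort condition (\ref{hom_1}) can be met at all, i.e. that $\Gamma _{\mathfrak{T}\left( X\right) }\subseteq \Gamma _{H}$; this is the feature of the many-sorted setting that the hypothesis $\Gamma _{X}\subseteq \Gamma _{H}$ is there to guarantee. I would prove $\Gamma _{\mathfrak{T}\left( X\right) }\subseteq \Gamma _{H}$ by induction on construction: a variable $x\in X^{(i)}$ has sort $i\in \Gamma _{X}\subseteq \Gamma _{H}$; and if $t=\omega \left( t_{1},\ldots ,t_{n}\right) $ with $\tau _{\omega }=\left( i_{1},\ldots ,i_{n};j\right) $, then by the inductive hypothesis each $i_{k}\in \Gamma _{H}$, so each $H^{\left( i_{k}\right) }\neq \varnothing $, and choosing $h_{k}\in H^{\left( i_{k}\right) }$ the element $\omega \left( h_{1},\ldots ,h_{n}\right) \in H^{\left( j\right) }$ witnesses $j\in \Gamma _{H}$ (the nullary case $n=0$ being immediate since $\omega $ names an element of $H^{\left( j\right) }$).

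For existence I would define $\varphi $ recursively: set $\varphi \left( x\right) =\varphi ^{\ast }\left( x\right) $ for $x\in X$, and set $\varphi \left( \omega \left( t_{1},\ldots ,t_{n}\right) \right) =\omega \left( \varphi \left( t_{1}\right) ,\ldots ,\varphi \left( t_{n}\right) \right) $ for a composite term. This recursion is well posed because of unique readability of terms: every element of $\mathfrak{T}\left( X\right) $ is either a letter of $X$ or is of the form $\omega \left( t_{1},\ldots ,t_{n}\right) $ for a uniquely determined $\omega \in \Omega $ and uniquely determined argument terms $t_{1},\ldots ,t_{n}$ of the sorts prescribed by $\tau _{\omega }$. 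I would then verify (\ref{hom_1}) by induction — on letters it is the hypothesis $\eta _{X}=\eta _{H}\varphi ^{\ast }$ together with $\eta _{\mathfrak{T}\left( X\right) \mid X}=\eta _{X}$, and on a composite term of sort $j$ it follows because $\varphi \left( t\right) =\omega \left( \ldots \right) \in H^{\left( j\right) }$ by the definition of the operation $\omega $ in $H$. Equation (\ref{hom_2}) holds for the generating operations immediately by the defining clause of the recursion, and hence for $\varphi $ everywhere.

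Uniqueness is again an induction on construction: if $\psi :\mathfrak{T}\left( X\right) \rightarrow H$ is any homomorphism with $\psi _{\mid X}=\varphi ^{\ast }$, then $\psi $ and $\varphi $ agree on letters by hypothesis, and agreement propagates through composite terms because both must satisfy (\ref{hom_2}), forcing $\psi \left( \omega \left( t_{1},\ldots ,t_{n}\right) \right) =\omega \left( \psi \left( t_{1}\right) ,\ldots ,\psi \left( t_{n}\right) \right) =\omega \left( \varphi \left( t_{1}\right) ,\ldots ,\varphi \left( t_{n}\right) \right) =\varphi \left( \omega \left( t_{1},\ldots ,t_{n}\right) \right) $. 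I expect the only genuinely delicate step to be the preliminary one, $\Gamma _{\mathfrak{T}\left( X\right) }\subseteq \Gamma _{H}$: it is the precise place where the empty-sort phenomenon enters and where the hypothesis $\Gamma _{X}\subseteq \Gamma _{H}$ is used. Everything after that is the routine structural recursion familiar from the one-sorted free-algebra theorem, which is why the proposition is asserted to be provable by induction on construction.
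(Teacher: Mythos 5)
Your proposal is correct and follows exactly the route the paper intends: the paper offers no written proof, stating only that the proposition "is easy to prove by induction by construction," and your structural recursion (existence via unique readability, uniqueness by induction on terms, plus the preliminary verification that $\Gamma _{\mathfrak{T}\left( X\right) }\subseteq \Gamma _{H}$ so that the sort condition (\ref{hom_1}) can be satisfied) is precisely that argument carried out in full. The attention you give to the empty-sort issue is the one point the many-sorted setting adds to the classical one-sorted statement, and you handle it correctly.
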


From this proposition we can easy prove the projective property of the
algebra of terms

\begin{proposition}
\label{term_proj}Let $\mathfrak{T}\left( \Gamma ,\Omega ,X\right) =\mathfrak{%
T}\left( X\right) $ the algebra of terms with a set of names of sorts $%
\Gamma $ and a signature $\Omega $ over the set $X$ and $A,B$ two algebras
with same set of names of sorts and same signature. If there exist a
homomorphism $\varphi :\mathfrak{T}\left( X\right) \rightarrow B$ and an
epimorphism $\alpha :A\rightarrow B$ then there exists a homomorphism $\chi :%
\mathfrak{T}\left( X\right) \rightarrow A$, such that $\alpha \chi =\varphi $%
.
\end{proposition}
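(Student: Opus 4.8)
The plan is to reduce everything to the universal property of the term algebra recorded in Proposition \ref{free_empty}, so that the only genuine work is to produce a correct set map on the alphabet $X$ and then to verify that the two resulting homomorphisms into $B$ coincide.

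First I would define a set-theoretic lifting $\chi^{\ast}:X\rightarrow A$ of $\varphi_{\mid X}$ along $\alpha$. For each $i\in\Gamma$ and each $x\in X^{(i)}$, the element $\varphi(x)$ lies in $B^{(i)}$, since $\varphi$ is a homomorphism and hence respects sorts by (\ref{hom_1}). Because $\alpha$ is an epimorphism, there is some $a\in A$ with $\alpha(a)=\varphi(x)$; and as $\alpha$ also respects sorts, $\eta_{A}(a)=\eta_{B}(\alpha(a))=\eta_{B}(\varphi(x))=i$, so automatically $a\in A^{(i)}$. Choosing one such preimage for every $x$ (invoking the axiom of choice where the fibres are large) defines $\chi^{\ast}(x)=a_{x}\in A^{(i)}$, and by construction both $\eta_{X}=\eta_{A}\chi^{\ast}$ and $\alpha\chi^{\ast}=\varphi_{\mid X}$ hold.

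Before applying Proposition \ref{free_empty} I must check its hypothesis $\Gamma_{X}\subseteq\Gamma_{A}$. This is precisely where existence of preimages is used at the level of sorts: if $i\in\Gamma_{X}$ then $X^{(i)}\neq\varnothing$, so $B^{(i)}\neq\varnothing$ as above, and a preimage under the surjection $\alpha$ then shows $A^{(i)}\neq\varnothing$, i.e. $i\in\Gamma_{A}$. With this verified, Proposition \ref{free_empty} applied to the algebra $A$ and the map $\chi^{\ast}$ yields a unique homomorphism $\chi:\mathfrak{T}\left(X\right)\rightarrow A$ with $\chi_{\mid X}=\chi^{\ast}$.

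It remains to show $\alpha\chi=\varphi$. Both $\alpha\chi$ and $\varphi$ are homomorphisms from $\mathfrak{T}\left(X\right)$ to $B$, and on the generators they agree, since $(\alpha\chi)(x)=\alpha(\chi^{\ast}(x))=\alpha(a_{x})=\varphi(x)$ for every $x\in X$. Hence both restrict to $\varphi_{\mid X}$ on $X$, and the uniqueness clause of Proposition \ref{free_empty} (now applied to the algebra $B$ and the map $\varphi_{\mid X}$, whose hypothesis $\Gamma_{X}\subseteq\Gamma_{B}$ holds because $\varphi$ is a homomorphism) forces $\alpha\chi=\varphi$. I do not expect a genuine obstacle here; the only points demanding care are the sort-bookkeeping — confirming that the chosen preimages land in the correct sort and that $\Gamma_{X}\subseteq\Gamma_{A}$ — together with the tacit appeal to choice in the construction of $\chi^{\ast}$.
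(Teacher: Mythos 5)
Your proof is correct and is exactly the argument the paper intends: the paper gives no explicit proof, stating only that the projective property follows easily from Proposition \ref{free_empty}, and your lift of the generators along the epimorphism followed by the existence and uniqueness clauses of that proposition is the standard way to carry this out. The sort-bookkeeping and the verification of $\Gamma_{X}\subseteq\Gamma_{A}$ are handled correctly.
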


We suppose that $X=\left\{ x_{1},\ldots ,x_{r}\right\} $ and $f_{1}\left(
x_{1},\ldots ,x_{r}\right) =f_{1}$, $f_{2}\left( x_{1},\ldots ,x_{r}\right)
=f_{2}\in \left( \mathfrak{T}\left( X\right) \right) ^{(i)}$ for certain $%
i\in \Gamma _{\mathfrak{T}\left( X\right) }$. We say that \textit{identity} $%
f_{1}=f_{2}$ holds in algebra $H$ with the set of names of sorts $\Gamma $
and the signature $\Omega $ or algebra $H$ satisfies the identity $%
f_{1}=f_{2}$ if for every $h_{1},\ldots ,h_{r}\in H$ such that $\eta
_{X}\left( x_{j}\right) =\eta _{H}\left( h_{j}\right) $, $1\leq j\leq r$,
the equality $f_{1}\left( h_{1},\ldots ,h_{r}\right) =f_{2}\left(
h_{1},\ldots ,h_{r}\right) $ holds, or, in other words, if for every $%
\varphi \in \mathrm{Hom}\left( \mathfrak{T}\left( X\right) ,H\right) $ the
equality $\varphi \left( f_{1}\right) =\varphi \left( f_{2}\right) $ holds.
We denote this fact by $H\models \left( f_{1}=f_{2}\right) $. In particular $%
H\models \left( f_{1}=f_{2}\right) $ if $\mathrm{Hom}\left( \mathfrak{T}%
\left( X\right) ,H\right) =\varnothing $. Sometimes we will denote
identities briefly: without brackets.

We consider a set of identities $I$. This set can by infinite. But for every 
$\left( f_{1}=f_{2}\right) \in I$ exists finite $X\subset X_{0}$ such that $%
f_{1},f_{2}\in \mathfrak{T}\left( X\right) $. We say that an algebra $H$
satisfies the set of identities $I$ if for every $\left( f_{1}=f_{2}\right)
\in I$ the $H\models \left( f_{1}=f_{2}\right) $ fulfills. This fact we
denote by $H\models I$. The class of all algebras $H$ with set of names of
sorts $\Gamma $ and signature $\Omega $ such that $H\models I$ we call a 
\textit{variety defined by the set of identities} $I$ and denote by $%
Var\left( I\right) $. Therefore, if we consider some variety of algebras, we
suppose that set of names of sorts $\Gamma $ and signature $\Omega $ are
known.

On the other hand we can consider a class of algebras $\Theta $ and the set
of all identities $\mathfrak{I}\left( \Theta \right) $ which hold in all
algebras of $\Theta $. The subset of all identities of $\mathfrak{I}\left(
\Theta \right) $ which contain only variables from a subset $X\subset X_{0}$
we denote by $\mathfrak{I}\left( \Theta ,X\right) $. If we consider the
identity $\left( f_{1}=f_{2}\right) $ as a pair $\left( f_{1},f_{2}\right) $
then for every $X\subset X_{0}$ we have that $\mathfrak{I}\left( \Theta
,X\right) $ is a congruence in $\mathfrak{T}\left( X\right) $.

The proof of the following facts is an easy exercise in logic rather than in
algebra.

\begin{claim}
\label{cl}If $\Lambda ,\Lambda _{1},\Lambda _{2}$ some classes of algebras
with set of names of sorts $\Gamma $ and signature $\Omega $ and $%
I,I_{1},I_{2}$ some sets of identities, i.e.,\linebreak $I,I_{1},I_{2}%
\subseteq \biguplus\limits_{i\in \Gamma }\left( \mathfrak{T}\left( \Gamma
,\Omega ,X\right) ^{(i)}\times \mathfrak{T}\left( \Gamma ,\Omega ,X\right)
^{(i)}\right) $, then
\end{claim}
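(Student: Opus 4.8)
The plan is to read this claim as a list of the standard order-theoretic facts governing the pair of operators $I\mapsto Var\left( I\right) $ and $\Theta \mapsto \mathfrak{I}\left( \Theta \right) $, which together form an antitone Galois connection between the inclusion-ordered collection of classes of algebras and the inclusion-ordered collection of sets of identities. Every individual assertion the claim is likely to contain --- the two order-reversing monotonicity statements, the two ``expansion'' statements $\Lambda \subseteq Var\left( \mathfrak{I}\left( \Lambda \right) \right) $ and $I\subseteq \mathfrak{I}\left( Var\left( I\right) \right) $, and the idempotence/closure statements such as $Var\left( \mathfrak{I}\left( Var\left( I\right) \right) \right) =Var\left( I\right) $ --- will be obtained by unwinding the definition of $H\models \left( f_{1}=f_{2}\right) $ together with the definitions of $Var$ and $\mathfrak{I}$, with no appeal to the signature $\Omega $ beyond the bare meaning of ``$H$ satisfies the identity''.

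First I would establish the two monotonicity statements directly. If $\Lambda _{1}\subseteq \Lambda _{2}$, then an identity holding in every member of $\Lambda _{2}$ certainly holds in every member of $\Lambda _{1}$, so $\mathfrak{I}\left( \Lambda _{2}\right) \subseteq \mathfrak{I}\left( \Lambda _{1}\right) $; dually, if $I_{1}\subseteq I_{2}$ then any algebra satisfying all of $I_{2}$ satisfies all of $I_{1}$, so $Var\left( I_{2}\right) \subseteq Var\left( I_{1}\right) $. Next the two expansion statements: by the very definition of $\mathfrak{I}\left( \Lambda \right) $ as the set of identities valid throughout $\Lambda $, each $H\in \Lambda $ satisfies every member of $\mathfrak{I}\left( \Lambda \right) $, whence $H\in Var\left( \mathfrak{I}\left( \Lambda \right) \right) $; and by the definition of $Var\left( I\right) $ every identity of $I$ holds in every algebra of $Var\left( I\right) $, so $I\subseteq \mathfrak{I}\left( Var\left( I\right) \right) $.

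The closure (idempotence) statements then follow formally from these four by the usual Galois-connection calculation: applying $Var$ to $I\subseteq \mathfrak{I}\left( Var\left( I\right) \right) $ and using antitonicity gives $Var\left( \mathfrak{I}\left( Var\left( I\right) \right) \right) \subseteq Var\left( I\right) $, while the expansion statement applied to the class $Var\left( I\right) $ gives the reverse inclusion; the same pattern handles $\mathfrak{I}\left( Var\left( \mathfrak{I}\left( \Lambda \right) \right) \right) =\mathfrak{I}\left( \Lambda \right) $. Any distributivity-type clauses relating unions of classes, or unions of identity sets, to intersections under the operators are proved in the same elementary manner straight from the definitions.

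I expect no genuine obstacle --- the author rightly calls this ``an easy exercise in logic rather than in algebra.'' The only point requiring a moment's care, and the one place the many-sorted setting could bite, is the convention that $H\models \left( f_{1}=f_{2}\right) $ holds \emph{vacuously} when $\mathrm{Hom}\left( \mathfrak{T}\left( X\right) ,H\right) =\varnothing $ (equivalently when $\Gamma _{X}\nsubseteq \Gamma _{H}$). I would keep this convention in view so that the quantifier manipulations above remain correct for algebras that omit some sorts; since the definition already folds the empty-$\mathrm{Hom}$ case into ``$H$ satisfies the identity,'' the vacuous instances never obstruct any of the claimed inclusions.
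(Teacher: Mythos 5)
The paper offers no proof of this claim at all --- it is dismissed with the remark that it is ``an easy exercise in logic rather than in algebra'' --- so there is no argument of the author's to compare yours against. Your Galois-connection framing is the right one and correctly disposes of items 1--6 of the claim: the two antitone monotonicity statements, the two expansion statements $\Lambda \subseteq Var\left( \mathfrak{I}\left( \Lambda \right) \right) $ and $I\subseteq \mathfrak{I}\left( Var\left( I\right) \right) $, and the two idempotence identities that follow formally from them. Your attention to the vacuous-satisfaction convention when $\mathrm{Hom}\left( \mathfrak{T}\left( X\right) ,H\right) =\varnothing $ is also well placed in the many-sorted setting.

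The gap is that the claim contains two further items that are not instances of the Galois-connection formalism, and your catch-all sentence about ``distributivity-type clauses'' does not actually supply proofs of them. Item \ref{c_7} (substitution invariance: $\left( f_{1}=f_{2}\right) \in \mathfrak{I}\left( \Lambda ,X\right) $ and $\varphi \in \mathrm{Hom}\left( \mathfrak{T}\left( X\right) ,\mathfrak{T}\left( Y\right) \right) $ imply $\left( \varphi \left( f_{1}\right) =\varphi \left( f_{2}\right) \right) \in \mathfrak{I}\left( \Lambda ,Y\right) $) is easy but needs the explicit precomposition argument: for any $H\in \Lambda $ and $\psi \in \mathrm{Hom}\left( \mathfrak{T}\left( Y\right) ,H\right) $ one applies the defining property of $\mathfrak{I}\left( \Lambda ,X\right) $ to $\psi \varphi $. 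Item \ref{c_8} is the genuinely nontrivial one: the inclusion $\left\langle \mathfrak{I}\left( \Theta _{1},X\right) ,\mathfrak{I}\left( \Theta _{2},X\right) \right\rangle \subseteq \mathfrak{I}\left( \Theta _{1}\cap \Theta _{2},X\right) $ is immediate since the right-hand side is a congruence containing both generators, but the reverse inclusion requires showing that the quotient $\mathfrak{T}\left( X\right) /\left\langle \mathfrak{I}\left( \Theta _{1},X\right) ,\mathfrak{I}\left( \Theta _{2},X\right) \right\rangle $ actually lies in $\Theta _{1}\cap \Theta _{2}$; this uses that the generated congruence is fully invariant (which itself rests on item \ref{c_7}) together with the projective property of $\mathfrak{T}\left( X\right) $ from Proposition \ref{term_proj} to lift homomorphisms into the quotient. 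None of this is hard, but it is a separate argument from the order-theoretic calculus you describe, and a complete write-up would have to include it.
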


\begin{enumerate}
\item \label{c_1}$I_{1}\subseteq I_{2}\Longrightarrow Var\left( I_{1}\right)
\supseteq Var\left( I_{2}\right) $\textit{,}

\item \label{c_2}$\Lambda _{1}\subseteq \Lambda _{2}\Longrightarrow 
\mathfrak{I}\left( \Lambda _{1}\right) \supseteq \mathfrak{I}\left( \Lambda
_{2}\right) $\textit{,}

\item $Var\left( \Lambda \right) =Var\left( \mathfrak{I}\left( \Lambda
\right) \right) \supseteq \Lambda $\textit{,}

\item $\mathfrak{I}\left( Var\left( I\right) \right) \supseteq I$\textit{,}

\item $\mathfrak{I}\left( Var\left( \mathfrak{I}\left( \Lambda \right)
\right) \right) =\mathfrak{I}\left( \Lambda \right) $\textit{,}

\item \label{c_6}$Var\left( \mathfrak{I}\left( Var\left( I\right) \right)
\right) =Var\left( I\right) $\textit{, in particular, if }$\Theta $\textit{\
is a variety of algebras, then }$Var\left( \mathfrak{I}\left( \Theta \right)
\right) =\Theta $\textit{,}

\item \label{c_7}\textit{if }$\left( f_{1}=f_{2}\right) \in \mathfrak{I}%
\left( \Lambda ,X\right) $ and $\varphi \in \mathrm{Hom}\left( \mathfrak{T}%
\left( X\right) ,\mathfrak{T}\left( Y\right) \right) $, then $\left( \varphi
\left( f_{1}\right) =\varphi \left( f_{2}\right) \right) \in \mathfrak{I}%
\left( \Lambda ,Y\right) $,

\item \label{c_8}\textit{if }$\Theta _{1},\Theta _{2}$\textit{\ are
varieties of algebras with same set of names of sorts and same signature
then }$\Theta _{1}\cap \Theta _{2}$\textit{\ is also a variety and for every 
}$X\subset X_{0}$\textit{\ the equality }$\mathfrak{I}\left( \Theta _{1}\cap
\Theta _{2},X\right) =\left\langle \mathfrak{I}\left( \Theta _{1},X\right) ,%
\mathfrak{I}\left( \Theta _{2},X\right) \right\rangle $\textit{\ holds, where%
}\linebreak $\left\langle \mathfrak{I}\left( \Theta _{1},X\right) ,\mathfrak{%
I}\left( \Theta _{2},X\right) \right\rangle $\textit{\ is a congruence
generated by congruences }$\mathfrak{I}\left( \Theta _{1},X\right) $\textit{%
\ and }$\mathfrak{I}\left( \Theta _{2},X\right) $\textit{, i.e., the minimal
congruence which contains }$\mathfrak{I}\left( \Theta _{1},X\right) $\textit{%
\ and }$\mathfrak{I}\left( \Theta _{2},X\right) $\textit{.}
\end{enumerate}

\begin{definition}
We say that algebra $F_{\Theta }\left( Y\right) $, is a \textbf{free algebra}
of some variety $\Theta $, generated by the set of free generators $Y$ if
\end{definition}

\begin{enumerate}
\item $Y\subseteq F_{\Theta }\left( Y\right) $\textit{,}

\item $F_{\Theta }\left( Y\right) \in \Theta $\textit{,}

\item \textit{for every }$H\in \Theta $\textit{\ such that }$\Gamma
_{H}\supseteq \Gamma _{Y}$\textit{\ and for every mapping }$\varphi ^{\ast
}:Y\rightarrow H$\textit{\ such that }$\eta _{Y}=\eta _{H}\varphi ^{\ast }$%
\textit{, there exists unique homomorphism }$\varphi :F_{\Theta }\left(
Y\right) \rightarrow H$\textit{\ such that }$\varphi _{\mid Y}=\varphi
^{\ast }$\textit{.}
\end{enumerate}

By Proposition \ref{free_empty} we have that algebra $\mathfrak{T}\left(
X\right) $ is a free algebra of the variety, which defined by empty set of
identities. This free algebra is generated by the set of free generators $X$.

The natural epimorphism $\mathfrak{T}\left( X\right) \rightarrow \mathfrak{T}%
\left( X\right) /\mathfrak{I}\left( \Theta ,X\right) $ we denote by $\delta
_{\Theta ,X}$. From Proposition \ref{free_empty} and from definition of $%
\mathfrak{I}\left( \Theta ,X\right) $ we can conclude that%
\begin{equation}
\mathfrak{T}\left( X\right) /\mathfrak{I}\left( \Theta ,X\right) \cong
F_{\Theta }\left( Y\right) ,  \label{free_alg_var}
\end{equation}%
where $Y=\delta _{\Theta ,X}\left( X\right) $.

\begin{definition}
We say that a variety $\Theta $ is $i$\textbf{-degenerate}, where $i\in
\Gamma $, if $\left( x_{1}^{\left( i\right) }=x_{2}^{\left( i\right)
}\right) \in \mathfrak{I}\left( \Theta \right) $. If for every $i\in \Gamma $
the inclusion $\left( x_{1}^{\left( i\right) }=x_{2}^{\left( i\right)
}\right) \in \mathfrak{I}\left( \Theta \right) $ holds, then we call the
variety $\Theta $ \textbf{degenerate}. We say that a variety $\Theta $ is $i$%
\textbf{-nondegenerate}, where $i\in \Gamma $, if $\left( x_{1}^{\left(
i\right) }=x_{2}^{\left( i\right) }\right) \notin \mathfrak{I}\left( \Theta
\right) $. A variety $\Theta $ is called \textbf{nondegenerate} if it is $i$%
-nondegenerate for every $i\in \Gamma $.
\end{definition}

\begin{proposition}
\label{les2}If a variety $\Theta $ is $i$-degenerate, then for every $H\in
\Theta $ the inclusion $\left\vert H^{\left( i\right) }\right\vert \in
\left\{ 0,1\right\} $ holds.
\end{proposition}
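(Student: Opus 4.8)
The plan is to read off the bound on $\left\vert H^{\left( i\right) }\right\vert $ directly from the meaning of the identity $x_{1}^{\left( i\right) }=x_{2}^{\left( i\right) }$ holding in $H$, invoking the universal property of the algebra of terms (Proposition \ref{free_empty}). Since $\Theta $ is $i$-degenerate we have $\left( x_{1}^{\left( i\right) }=x_{2}^{\left( i\right) }\right) \in \mathfrak{I}\left( \Theta \right) $, and because $H\in \Theta $ this yields $H\models \left( x_{1}^{\left( i\right) }=x_{2}^{\left( i\right) }\right) $. The relevant alphabet here is $X=\left\{ x_{1}^{\left( i\right) },x_{2}^{\left( i\right) }\right\} $, so that $\Gamma _{X}=\left\{ i\right\} $, and the two terms of the identity are $f_{1}=x_{1}^{\left( i\right) }$ and $f_{2}=x_{2}^{\left( i\right) }$.

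I would split the argument according to whether the sort $i$ is realized in $H$. First, if $H^{\left( i\right) }=\varnothing $, then $\left\vert H^{\left( i\right) }\right\vert =0\in \left\{ 0,1\right\} $ and there is nothing more to prove; note this is consistent with the convention that $H\models \left( f_{1}=f_{2}\right) $ holds vacuously when $\mathrm{Hom}\left( \mathfrak{T}\left( X\right) ,H\right) =\varnothing $, which is precisely the situation $\Gamma _{X}\nsubseteq \Gamma _{H}$.

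In the remaining case $H^{\left( i\right) }\neq \varnothing $, I would show that any two elements of $H^{\left( i\right) }$ coincide. Take arbitrary $h_{1},h_{2}\in H^{\left( i\right) }$ and define $\varphi ^{\ast }:X\rightarrow H$ by $\varphi ^{\ast }\left( x_{k}^{\left( i\right) }\right) =h_{k}$ for $k=1,2$; since both variables and both images have sort $i$, the compatibility condition $\eta _{X}=\eta _{H}\varphi ^{\ast }$ is satisfied, and $\Gamma _{X}=\left\{ i\right\} \subseteq \Gamma _{H}$. By Proposition \ref{free_empty} there is a (unique) homomorphism $\varphi :\mathfrak{T}\left( X\right) \rightarrow H$ extending $\varphi ^{\ast }$, so $\varphi \in \mathrm{Hom}\left( \mathfrak{T}\left( X\right) ,H\right) $. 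Because $H\models \left( x_{1}^{\left( i\right) }=x_{2}^{\left( i\right) }\right) $, we obtain $\varphi \left( x_{1}^{\left( i\right) }\right) =\varphi \left( x_{2}^{\left( i\right) }\right) $, that is $h_{1}=h_{2}$. Hence $H^{\left( i\right) }$ has at most one element, giving $\left\vert H^{\left( i\right) }\right\vert =1\in \left\{ 0,1\right\} $.

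There is essentially no hard step here: the only thing to be careful about is the reading of the phrase ``the identity holds'', and in particular making sure the empty case $H^{\left( i\right) }=\varnothing $ is covered by the vacuous convention rather than by the substitution argument, which requires a nonempty $H^{\left( i\right) }$ in order to produce the map $\varphi ^{\ast }$. Everything else is an immediate application of the universal property of $\mathfrak{T}\left( X\right) $.
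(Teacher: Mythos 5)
Your proof is correct and follows essentially the same route as the paper: both use Proposition \ref{free_empty} to extend the assignment $x_{k}^{\left( i\right) }\mapsto h_{k}$ to a homomorphism $\varphi :\mathfrak{T}\left( x_{1}^{\left( i\right) },x_{2}^{\left( i\right) }\right) \rightarrow H$ and then apply the identity $x_{1}^{\left( i\right) }=x_{2}^{\left( i\right) }$ to force $h_{1}=h_{2}$. The only cosmetic difference is that the paper argues by contradiction from $\left\vert H^{\left( i\right) }\right\vert >1$ while you argue directly and handle the empty case explicitly; the substance is identical.
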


\begin{proof}
We suppose that $\left\vert H^{\left( i\right) }\right\vert >1$. It means
that there are $h_{1}^{\left( i\right) },h_{2}^{\left( i\right) }\in
\left\vert H^{\left( i\right) }\right\vert $. We consider the algebra $%
\mathfrak{T}\left( x_{1}^{\left( i\right) },x_{2}^{\left( i\right) }\right) $
and the mapping $\varphi ^{\ast }:\left\{ x_{1}^{\left( i\right)
},x_{2}^{\left( i\right) }\right\} \rightarrow H$, such that $\varphi ^{\ast
}\left( x_{j}^{\left( i\right) }\right) =h_{j}^{\left( i\right) }$, $j=1,2$.
By Proposition \ref{free_empty} there exists a homomorphism $\varphi :%
\mathfrak{T}\left( x_{1}^{\left( i\right) },x_{2}^{\left( i\right) }\right)
\rightarrow H$, such that $\varphi _{\mid \left\{ x_{1}^{\left( i\right)
},x_{2}^{\left( i\right) }\right\} }=\varphi ^{\ast }$. $\left(
x_{1}^{\left( i\right) }=x_{2}^{\left( i\right) }\right) \in \mathfrak{I}%
\left( \Theta \right) $, so $\varphi \left( x_{1}^{\left( i\right) }\right)
=\varphi \left( x_{2}^{\left( i\right) }\right) $. It means that $%
h_{1}^{\left( i\right) }=h_{2}^{\left( i\right) }$ and gives a contradiction
with $\left\vert H^{\left( i\right) }\right\vert >1$.
\end{proof}

\begin{proposition}
\label{X_injection}Let $X\subset X_{0}$. A variety $\Theta $ is $i$%
-nondegenerate, if and only if the natural epimorphism $\delta _{\Theta ,X}:%
\mathfrak{T}\left( X\right) \rightarrow \mathfrak{T}\left( X\right) /%
\mathfrak{I}\left( \Theta ,X\right) =F_{\Theta }\left( \delta _{\Theta
,X}\left( X\right) \right) $ is an injection on $X^{(i)}$.
\end{proposition}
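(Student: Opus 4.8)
The plan is to reduce both conditions to membership statements in the congruence $\mathfrak{I}(\Theta,X)$ in $\mathfrak{T}(X)$ and then to move between them by renaming variables, for which item \ref{c_7} of Claim \ref{cl} is precisely the tool. Because $\delta_{\Theta,X}$ is the natural epimorphism of the congruence $\mathfrak{I}(\Theta,X)$, for $a,b\in\mathfrak{T}(X)$ of the same sort we have $\delta_{\Theta,X}(a)=\delta_{\Theta,X}(b)$ if and only if $(a,b)\in\mathfrak{I}(\Theta,X)$. Hence $\delta_{\Theta,X}$ fails to be injective on $X^{(i)}$ exactly when there are two distinct generators $x,x'\in X^{(i)}$ with $(x=x')\in\mathfrak{I}(\Theta,X)$. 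On the other hand, the identity $x_{1}^{(i)}=x_{2}^{(i)}$ involves only the two sort-$i$ variables $x_{1}^{(i)},x_{2}^{(i)}$, so $\Theta$ is $i$-degenerate if and only if $(x_{1}^{(i)}=x_{2}^{(i)})\in\mathfrak{I}(\Theta,\{x_{1}^{(i)},x_{2}^{(i)}\})$. I will therefore prove the contrapositive equivalence, that $\delta_{\Theta,X}$ is non-injective on $X^{(i)}$ if and only if $\Theta$ is $i$-degenerate; I assume $|X^{(i)}|\geq 2$, which is the relevant case and is what gives the implication ``injective $\Rightarrow$ $i$-nondegenerate'' content.

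For the implication ``$i$-degenerate $\Rightarrow$ non-injective'', I would pick two distinct generators $x,x'\in X^{(i)}$ and construct, via Proposition \ref{free_empty}, a homomorphism $\psi:\mathfrak{T}(\{x_{1}^{(i)},x_{2}^{(i)}\})\rightarrow\mathfrak{T}(X)$ with $\psi(x_{1}^{(i)})=x$ and $\psi(x_{2}^{(i)})=x'$. This is admissible because both source variables have sort $i$ and $i\in\Gamma_{X}\subseteq\Gamma_{\mathfrak{T}(X)}$, so the sort hypothesis of Proposition \ref{free_empty} is met. Applying item \ref{c_7} of Claim \ref{cl} to $(x_{1}^{(i)}=x_{2}^{(i)})\in\mathfrak{I}(\Theta,\{x_{1}^{(i)},x_{2}^{(i)}\})$ then gives $(x=x')\in\mathfrak{I}(\Theta,X)$, so $\delta_{\Theta,X}(x)=\delta_{\Theta,X}(x')$ with $x\neq x'$.

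For the reverse implication I would start from distinct $x,x'\in X^{(i)}$ with $(x=x')\in\mathfrak{I}(\Theta,X)$ and rename in the opposite direction. Here the one delicate point is that a homomorphism out of $\mathfrak{T}(X)$ must be specified on every generator of $X$, so I cannot map into $\mathfrak{T}(\{x_{1}^{(i)},x_{2}^{(i)}\})$ directly, as that term algebra may lack terms of the other sorts occurring in $X$. I would instead set $Y=X\cup\{x_{1}^{(i)},x_{2}^{(i)}\}\subseteq X_{0}$, define $\varphi^{\ast}:X\rightarrow\mathfrak{T}(Y)$ by $x\mapsto x_{1}^{(i)}$, $x'\mapsto x_{2}^{(i)}$ and $z\mapsto z$ for all other generators $z$, extend it to $\varphi:\mathfrak{T}(X)\rightarrow\mathfrak{T}(Y)$ by Proposition \ref{free_empty}, and apply item \ref{c_7} of Claim \ref{cl} to obtain $(x_{1}^{(i)}=x_{2}^{(i)})\in\mathfrak{I}(\Theta,Y)\subseteq\mathfrak{I}(\Theta)$, i.e. that $\Theta$ is $i$-degenerate.

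The step I expect to be the main obstacle is exactly this sort-bookkeeping: guaranteeing the existence of the two inter-term-algebra homomorphisms while respecting the $\Gamma$-conditions of Proposition \ref{free_empty}, and recognizing the asymmetry that renaming the variables of an identity is harmless whereas decreasing their number is not. Routing both directions through item \ref{c_7} of Claim \ref{cl}, rather than through substitutions into individual algebras $H\in\Theta$ (where empty sorts would force awkward vacuous-truth case distinctions), is what keeps the argument clean.
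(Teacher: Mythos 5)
Your proof is correct. One direction coincides with the paper's argument in substance: to show that non-injectivity on $X^{(i)}$ forces $i$-degeneracy, the paper also renames the two offending generators to $x_{1}^{(i)},x_{2}^{(i)}$ and invokes item \ref{c_7} of Claim \ref{cl}; the paper does this via ``an endomorphism of $\mathfrak{T}\left( X\right) $'', which tacitly requires $x_{1}^{(i)},x_{2}^{(i)}\in X$, whereas your enlargement to $Y=X\cup \left\{ x_{1}^{(i)},x_{2}^{(i)}\right\} $ handles the general case cleanly --- a genuine (if small) improvement in rigor. The other direction is where you diverge: to get non-injectivity from $i$-degeneracy the paper argues semantically, applying Proposition \ref{les2} to the free algebra $F_{\Theta }\left( \delta _{\Theta ,X}\left( X\right) \right) \in \Theta $ to conclude it has at most one element of sort $i$, while you stay entirely syntactic, pushing the identity $x_{1}^{(i)}=x_{2}^{(i)}$ forward along a renaming homomorphism into $\mathfrak{T}\left( X\right) $ via item \ref{c_7}. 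Both are valid; your version keeps the whole proof inside the term algebras and item \ref{c_7}, the paper's version is a one-line corollary of a proposition it has already established. Your explicit caveat that $\left\vert X^{(i)}\right\vert \geq 2$ is needed for the ``injective $\Rightarrow $ $i$-nondegenerate'' implication is well taken: the paper leaves this hypothesis implicit, and the statement as literally written is vacuous in that direction when $X^{(i)}$ has fewer than two elements.
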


\begin{proof}
We suppose that the variety $\Theta $ is $i$-nondegenerate, i.e., $\left(
x_{1}^{\left( i\right) }=x_{2}^{\left( i\right) }\right) \notin \mathfrak{I}%
\left( \Theta \right) $, but there are $x_{j_{1}}^{\left( i\right)
},x_{j_{2}}^{\left( i\right) }\in X^{(i)}$ such that $x_{j_{1}}^{\left(
i\right) }\neq x_{j_{2}}^{\left( i\right) }$ and $\delta _{\Theta ,X}\left(
x_{j_{1}}^{\left( i\right) }\right) =\delta _{\Theta ,X}\left(
x_{j_{2}}^{\left( i\right) }\right) $. It means that $\left(
x_{j_{1}}^{\left( i\right) }=x_{j_{2}}^{\left( i\right) }\right) \in 
\mathfrak{I}\left( \Theta ,X\right) $. There exists an endomorphism $\alpha $
of $\mathfrak{T}\left( X\right) $ such that $\alpha \left( x_{j_{1}}^{\left(
i\right) }\right) =x_{1}^{\left( i\right) }$ and $\alpha \left(
x_{j_{2}}^{\left( i\right) }\right) =x_{2}^{\left( i\right) }$. Hence $%
\left( x_{1}^{\left( i\right) }=x_{2}^{\left( i\right) }\right) \in 
\mathfrak{I}\left( \Theta \right) $.

We suppose that the variety $\Theta $ is $i$-degenerate. $F_{\Theta }\left(
\delta _{\Theta ,X}\left( X\right) \right) \in \Theta $, so, by Proposition %
\ref{les2}, $\left\vert \left( F_{\Theta }\left( \delta _{\Theta ,X}\left(
X\right) \right) \right) ^{\left( i\right) }\right\vert <2$. Hence, for
every $x_{j_{1}}^{\left( i\right) },x_{j_{2}}^{\left( i\right) }\in X^{(i)}$
the equality $\delta _{\Theta ,X}\left( x_{j_{1}}^{\left( i\right) }\right)
=\delta _{\Theta ,X}\left( x_{j_{2}}^{\left( i\right) }\right) $ holds.
\end{proof}

We can prove from definition of a free algebra of variety, that if $%
F_{\Theta }\left( Y\right) ,F_{\Theta }\left( Z\right) $ are free algebras
of the variety $\Theta $ with the sets of free generators $%
Y=\biguplus\limits_{i\in \Gamma }Y^{\left( i\right) }$, $Z=\biguplus%
\limits_{i\in \Gamma }Z^{\left( i\right) }$ correspondingly, and for every $%
i\in \Gamma $ the equality $\left\vert Y^{\left( i\right) }\right\vert
=\left\vert Z^{\left( i\right) }\right\vert $ holds, then $F_{\Theta }\left(
Y\right) \cong F_{\Theta }\left( Z\right) $.

\begin{proposition}
\label{up_to_isomorphism}Up to isomorphism all free algebras of a variety $%
\Theta $ have a form (\ref{free_alg_var}).
\end{proposition}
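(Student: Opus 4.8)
The plan is to start from an arbitrary free algebra $F_{\Theta }\left(
Z\right) $ of $\Theta $ with set of free generators $Z=\biguplus\limits_{i\in
\Gamma }Z^{\left( i\right) }$ and to produce a subset $X\subset X_{0}$ for
which the algebra $\mathfrak{T}\left( X\right) /\mathfrak{I}\left( \Theta
,X\right) $ of the form (\ref{free_alg_var}) is isomorphic to $F_{\Theta
}\left( Z\right) $. By (\ref{free_alg_var}) the algebra $\mathfrak{T}\left(
X\right) /\mathfrak{I}\left( \Theta ,X\right) $ is a free algebra of $\Theta $
with set of free generators $Y=\delta _{\Theta ,X}\left( X\right) $. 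Hence,
by the isomorphism criterion stated just before this proposition (two free
algebras of $\Theta $ whose generating sets have, sort by sort, equal
cardinalities are isomorphic), it suffices to choose $X$ so that $\left\vert
Y^{\left( i\right) }\right\vert =\left\vert Z^{\left( i\right) }\right\vert $
for every $i\in \Gamma $.

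To build $X=\biguplus\limits_{i\in \Gamma }X^{\left( i\right) }$ I treat the
nondegenerate and the degenerate sorts separately. If $\Theta $ is $i$
-nondegenerate, then by Proposition \ref{X_injection} the map $\delta
_{\Theta ,X}$ is injective on $X^{\left( i\right) }$, so $\left\vert
Y^{\left( i\right) }\right\vert =\left\vert X^{\left( i\right) }\right\vert $
; since $X_{0}^{\left( i\right) }$ is countable, I can pick $X^{\left(
i\right) }\subseteq X_{0}^{\left( i\right) }$ with $\left\vert X^{\left(
i\right) }\right\vert =\left\vert Z^{\left( i\right) }\right\vert $, this
being possible exactly when $\left\vert Z^{\left( i\right) }\right\vert \leq
\infty $, which holds for the free algebras under consideration.

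If $\Theta $ is $i$-degenerate, then the cardinalities are constrained on
both sides. Since $F_{\Theta }\left( Z\right) \in \Theta $ and $Z^{\left(
i\right) }\subseteq \left( F_{\Theta }\left( Z\right) \right) ^{\left(
i\right) }$, Proposition \ref{les2} gives $\left\vert Z^{\left( i\right)
}\right\vert \in \left\{ 0,1\right\} $; likewise, as $\mathfrak{T}\left(
X\right) /\mathfrak{I}\left( \Theta ,X\right) \in \Theta $, Proposition \ref
{les2} forces $\left\vert Y^{\left( i\right) }\right\vert \in \left\{
0,1\right\} $. Therefore I take $X^{\left( i\right) }=\varnothing $ when $
\left\vert Z^{\left( i\right) }\right\vert =0$ and $X^{\left( i\right) }$ a
singleton when $\left\vert Z^{\left( i\right) }\right\vert =1$; in either
case $\left\vert Y^{\left( i\right) }\right\vert =\left\vert Z^{\left(
i\right) }\right\vert $, the value $1$ being attained because the image of a
nonempty set under $\delta _{\Theta ,X}$ is nonempty.

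Assembling these choices, the resulting $X$ satisfies $\left\vert Y^{\left(
i\right) }\right\vert =\left\vert Z^{\left( i\right) }\right\vert $ for all $
i\in \Gamma $, so the quoted isomorphism criterion yields $F_{\Theta }\left(
Z\right) \cong F_{\Theta }\left( Y\right) \cong \mathfrak{T}\left( X\right) /
\mathfrak{I}\left( \Theta ,X\right) $, which is of the form (\ref
{free_alg_var}). The only delicate point is the bookkeeping at degenerate
sorts, where one must observe that Proposition \ref{les2} collapses the
relevant cardinalities to $\left\{ 0,1\right\} $ on both sides so that they
match automatically once $X^{\left( i\right) }$ is chosen empty or a
singleton; for nondegenerate sorts the only restriction is the bound $
\left\vert Z^{\left( i\right) }\right\vert \leq \infty $ imposed by the
countability of $X_{0}^{\left( i\right) }$.
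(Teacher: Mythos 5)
Your proof is correct and follows essentially the same route as the paper: choose $X\subset X_{0}$ with $\left\vert X^{\left( i\right) }\right\vert =\left\vert Z^{\left( i\right) }\right\vert $, split into the $i$-nondegenerate case (handled by Proposition \ref{X_injection}) and the $i$-degenerate case (handled by Proposition \ref{les2}), and conclude with the criterion that free algebras with sort-by-sort equinumerous generating sets are isomorphic. Your explicit remark about the countability bound $\left\vert Z^{\left( i\right) }\right\vert \leq \infty $ imposed by $X_{0}^{\left( i\right) }$ is a minor extra care the paper leaves implicit, but it does not change the argument.
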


\begin{proof}
Let $F_{\Theta }\left( Z\right) $ is a free algebra of the variety $\Theta $%
, where $Z=\biguplus\limits_{i\in \Gamma }Z^{\left( i\right) }$. We take the
set $X=\biguplus\limits_{i\in \Gamma }X^{\left( i\right) }$, such that for
every $i\in \Gamma $ the equality $\left\vert X^{\left( i\right)
}\right\vert =\left\vert Z^{\left( i\right) }\right\vert $ holds. We
consider the algebra of terms $\mathfrak{T}\left( X\right) $ and the free
algebra $F_{\Theta }\left( Y\right) $ of the variety $\Theta $ where $%
Y=\delta _{\Theta ,X}\left( X\right) $. We have that $Y=\delta _{\Theta
,X}\left( X\right) =\biguplus\limits_{i\in \Gamma }\delta _{\Theta ,X}\left(
X^{\left( i\right) }\right) $. We denote $\delta _{\Theta ,X}\left(
X^{\left( i\right) }\right) $ by $Y^{\left( i\right) }$. If for any $i\in
\Gamma $ we have that $\Theta $ is $i$-nondegenerate variety, then by
Proposition \ref{X_injection} we have that $\left\vert X^{\left( i\right)
}\right\vert =\left\vert Y^{\left( i\right) }\right\vert =\left\vert
Z^{\left( i\right) }\right\vert $. If for $i\in \Gamma $ we have that $%
\Theta $ is $i$-degenerate variety then by Proposition \ref{les2} $%
\left\vert Z^{\left( i\right) }\right\vert =0$ or $\left\vert Z^{\left(
i\right) }\right\vert =1$. We have in the first case that $\left\vert
X^{\left( i\right) }\right\vert =0$ and $\left\vert Y^{\left( i\right)
}\right\vert =0$. We have in the second case that $\left\vert X^{\left(
i\right) }\right\vert =1$ and $\left\vert Y^{\left( i\right) }\right\vert =1$%
. Therefore in all theses cases we have that $\left\vert Y^{\left( i\right)
}\right\vert =\left\vert Z^{\left( i\right) }\right\vert $. Hence $F_{\Theta
}\left( Z\right) \cong F_{\Theta }\left( Y\right) $.
\end{proof}

Now we consider two varieties $\Theta $ and $\Delta $ such that $\Theta
\supseteq \Delta $. Similar to Claim \ref{cl}, item \ref{c_2} we obtain the
inclusion $\mathfrak{I}\left( \Theta ,X\right) \subseteq \mathfrak{I}\left(
\Delta ,X\right) $. We have by the Third Theorem of Isomorphism that 
\begin{equation*}
\left( \mathfrak{T}\left( X\right) /\mathfrak{I}\left( \Theta ,X\right)
\right) /\left( \mathfrak{I}\left( \Delta ,X\right) /\mathfrak{I}\left(
\Theta ,X\right) \right) =F_{\Theta }\left( \delta _{\Theta ,X}\left(
X\right) \right) /\left( \mathfrak{I}\left( \Delta ,X\right) /\mathfrak{I}%
\left( \Theta ,X\right) \right) \cong
\end{equation*}%
\begin{equation*}
\mathfrak{T}\left( X\right) /\mathfrak{I}\left( \Delta ,X\right) \cong
F_{\Delta }\left( \delta _{\Delta ,X}\left( X\right) \right) .
\end{equation*}

For every $X=\biguplus\limits_{i\in \Gamma }X^{(i)}\subset X_{0}$ we
consider all identities of the form $\delta _{\Theta ,X}\left( f_{1}\right)
=\delta _{\Theta ,X}\left( f_{2}\right) $, where $f_{1},f_{2}\in \mathfrak{T}%
\left( X\right) $, which hold in all algebras of the variety $\Delta $. As
above the set of all these identities can be considered as congruence in $%
F_{\Theta }\left( \delta _{\Theta ,X}\left( X\right) \right) $ and we denote
it by $\mathfrak{I}_{\Theta }\left( \Delta ,X\right) $. For every algebra $%
H\in \Theta $ and every homomorphism $\varphi \in \mathrm{Hom}\left( 
\mathfrak{T}\left( X\right) ,H\right) $ there exists unique homomorphism $%
\psi \in \mathrm{Hom}\left( F_{\Theta }\left( \delta _{\Theta ,X}\left(
X\right) \right) ,H\right) $ such that $\varphi =\psi \delta _{\Theta ,X}$.
From this fact we immediately conclude that $\left( \mathfrak{I}\left(
\Delta ,X\right) /\mathfrak{I}\left( \Theta ,X\right) \right) =\mathfrak{I}%
_{\Theta }\left( \Delta ,X\right) $ and%
\begin{equation}
F_{\Delta }\left( \delta _{\Delta ,X}\left( X\right) \right) \cong F_{\Theta
}\left( \delta _{\Theta ,X}\left( X\right) \right) /\mathfrak{I}_{\Theta
}\left( \Delta ,X\right) .  \label{sub_free}
\end{equation}

Now we will prove one proposition which is a generalization of item \ref{c_7}
from Claim \ref{cl}.

\begin{proposition}
\label{rel_hom}\textit{If }$\Theta $ some variety of algebras, $\Delta
\subseteq \Theta $ its subvariety, $\left( g_{1}=g_{2}\right) \in \mathfrak{I%
}_{\Theta }\left( \Delta ,X\right) $ and $\varphi \in \mathrm{Hom}\left(
F_{\Theta }\left( \delta _{\Theta ,X}\left( X\right) \right) ,F_{\Theta
}\left( \delta _{\Theta ,Y}\left( Y\right) \right) \right) $, then $\left(
\varphi \left( g_{1}\right) =\varphi \left( g_{2}\right) \right) \in 
\mathfrak{I}_{\Theta }\left( \Delta ,Y\right) $.
\end{proposition}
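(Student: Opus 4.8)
The plan is to rephrase membership in $\mathfrak{I}_{\Theta}(\Delta,\cdot)$ as a statement about homomorphisms landing in algebras of $\Delta$, and then to reduce the assertion to a single composition of homomorphisms. The key auxiliary fact I would isolate is the following characterisation: a pair $(g_{1},g_{2})$ of elements of $F_{\Theta}(\delta_{\Theta,X}(X))$ lies in $\mathfrak{I}_{\Theta}(\Delta,X)$ if and only if $\psi(g_{1})=\psi(g_{2})$ for every $H\in\Delta$ and every $\psi\in\mathrm{Hom}(F_{\Theta}(\delta_{\Theta,X}(X)),H)$. The same characterisation will of course be applied with $Y$ in place of $X$.

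To prove the characterisation I would write $g_{k}=\delta_{\Theta,X}(f_{k})$ with $f_{k}\in\mathfrak{T}(X)$, $k=1,2$. By the identification $\mathfrak{I}_{\Theta}(\Delta,X)=\mathfrak{I}(\Delta,X)/\mathfrak{I}(\Theta,X)$ established just before (\ref{sub_free}), the condition $(g_{1},g_{2})\in\mathfrak{I}_{\Theta}(\Delta,X)$ is equivalent to $(f_{1}=f_{2})\in\mathfrak{I}(\Delta,X)$, i.e. to the identity $f_{1}=f_{2}$ holding in every algebra of $\Delta$; this does not depend on the chosen representatives $f_{k}$, since $\mathfrak{I}(\Delta,X)$ is a congruence containing $\mathfrak{I}(\Theta,X)$. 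Since $\Delta\subseteq\Theta$, every $H\in\Delta$ also belongs to $\Theta$, so the factorisation property recalled before (\ref{sub_free}) gives a bijection $\mu\mapsto\psi$ between $\mathrm{Hom}(\mathfrak{T}(X),H)$ and $\mathrm{Hom}(F_{\Theta}(\delta_{\Theta,X}(X)),H)$, determined by $\mu=\psi\delta_{\Theta,X}$. Under this bijection $\mu(f_{k})=\psi(\delta_{\Theta,X}(f_{k}))=\psi(g_{k})$, so the requirement that $f_{1}=f_{2}$ hold in every $H\in\Delta$ is exactly the stated condition $\psi(g_{1})=\psi(g_{2})$ for all such $H$ and $\psi$.

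With the characterisation in hand the proof of the proposition is immediate. Fix an arbitrary $H\in\Delta$ and an arbitrary $\psi\in\mathrm{Hom}(F_{\Theta}(\delta_{\Theta,Y}(Y)),H)$. The composite $\psi\varphi$ is a homomorphism, so $\psi\varphi\in\mathrm{Hom}(F_{\Theta}(\delta_{\Theta,X}(X)),H)$. Applying the characterisation (for $X$) to the hypothesis $(g_{1}=g_{2})\in\mathfrak{I}_{\Theta}(\Delta,X)$ with the homomorphism $\psi\varphi$ yields $\psi(\varphi(g_{1}))=(\psi\varphi)(g_{1})=(\psi\varphi)(g_{2})=\psi(\varphi(g_{2}))$. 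As $H$ and $\psi$ were arbitrary, the characterisation read for $Y$ gives $(\varphi(g_{1})=\varphi(g_{2}))\in\mathfrak{I}_{\Theta}(\Delta,Y)$, which is the claim.

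The only genuine work is the verification of the characterisation, and within it the single point where $\Delta\subseteq\Theta$ is used: this inclusion is what makes every $H\in\Delta$ an admissible target for homomorphisms out of the free $\Theta$-algebra, so that the universal factorisation applies. Once this is set up the concluding step is a purely formal composition. An alternative route would lift $\varphi$ along the epimorphisms $\delta_{\Theta,X}$ and $\delta_{\Theta,Y}$ to the term algebras $\mathfrak{T}(X)$ and $\mathfrak{T}(Y)$ by means of the projective property (Proposition \ref{term_proj}) and then invoke item \ref{c_7} of Claim \ref{cl}; I expect this to work as well, but to involve more bookkeeping because $\varphi$ is given on the free $\Theta$-algebras rather than on the term algebras, so I would prefer the composition argument above.
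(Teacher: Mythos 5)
Your proof is correct, but it is not the route the paper takes: the paper's own argument is precisely the ``alternative route'' you sketch and then set aside at the end. The paper lifts $\varphi$ along the natural epimorphisms to a homomorphism $\widetilde{\varphi}:\mathfrak{T}\left( X\right) \rightarrow \mathfrak{T}\left( Y\right)$ with $\varphi \delta _{\Theta ,X}=\delta _{\Theta ,Y}\widetilde{\varphi }$ using the projective property of term algebras (Proposition \ref{term_proj}), applies the substitution invariance of identities (Claim \ref{cl}, item \ref{c_7}) to $\widetilde{\varphi }$, and pushes the result back down through $\delta _{\Theta ,Y}$; the bookkeeping you were worried about amounts to two lines. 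Your argument instead isolates a semantic characterisation of $\mathfrak{I}_{\Theta }\left( \Delta ,X\right)$ as the set of pairs identified by every homomorphism from $F_{\Theta }\left( \delta _{\Theta ,X}\left( X\right) \right)$ into an algebra of $\Delta$ --- a fact legitimately available from the universal factorisation property recalled just before (\ref{sub_free}), and whose independence of the chosen representatives $f_{k}$ you correctly justify via $\mathfrak{I}\left( \Theta ,X\right) \subseteq \mathfrak{I}\left( \Delta ,X\right)$ --- after which the proposition collapses to composing $\psi$ with $\varphi$. What your route buys is independence from Proposition \ref{term_proj} and from item \ref{c_7}: you re-prove the needed invariance directly at the relative level, which makes transparent that the only role of $\Delta \subseteq \Theta$ is to make algebras of $\Delta$ admissible targets for the free $\Theta$-algebra. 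What the paper's route buys is reuse of machinery already established for the absolute case, with no auxiliary characterisation to state and verify. Both proofs are complete.
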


\begin{proof}
If $\left( g_{1}=g_{2}\right) \in \mathfrak{I}_{\Theta }\left( \Delta
,X\right) $ then $g_{i}=\delta _{\Theta ,X}\left( f_{i}\right) $, where $%
f_{i}\in \mathfrak{T}\left( X\right) $, $i=1,2$, and for every $H\in \Delta $
the $H\models \left( f_{1}=f_{2}\right) $ holds. By Proposition \ref%
{term_proj}, for every $\varphi \in \mathrm{Mor}_{\Theta ^{0}}\left(
F_{\Theta }\left( \delta _{\Theta ,X}\left( X\right) \right) ,F_{\Theta
}\left( \delta _{\Theta ,Y}\left( Y\right) \right) \right) $ there exists a
homomorphism $\widetilde{\varphi }:\mathfrak{T}\left( X\right) \rightarrow 
\mathfrak{T}\left( Y\right) $, such that $\varphi \delta _{\Theta ,X}=\delta
_{\Theta ,Y}\widetilde{\varphi }$. By Claim \ref{cl} item \ref{c_7}, the $%
H\models \left( \widetilde{\varphi }\left( f_{1}\right) =\widetilde{\varphi }%
\left( f_{2}\right) \right) $ holds for every $H\in \Delta $. It means that $%
\left( \delta _{\Theta ,Y}\widetilde{\varphi }\left( f_{1}\right) =\delta
_{\Theta ,Y}\widetilde{\varphi }\left( f_{2}\right) \right) \in \mathfrak{I}%
_{\Theta }\left( \Delta ,Y\right) $. But $\delta _{\Theta ,Y}\widetilde{%
\varphi }\left( f_{i}\right) =\varphi \delta _{\Theta ,X}\left( f_{i}\right)
=\varphi \left( g_{i}\right) $, where $i=1,2$. So $\left( \varphi \left(
g_{1}\right) =\varphi \left( g_{2}\right) \right) \in \mathfrak{I}_{\Theta
}\left( \Delta ,Y\right) $.
\end{proof}

We will denote $\mathfrak{I}_{\Theta }\left( \Delta ,X\right) $ by $%
\mathfrak{I}\left( \Delta ,X\right) $, if it cannot cause errors.

This research is motivated by universal algebraic geometry. All definitions
of the basic notions of the universal algebraic geometry can be found, for
example, in \cite{PlotkinVarCat}, \cite{PlotkinNotions}, \cite{PlotkinSame}
and \cite{PP}. Also, there are fundamental papers \cite{BMR}, \cite{MR} and 
\cite{DMR2}, \cite{DMR5}.

The relation between geometric and automorphic equivalences of universal
algebras of some variety $\Theta $ is a one important question of universal
algebraic geometry (see \cite{PlotkinSame} and \cite{PP}). We need for the
studying of this relation, at first, consider a category $\Theta ^{0}$ of
the finitely generated free algebras of a variety $\Theta $. Objects of this
category are the finitely generated free algebras of the variety $\Theta $
and morphisms of this category are their homomorphisms. After this we
consider the quotient group $\mathfrak{A/Y}$, where $\mathfrak{A}$\ is the
group of all automorphisms of the category $\Theta ^{0}$ and $\mathfrak{Y}$
is the subgroup of all inner automorphisms of this category. If this
quotient group is trivial, then the geometric and automorphic equivalences
are coincides in the variety $\Theta $ (see \cite{PlotkinSame}). If the
group $\mathfrak{A/Y}$ is not trivial, then often, but not always, we can
give example of two algebras of the variety $\Theta $ which are automorphic
equivalent but not geometric equivalent. From this place onwards we consider
only finitely generated free algebras of some variety $\Theta $, i.e.,
algebras $F_{\Theta }\left( X\right) $, where $X\subset X_{0}$ and $%
\left\vert X\right\vert <\infty $.

For the computing of the group $\mathfrak{A/Y}$ was elaborated on the simple
but very strong method of the verbal operations (see \cite{PlotkinZhitom}
and \cite{TsurkovManySorted}). This method can be applied only when the

\begin{condition}
\label{basic_cond}For every $\Phi \in \mathfrak{A}$, every $i\in \Gamma $
and every $x^{(i)}\in X_{0}^{(i)}$ the isomorphism $\Phi \left( F_{\Theta
}\left( x^{(i)}\right) \right) \cong F_{\Theta }\left( x^{(i)}\right) $
holds.
\end{condition}

\noindent fulfills in the variety $\Theta $.

The IBN (invariant basis number) property or invariant dimension property
was defined initially in the theory of rings and modules, see, for example, 
\cite[Definition 2.8]{Hungerford}. But we can generalize this property for
an arbitrary variety of universal algebras.

\begin{definition}
\label{IBN-variety}We say that a variety $\Theta $ is an $i$\textbf{%
-IBN-variety}, where $i\in \Gamma $, if from $F_{\Theta }\left( Y\right)
\cong F_{\Theta }\left( Z\right) $, where $Y=\biguplus\limits_{j\in \Gamma
}Y^{\left( j\right) }$, $Z=\biguplus\limits_{j\in \Gamma }Z^{\left( j\right)
}$ are sets of free generators of corresponding algebras , we can conclude
that $\left\vert Y^{\left( i\right) }\right\vert =\left\vert Z^{\left(
i\right) }\right\vert $. A variety $\Theta $ is called \textbf{IBN-variety}
or variety which has an \textbf{IBN propriety} if it is an $i$-IBN-variety
for every $i\in \Gamma $.
\end{definition}

In the case of one-sorted algebras the Condition \ref{basic_cond} is weaker
than the IBN propriety. From the IBN propriety of a variety $\Theta $ we can
conclude by the method of \cite[Section 5]{ShestTsur} that in this variety
the Condition \ref{basic_cond} holds. But exist varieties in which the
Condition \ref{basic_cond} holds though these varieties have no the IBN
propriety. The next example is a folklore of the theory of rings and modules.

\begin{example}
We consider some field $k$ and a vector space $V$ over this field such that $%
\dim _{k}V=\infty $. We denote by $A$ the ring of all linear operators over
the vector space $V$: $A=\mathrm{End}_{k}V$. It is known that variety $_{A}%
\mathfrak{M}$ of all lefts modules over the ring $A$ has not the IBN
propriety.
\end{example}

In this variety hold these isomorphisms: $A\cong A\oplus A\cong A\oplus
A\oplus A\cong \ldots \cong \underset{n\text{ }times}{\underbrace{A\oplus
\ldots \oplus A}}\cong \ldots $. But (E. Aladova) in this variety the
Condition \ref{basic_cond} holds, because in this variety all finitely
generated free algebras are isomorphic.

In the case of many-sorted algebras we can not conclude Condition \ref%
{basic_cond} directly from the IBN propriety.

\begin{example}
$\mathcal{SET-COUP}$ is the variety of the couples of sets.
\end{example}

This is a variety of two-sorted algebras, i.e., $\Gamma =\left\{ 1,2\right\} 
$: elements of the first set of a couple are elements of the first sort,
elements of the second set of a couple are elements of the second sort.
Algebras of this variety has the empty signature and this variety is defined
by the empty set of identities. We denote this variety by $\mathcal{SET-COUP}
$. Section \ref{bf} we will prove that this variety has an IBN propriety.\
We consider the functor $\Phi :\mathcal{SET-COUP}^{0}\rightarrow \mathcal{%
SET-COUP}^{0}$ which "switch" the sets in every couple, or, more formal, the
functor $\Phi $ such that%
\begin{equation*}
\Phi \left( F_{\mathcal{SET-COUP}}\left( X\right) \right) =F_{\mathcal{%
SET-COUP}}\left( Y\right) ,
\end{equation*}%
where $X=\left\{ x_{\alpha _{1}}^{(1)},\ldots x_{\alpha _{m}}^{(1)}\right\}
\uplus \left\{ x_{\beta _{1}}^{(2)},\ldots x_{\beta _{n}}^{(2)}\right\} $, $%
Y=\left\{ x_{\beta _{1}}^{(1)},\ldots x_{\beta _{n}}^{(1)}\right\} \uplus
\left\{ x_{\alpha _{1}}^{(2)},\ldots x_{\alpha _{m}}^{(2)}\right\} $, and%
\begin{equation*}
\Phi \left( f:F_{\mathcal{SET-COUP}}\left( A\right) \rightarrow F_{\mathcal{%
SET-COUP}}\left( C\right) \right) =
\end{equation*}%
\begin{equation*}
g:F_{\mathcal{SET-COUP}}\left( \Phi \left( A\right) \right) \rightarrow F_{%
\mathcal{SET-COUP}}\left( \Phi \left( C\right) \right) ,
\end{equation*}%
where $A=\left\{ x_{\alpha _{1}}^{(1)},\ldots x_{\alpha _{k}}^{(1)}\right\}
\uplus \left\{ x_{\beta _{1}}^{(2)},\ldots x_{\beta _{l}}^{(2)}\right\} $, $%
C=\left\{ x_{\gamma _{1}}^{(1)},\ldots x_{\gamma _{p}}^{(1)}\right\} \uplus
\left\{ x_{\delta _{1}}^{(2)},\ldots x_{\delta _{r}}^{(2)}\right\} $, $\Phi
\left( A\right) =\left\{ x_{\beta _{1}}^{(1)},\ldots x_{\beta
_{l}}^{(1)}\right\} \uplus \left\{ x_{\alpha _{1}}^{(2)},\ldots x_{\alpha
_{k}}^{(2)}\right\} $, $\Phi \left( C\right) =\left\{ x_{\delta
_{1}}^{(1)},\ldots x_{\delta _{r}}^{(1)}\right\} \uplus \left\{ x_{\gamma
_{1}}^{(2)},\ldots x_{\gamma _{p}}^{(2)}\right\} $ and if $f\left( x_{\alpha
_{i}}^{(1)}\right) =x_{\gamma _{j}}^{(1)}$, $f\left( x_{\beta
_{u}}^{(2)}\right) =x_{\delta _{w}}^{(2)}$, then $g\left( x_{\beta
_{u}}^{(1)}\right) =x_{\delta _{w}}^{(1)}$, $g\left( x_{\alpha
_{i}}^{(2)}\right) =x_{\gamma _{j}}^{(2)}$. This functor is the inverse of
itself, but%
\begin{equation*}
\Phi \left( F_{\mathcal{SET-COUP}}\left( x_{1}^{\left( 1\right) }\right)
\right) =F_{\mathcal{SET-COUP}}\left( x_{1}^{\left( 2\right) }\right) \ncong
F_{\mathcal{SET-COUP}}\left( x_{1}^{\left( 1\right) }\right) .
\end{equation*}%
It means that Condition \ref{basic_cond} holds not in this variety.

But in many cases, we can use some additional considerations to deduce
Condition \ref{basic_cond} from the IBN propriety of some variety. See, for
example, \cite[Section 5]{TsurkovManySorted} about a variety of all actions
of semigroups over sets, variety of all automatons, variety of all
representations of groups, and \cite[Section 4]{TsurkovSubvarLie} about a
very wide class of subvarieties of the variety of all representation of Lie
algebras.

Therefore, the proving of the IBN propriety of some variety is a milestone
in the study of the relation between geometric and automorphic equivalences
of algebras of this variety.

\section{Simple cases\label{bf}}

\setcounter{equation}{0}

In this section we consider such varieties for which the IBN property can be
proved directly.

\begin{example}
\label{vector_space}The variety of all vector spaces over a fixed field $k$.
\end{example}

We consider vector spaces over a fixed field $k$ as one-sorted algebras. For
every scalar $\lambda \in k$ the multiplication of vectors by this scalar we
consider as one unary operation. A linear map from one vector space to other
can be uniquely defined by images of basic vectors. Therefore all vector
spaces over a fixed field $k$ are free algebras of this variety and the
vectors of basis are free generators of these algebras. It is known that
isomorphic vector spaces have same cardinality of bases, therefore this
variety has the IBN property.

We will prove this

\begin{theorem}
\label{terms}If $\mathfrak{T}\left( X\right) \cong \mathfrak{T}\left(
Y\right) $ then for every $i\in \Gamma $ the equality $\left\vert X^{\left(
i\right) }\right\vert =\left\vert Y^{\left( i\right) }\right\vert $ holds.
\end{theorem}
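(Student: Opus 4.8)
The plan is to detect the cardinality $\left\vert X^{(i)}\right\vert$ intrinsically from the algebra $\mathfrak{T}\left( X\right)$ by counting homomorphisms into a carefully chosen ``probe'' algebra. The starting point is the observation, immediate from Proposition \ref{free_empty} together with the sort-preservation equation (\ref{hom_1}), that for any algebra $H$ all of whose sorts are nonempty a homomorphism $\mathfrak{T}\left( X\right) \rightarrow H$ is exactly the same datum as a sort-preserving map $X\rightarrow H$; hence $\left\vert \mathrm{Hom}\left( \mathfrak{T}\left( X\right) ,H\right) \right\vert =\prod_{i\in \Gamma }\left\vert H^{\left( i\right) }\right\vert ^{\left\vert X^{\left( i\right) }\right\vert }$. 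Since an isomorphism $\mathfrak{T}\left( X\right) \cong \mathfrak{T}\left( Y\right)$ induces, by precomposition, a bijection $\mathrm{Hom}\left( \mathfrak{T}\left( Y\right) ,H\right) \rightarrow \mathrm{Hom}\left( \mathfrak{T}\left( X\right) ,H\right)$ for every such $H$, we obtain the equality of cardinals $\prod_{i\in \Gamma }\left\vert H^{\left( i\right) }\right\vert ^{\left\vert X^{\left( i\right) }\right\vert }=\prod_{i\in \Gamma }\left\vert H^{\left( i\right) }\right\vert ^{\left\vert Y^{\left( i\right) }\right\vert }$, valid for all $H$.

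To isolate a single sort $i\in \Gamma$, I would construct a probe algebra $H_{i}$ with $\left\vert H_{i}^{\left( i\right) }\right\vert =2$ and $\left\vert H_{i}^{\left( j\right) }\right\vert =1$ for every $j\neq i$. This is a legitimate $\Omega$-algebra: every operation whose output sort is not $i$ lands in a one-element set and is therefore forced, while every operation whose output sort is $i$ may be defined arbitrarily, say as the constant map onto a fixed element of $H_{i}^{\left( i\right) }$. Because all sorts of $H_{i}$ are nonempty we have $\Gamma _{X}\subseteq \Gamma =\Gamma _{H_{i}}$, so the counting formula applies, and every factor of the product with $j\neq i$ equals $1$. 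Substituting $H_{i}$ into the displayed equality therefore leaves $2^{\left\vert X^{\left( i\right) }\right\vert }=2^{\left\vert Y^{\left( i\right) }\right\vert }$.

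It remains to pass from $2^{\left\vert X^{\left( i\right) }\right\vert }=2^{\left\vert Y^{\left( i\right) }\right\vert }$ to $\left\vert X^{\left( i\right) }\right\vert =\left\vert Y^{\left( i\right) }\right\vert$, and this is the one step that genuinely uses the hypotheses of the setup rather than pure formalism. Because $X,Y\subset X_{0}$ and each $X_{0}^{\left( i\right) }$ is countable, the cardinals $\left\vert X^{\left( i\right) }\right\vert ,\left\vert Y^{\left( i\right) }\right\vert$ are at most $\aleph _{0}$. For such cardinals the assignment $a\mapsto 2^{a}$ is injective: distinct finite exponents give distinct finite powers, whereas the only infinite value $\aleph _{0}$ yields $2^{\aleph _{0}}$, which is strictly larger than every $2^{n}$ with $n$ finite. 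Hence $2^{\left\vert X^{\left( i\right) }\right\vert }=2^{\left\vert Y^{\left( i\right) }\right\vert }$ forces $\left\vert X^{\left( i\right) }\right\vert =\left\vert Y^{\left( i\right) }\right\vert$, which is the claim. I expect this cardinal-arithmetic step to be the main point to watch: the argument relies essentially on the boundedness of the generating sets by $\aleph _{0}$, since for larger cardinals $2^{a}=2^{b}$ need not imply $a=b$, and then the hom-counting invariant alone would no longer suffice.
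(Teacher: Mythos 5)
Your proof is correct, but it takes a genuinely different route from the paper's. The paper argues syntactically: it defines a length function on terms by induction on construction, proves that every homomorphism $\varphi :\mathfrak{T}\left( X\right) \rightarrow \mathfrak{T}\left( Y\right) $ satisfies $l\left( t\right) \leq l\left( \varphi \left( t\right) \right) $, and concludes that an isomorphism must carry each generator $x^{(i)}$ to an element of length $0$ of $\mathfrak{T}\left( Y\right) $ which is not a constant (constants are excluded because $\varphi \left( c\right) =c$ would violate injectivity), hence to a generator of the same sort; the restriction of the isomorphism is then an explicit sort-preserving bijection $X^{(i)}\rightarrow Y^{(i)}$. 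Your argument instead counts homomorphisms into a probe algebra with two elements in sort $i$ and singletons elsewhere; such an algebra exists exactly as you describe, the identification of $\mathrm{Hom}\left( \mathfrak{T}\left( X\right) ,H\right) $ with sort-preserving maps $X\rightarrow H$ follows from Proposition \ref{free_empty} together with (\ref{hom_1}), and precomposition with the isomorphism gives the bijection of Hom-sets, so you correctly arrive at $2^{\left\vert X^{(i)}\right\vert }=2^{\left\vert Y^{(i)}\right\vert }$. What each approach buys: the paper's proof avoids all cardinal arithmetic, works for generating sets of arbitrary cardinality, and actually produces a bijection between the generating sets rather than a mere equality of cardinals; your proof avoids the induction on terms entirely and is more categorical, but the final cancellation of the exponentials genuinely needs $\left\vert X^{(i)}\right\vert ,\left\vert Y^{(i)}\right\vert \leq \aleph _{0}$, which you correctly flag and which does hold here because $X,Y\subset X_{0}$ with each $X_{0}^{(i)}$ countable. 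Within the paper's standing conventions your argument is complete.
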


\begin{proof}
We will define by induction by construction the length of terms, i.e.,
elements of algebra $\mathfrak{T}\left( X\right) $. We will denote a length
of $t\in \mathfrak{T}\left( X\right) $ by $l\left( t\right) $. We define
that length $0$ have only generators, i.e., elements of the set $X$, and
constants of the signature $\Omega $. We suppose that we just defined what
terms have a length $j$ for every $j<n$, where $j,n\in 
\mathbb{N}
$. Now we we will defined what terms have a length $n$. We consider an
operation $\omega \in \Omega $ such that the arity $r$ of this operation is
positive, i.e., $r>0$. If $t_{1},\ldots ,t_{r}\in \mathfrak{T}\left(
X\right) $, such that $l\left( t_{1}\right) ,\ldots ,l\left( t_{r}\right) <n$
and $\max \left\{ l\left( t_{1}\right) ,\ldots ,l\left( t_{r}\right)
\right\} =n-1$, then $l\left( \omega \left( t_{1},\ldots ,t_{r}\right)
\right) =n$.

Now we consider a situation when there exists a homomorphism $\varphi :%
\mathfrak{T}\left( X\right) \rightarrow \mathfrak{T}\left( Y\right) $. The
length of terms is defined both in $\mathfrak{T}\left( X\right) $ and $%
\mathfrak{T}\left( Y\right) $. We will prove by induction by length of term
that for every $t\in \mathfrak{T}\left( X\right) $ the inequality $l\left(
t\right) \leq l\left( \varphi \left( t\right) \right) $ holds. If $l\left(
t\right) =0$, then this inequality is trivial. We suppose that we proved
this inequality when $l\left( t\right) <n$, where $n\in 
\mathbb{N}
$. Let $l\left( t\right) =n$. It means by our definition that $t=\omega
\left( t_{1},\ldots ,t_{r}\right) $, where $\omega \in \Omega $, the arity
of the operation $\omega $ is equal to $r$, $t_{1},\ldots ,t_{r}\in 
\mathfrak{T}\left( X\right) $ and $\max \left\{ l\left( t_{1}\right) ,\ldots
,l\left( t_{r}\right) \right\} =n-1$. So, $\varphi \left( t\right) =\omega
\left( \varphi \left( t_{1}\right) ,\ldots ,\varphi \left( t_{r}\right)
\right) $. By induction hypothesis we have that $l\left( t_{i}\right) \leq
l\left( \varphi \left( t_{i}\right) \right) $, where $1\leq i\leq r$.
Therefore $\max \left\{ l\left( \varphi \left( t_{1}\right) \right) ,\ldots
,l\left( \varphi \left( t_{r}\right) \right) \right\} \geq n-1$ and $l\left(
\varphi \left( t\right) \right) =\max \left\{ l\left( \varphi \left(
t_{1}\right) \right) ,\ldots ,l\left( \varphi \left( t_{r}\right) \right)
\right\} +1\geq n=l\left( t\right) =n$.

Now we suppose that there exists an isomorphism $\varphi :\mathfrak{T}\left(
X\right) \rightarrow \mathfrak{T}\left( Y\right) $. We will prove that for
every $i\in \Gamma $ there exists a bijection $\psi _{i}:X^{(i)}\rightarrow
Y^{(i)}$. We consider $x^{(i)}\in X^{(i)}$ and will prove that $\varphi
\left( x^{(i)}\right) \in Y^{(i)}$. If $\varphi \left( x^{(i)}\right) =t\in 
\mathfrak{T}\left( Y\right) $, then $\varphi ^{-1}\left( t\right) =x^{(i)}$. 
$\varphi ^{-1}:\mathfrak{T}\left( Y\right) \rightarrow \mathfrak{T}\left(
X\right) $ is also a homomorphism. It was proved that $l\left( t\right) \leq
l\left( \varphi ^{-1}\left( t\right) \right) =l\left( x^{(i)}\right) =0$.
Therefore $t\in Y$ or $t=c$ is a constant. We have in the second case that $%
t=c$ is also an element of $\mathfrak{T}\left( X\right) $ and $\varphi
\left( x^{(i)}\right) =t=c=\varphi \left( c\right) $, which contradicts the
injectivity of the mapping $\varphi $. In the first case we have by (\ref%
{hom_1}) that $\varphi \left( x^{(i)}\right) \in Y^{(i)}$. It means that
there exists a mapping $\psi _{i}=\varphi _{\mid X^{(i)}}:X^{(i)}\rightarrow
Y^{(i)}$. We have by symmetry that also there exists a mapping $\chi
_{i}=\left( \varphi ^{-1}\right) _{\mid Y^{(i)}}:Y^{(i)}\rightarrow X^{(i)}$%
. It is clear that $\chi _{i}\psi _{i}=id_{X^{(i)}}$ and $\psi _{i}\chi
_{i}=id_{Y^{(i)}}$. Therefore $\psi _{i}$ is a bijection. This completes the
proof.
\end{proof}

We conclude from this theorem that

\begin{example}
Varieties defined by empty set of identities
\end{example}

are IBN-varieties.

In particular

\begin{example}
\label{sets}The variety of all sets
\end{example}

is an IBN-variety.

\begin{example}
The variety of all graphs.
\end{example}

We consider graphs as $2$-sorted algebras. The first sort is the sort of
edges of a graph, the second sort is the sort of vertices of a graph. The
signature of graphs contain $2$ unary operations: $h$ and $t$. The operation 
$h$ define a head of an edge and the operation $t$ define a tail of an edge.
These operations have a same type: $\tau _{h}=\tau _{t}=\left( 1;2\right) $.
The variety of all graphs defined by empty set of identities, so this
variety is an IBN-variety.

\begin{example}
The variety of all automatons.
\end{example}

This variety was considered in \cite{TsurkovManySorted}. We consider
automatons as $2$-sorted algebras.$\ $The first sort is a sort of input
signals, the second sort is a sort of statements of an automaton, the third
sort is a sort of output signals. $\Omega =\left\{ \ast ,\circ \right\} $.
The operation $\ast $ gives as a new statement of an automaton according to
an input signal and a previous statement of automaton: $\tau _{\ast }=\left(
1,2;2\right) $. The operation $\circ $ gives an output signal according to
an input signal and a statement of automaton. $\tau _{\circ }=\left(
1,2;3\right) $. The variety of all automatons defined by empty set of
identities, so this variety is an IBN-variety.

\section{Functor $\mathcal{D}$. General results.}

\setcounter{equation}{0}

In this section we study IBM property of a variety by properties of its
subvarieties. This method seems to us rather strong. We can choose such
subvarieties that an observation of their free algebras is very simple. The
first result in this direction was obtained by \cite{Fujiwara}. Were
considered the one-sorted algebras and was proved the

\begin{theorem}
\label{Fuji}The variety $\Theta $ is an IBM-variety if there exists a
subvariety $\Delta \subseteq \Theta $ such that $\Delta $ is a nondegenerate
and for every $F\in \mathrm{Ob}\Delta ^{0}$ the inequality $\left\vert
F\right\vert <\infty $ holds.
\end{theorem}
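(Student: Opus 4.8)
The plan is to reduce the IBN property of $\Theta $ to that of the well-behaved subvariety $\Delta $, and then to establish it for $\Delta $ by a counting argument together with a direct cardinality computation. First I would show that an isomorphism $\varphi :F_{\Theta }\left( Y\right) \rightarrow F_{\Theta }\left( Z\right) $ descends to an isomorphism $F_{\Delta }\left( Y\right) \cong F_{\Delta }\left( Z\right) $. The point is that $\mathfrak{I}_{\Theta }\left( \Delta ,Y\right) $ is intrinsically the smallest congruence on $F_{\Theta }\left( Y\right) $ whose quotient lies in $\Delta $: a congruence $U$ satisfies $F_{\Theta }\left( Y\right) /U\in \Delta $ exactly when $U\supseteq \mathfrak{I}_{\Theta }\left( \Delta ,Y\right) $, by closure under substitution (Proposition \ref{rel_hom}). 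Since $\Delta $ is closed under isomorphism, $\varphi $ carries this smallest congruence on $F_{\Theta }\left( Y\right) $ onto the corresponding one on $F_{\Theta }\left( Z\right) $; then by the quotient-descent lemma recalled in Section 1, applied to $\varphi $ and to $\varphi ^{-1}$, it induces an isomorphism of the quotients, which by (\ref{sub_free}) are $F_{\Delta }\left( Y\right) $ and $F_{\Delta }\left( Z\right) $. Thus it suffices to prove that $\Delta $ itself is an IBN-variety.

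Next I would separate free algebras of finite and infinite rank inside $\Delta $. By nondegeneracy and Proposition \ref{X_injection} the free generators stay distinct in any free algebra, so $\left\vert F_{\Delta }\left( W\right) \right\vert \geq \left\vert W\right\vert $; hence a free algebra of infinite rank is infinite, whereas every finitely generated free algebra of $\Delta $ is finite by hypothesis. Consequently $F_{\Delta }\left( Y\right) \cong F_{\Delta }\left( Z\right) $ forces $Y$ and $Z$ to be simultaneously finite or simultaneously infinite, which already disposes of the mixed case. For the infinite case I would use that every element of a free algebra lies in the subalgebra generated by finitely many of the free generators, and such a subalgebra is itself a finitely generated free algebra of $\Delta $, hence finite. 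For infinite $W$ there are only $\left\vert W\right\vert $ finite subsets of $W$, so $\left\vert F_{\Delta }\left( W\right) \right\vert \leq \left\vert W\right\vert \cdot \aleph _{0}=\left\vert W\right\vert $, and together with $\left\vert F_{\Delta }\left( W\right) \right\vert \geq \left\vert W\right\vert $ this gives $\left\vert F_{\Delta }\left( W\right) \right\vert =\left\vert W\right\vert $. An isomorphism then yields $\left\vert Y\right\vert =\left\vert F_{\Delta }\left( Y\right) \right\vert =\left\vert F_{\Delta }\left( Z\right) \right\vert =\left\vert Z\right\vert $.

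For the finite case I would count homomorphisms into a fixed finite test algebra. Fix $H=F_{\Delta }\left( x_{1},x_{2}\right) $; by nondegeneracy $\left\vert H\right\vert \geq 2$, and by hypothesis $H$ is finite, say $\left\vert H\right\vert =c\geq 2$. For any $W$ the universal property of the free algebra gives a bijection between $\mathrm{Hom}\left( F_{\Delta }\left( W\right) ,H\right) $ and the set of all maps $W\rightarrow H$, so $\left\vert \mathrm{Hom}\left( F_{\Delta }\left( W\right) ,H\right) \right\vert =c^{\left\vert W\right\vert }$. If $F_{\Delta }\left( Y\right) \cong F_{\Delta }\left( Z\right) $ with $Y,Z$ finite, the two Hom-sets are in bijection, whence $c^{\left\vert Y\right\vert }=c^{\left\vert Z\right\vert }$; since $c\geq 2$ this forces $\left\vert Y\right\vert =\left\vert Z\right\vert $. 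Combining the two cases shows $\Delta $ is an IBN-variety, and by the reduction of the first paragraph so is $\Theta $.

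The main obstacle I anticipate is the infinite-rank case: the homomorphism-counting argument collapses there, since it would only give $2^{\left\vert Y\right\vert }=2^{\left\vert Z\right\vert }$, which need not determine the cardinals. Hence the cardinality computation through finitely generated subalgebras is essential, and it is precisely the step that genuinely uses the \emph{finiteness} of the finitely generated free algebras of $\Delta $ rather than merely their existence. Some care is also needed to justify that the canonical congruence $\mathfrak{I}_{\Theta }\left( \Delta ,\cdot \right) $ is preserved by isomorphisms, as this is what legitimizes the reduction from $\Theta $ to $\Delta $.
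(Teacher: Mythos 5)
Your proof is correct, but note first that the paper itself offers no proof of this theorem: it is quoted from Fujiwara's note \cite{Fujiwara}, and the author instead proves the generalization, Theorem \ref{main}, so there is no in-paper argument to match yours against line by line. Your first paragraph --- descending an isomorphism $F_{\Theta }\left( Y\right) \cong F_{\Theta }\left( Z\right) $ to $F_{\Delta }\left( Y\right) \cong F_{\Delta }\left( Z\right) $ --- reconstructs exactly the content of Lemma \ref{un_hom} and Corollary \ref{functor} (the functor $\mathcal{D}_{\Delta }^{\Theta }$ carries isomorphisms to isomorphisms); your characterization of $\mathfrak{I}_{\Theta }\left( \Delta ,Y\right) $ as the least congruence whose quotient lies in $\Delta $ is a clean alternative to the paper's route of applying Proposition \ref{rel_hom} to $\varphi $ and to $\varphi ^{-1}$, and both are sound. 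What you add, and what the paper outsources to the reference, is the verification that a nondegenerate variety all of whose finitely generated free algebras are finite is itself an IBN-variety; your counting of $\mathrm{Hom}\left( F_{\Delta }\left( W\right) ,H\right) $ into the finite test algebra $H=F_{\Delta }\left( x_{1},x_{2}\right) $ with $\left\vert H\right\vert =c\geq 2$ (nondegeneracy plus Proposition \ref{X_injection} gives $c\geq 2$) is the classical argument, and it exhibits Theorem \ref{Fuji} as a corollary of Theorem \ref{main}. Your infinite-rank case is harmless but strictly optional, since from Section 1 onward the paper restricts attention to finitely generated free algebras, and the objects of $\Delta ^{0}$ are finitely generated by definition. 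Two small points worth tightening: the equivalence ``$F_{\Theta }\left( Y\right) /U\in \Delta $ iff $U\supseteq \mathfrak{I}_{\Theta }\left( \Delta ,Y\right) $'' rests on closure of $\Delta $ under quotients in one direction and on the definition of $\mathfrak{I}_{\Theta }\left( \Delta ,Y\right) $ in the other, not really on Proposition \ref{rel_hom}; and in the last step you should say explicitly that $\left\vert Y\right\vert =\left\vert \delta _{\Delta ,Y}\left( Y\right) \right\vert $, which is again Proposition \ref{X_injection} applied to the nondegenerate $\Delta $.
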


We will consider one example as an application of this theorem. We will use
this example below.

\begin{example}
\label{gr}A nondegenerate variety of groups.
\end{example}

We will prove that every nondegenerate variety of groups is an IBN-variety.
We denote some nondegenerate variety of groups, which we will consider, by $%
\Theta $. By \cite[Theorem 15.1.10]{KM}, $\Theta $ defined by set of
identities%
\begin{equation*}
W=\left\{ x_{1}^{d}=1,u_{1}=1,u_{2}=1,\ldots \right\} ,
\end{equation*}%
where $d\in 
\mathbb{N}
$, $d\neq 1$, $u_{i}$ are elements of commutators of free groups $F\left(
X\right) $, such that $\left\vert X\right\vert <\infty $. We will consider
two cases:

\begin{case}
$2\mid d$, in particular $d=0$.
\end{case}

In this case we consider the subvariety $\Delta \subseteq \Theta $ defined
by identity%
\begin{equation*}
x_{1}^{2}=1.
\end{equation*}%
This is a nondegenerate variety, because this variety contains the group $%
\mathbb{Z}
_{2}$. By theorem about finitely generated abelian group every group of this
variety generated by $n$ generators contains $2^{n}$ elements. Therefore $%
\Delta $ fulfills conditions of Theorem \ref{Fuji} and $\Theta $ is an
IBN-variety.

\begin{case}
$2\nmid d$.
\end{case}

In this case there exists a prime number $p>2$ such that $p\mid d$. We
consider the subvariety $\Delta \subseteq \Theta $ defined by identities%
\begin{equation*}
x_{1}^{p}=1,x_{1}x_{2}=x_{2}x_{1}.
\end{equation*}%
As above, this is a nondegenerate variety, because this variety contains the
group $%
\mathbb{Z}
_{p}$, and every group of this variety generated by $n$ generators contains $%
p^{n}$ elements. And as above, we conclude from Theorem \ref{Fuji} that $%
\Theta $ is an IBN-variety.

In this section we generalize the Theorem \ref{Fuji}.

We consider only free algebras of varieties which have form (\ref%
{free_alg_var}), because by Proposition \ref{up_to_isomorphism} all free
algebras of varieties are isomorphic to the free algebras of this form.

We consider a variety of universal algebras $\Theta $ with a set $\Gamma $
of names of sorts. If $\Delta \subseteq \Theta $ then by (\ref{sub_free}) $%
F_{\Delta }\left( \delta _{\Delta ,X}\left( X\right) \right) \cong F_{\Theta
}\left( \delta _{\Theta ,X}\left( X\right) \right) /\mathfrak{I}_{\Theta
}\left( \Delta ,X\right) $. We will denote by $\delta _{\Delta ,X}^{\Theta
}:F_{\Theta }\left( \delta _{\Theta ,X}\left( X\right) \right) \rightarrow
F_{\Theta }\left( \delta _{\Theta ,X}\left( X\right) \right) /\mathfrak{I}%
_{\Theta }\left( \Delta ,X\right) $ the natural epimorphism.

\begin{lemma}
\label{un_hom}If $\Delta \subseteq \Theta $ then for every $\varphi \in 
\mathrm{Mor}_{\Theta ^{0}}\left( F_{\Theta }\left( \delta _{\Theta ,X}\left(
X\right) \right) ,F_{\Theta }\left( \delta _{\Theta ,Y}\left( Y\right)
\right) \right) $ there exists unique $\varphi ^{\ast }\in \mathrm{Mor}%
_{\Delta ^{0}}\left( F_{\Delta }\left( \delta _{\Delta ,X}\left( X\right)
\right) ,F_{\Delta }\left( \delta _{\Delta ,Y}\left( Y\right) \right)
\right) $ such that $\delta _{\Delta ,Y}^{\Theta }\varphi =\varphi ^{\ast
}\delta _{\Delta ,X}^{\Theta }$.
\end{lemma}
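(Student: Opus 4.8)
The plan is to realize $\varphi ^{\ast }$ as the homomorphism induced by $\varphi $ on the quotients, obtaining both its existence and its uniqueness from results already established. Recall from (\ref{sub_free}) that $F_{\Delta }\left( \delta _{\Delta ,X}\left( X\right) \right) \cong F_{\Theta }\left( \delta _{\Theta ,X}\left( X\right) \right) /\mathfrak{I}_{\Theta }\left( \Delta ,X\right) $ and likewise for $Y$, so that $\delta _{\Delta ,X}^{\Theta }$ and $\delta _{\Delta ,Y}^{\Theta }$ are exactly the natural epimorphisms onto these quotients.

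First I would dispose of uniqueness. Since $\delta _{\Delta ,X}^{\Theta }$ is an epimorphism, it is surjective, so any $\varphi ^{\ast }$ satisfying $\varphi ^{\ast }\delta _{\Delta ,X}^{\Theta }=\delta _{\Delta ,Y}^{\Theta }\varphi $ is determined on all of $F_{\Delta }\left( \delta _{\Delta ,X}\left( X\right) \right) $; hence at most one such $\varphi ^{\ast }$ can exist.

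For existence, I would apply the induced-homomorphism Lemma of Section 1 with $H=F_{\Theta }\left( \delta _{\Theta ,X}\left( X\right) \right) $, $G=F_{\Theta }\left( \delta _{\Theta ,Y}\left( Y\right) \right) $, the congruences $U=\mathfrak{I}_{\Theta }\left( \Delta ,X\right) $ and $V=\mathfrak{I}_{\Theta }\left( \Delta ,Y\right) $, and the given homomorphism $\varphi $. The only hypothesis to verify is $\varphi \left( U\right) \subseteq V$, i.e. that $\varphi $ carries every pair of $\mathfrak{I}_{\Theta }\left( \Delta ,X\right) $ into $\mathfrak{I}_{\Theta }\left( \Delta ,Y\right) $; but this is precisely the content of Proposition \ref{rel_hom}. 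The Lemma then produces a homomorphism $\psi :H/U\rightarrow G/V$ with $\delta _{V}\varphi =\psi \delta _{U}$, which under the identifications above reads $\delta _{\Delta ,Y}^{\Theta }\varphi =\psi \delta _{\Delta ,X}^{\Theta }$, so $\varphi ^{\ast }:=\psi $ is the desired morphism. Since $H/U$ and $G/V$ are free algebras of $\Delta $, the map $\varphi ^{\ast }$ automatically lies in $\mathrm{Mor}_{\Delta ^{0}}\left( F_{\Delta }\left( \delta _{\Delta ,X}\left( X\right) \right) ,F_{\Delta }\left( \delta _{\Delta ,Y}\left( Y\right) \right) \right) $.

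The step that carries the real weight is the verification of $\varphi \left( U\right) \subseteq V$, and this has already been secured as Proposition \ref{rel_hom}; once it is invoked, nothing beyond the general induced-map Lemma and the surjectivity of $\delta _{\Delta ,X}^{\Theta }$ is required. I therefore do not expect any genuine obstacle here: the statement is essentially a functoriality assertion that packages Proposition \ref{rel_hom} together with the universal property of quotient algebras.
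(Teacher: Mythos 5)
Your proposal is correct and follows essentially the same route as the paper: both verify $\varphi\left(\mathfrak{I}_{\Theta }\left( \Delta ,X\right)\right) \subseteq \mathfrak{I}_{\Theta }\left( \Delta ,Y\right)$ via Proposition \ref{rel_hom}, invoke the induced-homomorphism lemma on quotients for existence, and derive uniqueness from the surjectivity of $\delta _{\Delta ,X}^{\Theta }$. No discrepancies to report.
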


\begin{proof}
By Proposition \ref{rel_hom} $\varphi \left( \mathfrak{I}_{\Theta }\left(
\Delta ,X\right) \right) \subseteq \mathfrak{I}_{\Theta }\left( \Delta
,Y\right) $. So, there exists a homomorphism%
\begin{equation*}
\varphi ^{\ast }:F_{\Theta }\left( \delta _{\Theta ,X}\left( X\right)
\right) /\mathfrak{I}_{\Theta }\left( \Delta ,X\right) \cong F_{\Delta
}\left( \delta _{\Delta ,X}\left( X\right) \right) \rightarrow 
\end{equation*}%
\begin{equation*}
F_{\Theta }\left( \left( \delta _{\Theta ,Y}\left( Y\right) \right) \right) /%
\mathfrak{I}_{\Theta }\left( \Delta ,Y\right) \cong F_{\Delta }\left( \delta
_{\Delta ,Y}\left( Y\right) \right) 
\end{equation*}%
such that $\delta _{\Delta ,Y}^{\Theta }\varphi =\varphi ^{\ast }\delta
_{\Delta ,X}^{\Theta }$.

If there are $\varphi _{1},\varphi _{2}\in \mathrm{Mor}_{\Delta ^{0}}\left(
F_{\Delta }\left( \delta _{\Delta ,X}\left( X\right) \right) ,F_{\Delta
}\left( \delta _{\Theta ,Y}\left( Y\right) \right) \right) $ such that $%
\delta _{\Delta ,Y}^{\Theta }\varphi =\varphi _{1}\delta _{\Delta
,X}^{\Theta }$ and $\delta _{\Delta ,Y}^{\Theta }\varphi =\varphi _{2}\delta
_{\Delta ,X}^{\Theta }$, then $\varphi _{1}\delta _{\Delta ,X}^{\Theta
}=\varphi _{2}\delta _{\Delta ,X}^{\Theta }$, and, because $\delta _{\Delta
,X}^{\Theta }$ is an epimorphism, $\varphi _{1}=\varphi _{2}$.
\end{proof}

Now we will define the functor $\mathcal{D}_{\Delta }^{\Theta }:$ $\Theta
^{0}\rightarrow \Delta ^{0}$. We define the mapping $\mathcal{D}_{\Delta
}^{\Theta }$ from $\mathrm{Ob}\Theta ^{0}$ to $\mathrm{Ob}\Delta ^{0}$ and
from $\mathrm{Mor}_{\Theta ^{0}}$ to $\mathrm{Mor}_{\Delta ^{0}}$ as follows:

\begin{enumerate}
\item $\mathcal{D}_{\Delta }^{\Theta }\left( F_{\Theta }\left( \delta
_{\Theta ,X}\left( X\right) \right) \right) =F_{\Delta }\left( \delta
_{\Delta ,X}\left( X\right) \right) \cong F_{\Theta }\left( \delta _{\Theta
,X}\left( X\right) \right) /\mathfrak{I}_{\Theta }\left( \Delta ,X\right) $,

\item if $\varphi \in \mathrm{Mor}_{\Theta ^{0}}\left( F_{\Theta }\left(
\delta _{\Theta ,X}\left( X\right) \right) ,F_{\Theta }\left( \delta
_{\Theta ,Y}\left( Y\right) \right) \right) $ then%
\begin{equation*}
\mathcal{D}_{\Delta }^{\Theta }\left( \varphi \right) =\varphi ^{\ast }\in 
\mathrm{Mor}_{\Delta ^{0}}\left( F_{\Delta }\left( \delta _{\Delta ,X}\left(
X\right) \right) ,F_{\Delta }\left( \delta _{\Delta ,Y}\left( Y\right)
\right) \right)
\end{equation*}%
such that $\delta _{\Delta ,Y}^{\Theta }\varphi =\varphi ^{\ast }\delta
_{\Delta ,X}^{\Theta }$.
\end{enumerate}

By Lemma \ref{un_hom} the $\mathcal{D}_{\Delta }^{\Theta }\left( \varphi
\right) $ exists and is well-defined.

\begin{corollary}
\label{functor}The mapping $\mathcal{D}_{\Delta }^{\Theta }$ is a functor.
\end{corollary}

\begin{proof}
For every $F_{\Theta }\left( \delta _{\Theta ,X}\left( X\right) \right) \in 
\mathrm{Ob}\Theta ^{0}$ we have that $\delta _{\Delta ,X}^{\Theta
}id_{F_{\Theta }\left( \delta _{\Theta ,X}\left( X\right) \right)
}=id_{F_{\Delta }\left( \delta _{\Delta ,X}\left( X\right) \right) }\delta
_{\Delta ,X}^{\Theta }$, so $\mathcal{D}_{\Delta }^{\Theta }\left(
id_{F_{\Theta }\left( \delta _{\Theta ,X}\left( X\right) \right) }\right)
=id_{\mathcal{D}_{\Delta }^{\Theta }\left( F_{\Theta }\left( \delta _{\Theta
,X}\left( X\right) \right) \right) }$.

If $F_{\Theta }\left( \delta _{\Theta ,X_{1}}\left( X_{1}\right) \right)
,F_{\Theta }\left( \delta _{\Theta ,X_{2}}\left( X_{2}\right) \right)
,F_{\Theta }\left( \delta _{\Theta ,X_{3}}\left( X_{3}\right) \right) \in 
\mathrm{Ob}\Theta ^{0}$ and%
\begin{equation*}
\varphi _{1}\in \mathrm{Mor}_{\Theta ^{0}}\left( F_{\Theta }\left( \delta
_{\Theta ,X_{1}}\left( X_{1}\right) \right) ,F_{\Theta }\left( \delta
_{\Theta ,X_{2}}\left( X_{2}\right) \right) \right) ,
\end{equation*}%
\begin{equation*}
\varphi _{2}\in \mathrm{Mor}_{\Theta ^{0}}\left( F_{\Theta }\left( \delta
_{\Theta ,X_{2}}\left( X_{2}\right) \right) ,F_{\Theta }\left( \delta
_{\Theta ,X_{3}}\left( X_{3}\right) \right) \right) ,
\end{equation*}%
then by Lemma \ref{un_hom} we have that $\delta _{\Delta ,X_{2}}^{\Theta
}\varphi _{1}=\mathcal{D}_{\Delta }^{\Theta }\left( \varphi _{1}\right)
\delta _{\Delta ,X_{1}}^{\Theta }$ and $\delta _{\Delta ,X_{3}}^{\Theta
}\varphi _{2}=\mathcal{D}_{\Delta }^{\Theta }\left( \varphi _{2}\right)
\delta _{\Delta ,X_{2}}^{\Theta }$. So%
\begin{equation*}
\delta _{\Delta ,X_{3}}^{\Theta }\varphi _{2}\varphi _{1}=\mathcal{D}%
_{\Delta }^{\Theta }\left( \varphi _{2}\right) \delta _{\Delta
,X_{2}}^{\Theta }\varphi _{1}=\mathcal{D}_{\Delta }^{\Theta }\left( \varphi
_{2}\right) \mathcal{D}_{\Delta }^{\Theta }\left( \varphi _{1}\right) \delta
_{\Delta ,X_{1}}^{\Theta }.
\end{equation*}%
Therefore $\mathcal{D}_{\Delta }^{\Theta }\left( \varphi _{2}\varphi
_{1}\right) =\mathcal{D}_{\Delta }^{\Theta }\left( \varphi _{2}\right) 
\mathcal{D}_{\Delta }^{\Theta }\left( \varphi _{1}\right) $.
\end{proof}

\setcounter{corollary}{0}

\begin{theorem}
\label{main}The variety $\Theta $ is an IBM-variety if for every $i\in
\Gamma $ there exists a subvariety $\Delta _{i}\subseteq \Theta $ such that $%
\Delta _{i}$ is an $i$-nondegenerate $i$-IBN-variety.
\end{theorem}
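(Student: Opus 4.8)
The plan is to reduce to a single sort and then pull the $i$-IBN property of $\Delta_i$ back to $\Theta$ through the functor $\mathcal{D}_{\Delta_i}^{\Theta}$. Since a variety is an IBN-variety exactly when it is $i$-IBN for every $i\in\Gamma$, I fix $i\in\Gamma$, take the subvariety $\Delta_i\subseteq\Theta$ furnished by the hypothesis (which is $i$-nondegenerate and $i$-IBN), and argue that $\Theta$ is $i$-IBN. So suppose $F_{\Theta}\left( Y\right) \cong F_{\Theta}\left( Z\right) $ with $Y=\biguplus_{j\in\Gamma}Y^{(j)}$ and $Z=\biguplus_{j\in\Gamma}Z^{(j)}$; the goal is $\left\vert Y^{(i)}\right\vert =\left\vert Z^{(i)}\right\vert $.

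First I would replace the two abstract free algebras by concrete ones of the form (\ref{free_alg_var}). By Proposition \ref{up_to_isomorphism} I can pick $X,W\subset X_{0}$ with $\left\vert X^{(j)}\right\vert =\left\vert Y^{(j)}\right\vert $ and $\left\vert W^{(j)}\right\vert =\left\vert Z^{(j)}\right\vert $ for every $j\in\Gamma$, and with $F_{\Theta}\left( Y\right) \cong F_{\Theta}\left( \delta_{\Theta,X}\left( X\right) \right) $ and $F_{\Theta}\left( Z\right) \cong F_{\Theta}\left( \delta_{\Theta,W}\left( W\right) \right) $. Combining with the assumed isomorphism yields $F_{\Theta}\left( \delta_{\Theta,X}\left( X\right) \right) \cong F_{\Theta}\left( \delta_{\Theta,W}\left( W\right) \right) $, an isomorphism between objects on which the functor $\mathcal{D}_{\Delta_i}^{\Theta}$ is defined.

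Next I would apply $\mathcal{D}_{\Delta_i}^{\Theta}$, which is a functor by Corollary \ref{functor}. Any functor carries isomorphisms to isomorphisms (if $\varphi$ is invertible then $\mathcal{D}_{\Delta_i}^{\Theta}\left( \varphi^{-1}\right) $ is a two-sided inverse of $\mathcal{D}_{\Delta_i}^{\Theta}\left( \varphi\right) $, using functoriality on $\varphi\varphi^{-1}$ and $\varphi^{-1}\varphi$); and on objects $\mathcal{D}_{\Delta_i}^{\Theta}\left( F_{\Theta}\left( \delta_{\Theta,X}\left( X\right) \right) \right) =F_{\Delta_i}\left( \delta_{\Delta_i,X}\left( X\right) \right) $. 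Hence $F_{\Delta_i}\left( \delta_{\Delta_i,X}\left( X\right) \right) \cong F_{\Delta_i}\left( \delta_{\Delta_i,W}\left( W\right) \right) $ in $\Delta_i^{0}$. Since $\Delta_i$ is $i$-IBN, the sort-$i$ generating sets of these two $\Delta_i$-free algebras have equal cardinality, that is $\left\vert \delta_{\Delta_i,X}\left( X\right) ^{(i)}\right\vert =\left\vert \delta_{\Delta_i,W}\left( W\right) ^{(i)}\right\vert $. Finally, because $\Delta_i$ is $i$-nondegenerate, Proposition \ref{X_injection} applied to $\Delta_i$ tells us that $\delta_{\Delta_i,X}$ is injective on $X^{(i)}$, so $\left\vert \delta_{\Delta_i,X}\left( X\right) ^{(i)}\right\vert =\left\vert X^{(i)}\right\vert $, and likewise $\left\vert \delta_{\Delta_i,W}\left( W\right) ^{(i)}\right\vert =\left\vert W^{(i)}\right\vert $. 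Chaining the equalities gives $\left\vert Y^{(i)}\right\vert =\left\vert X^{(i)}\right\vert =\left\vert W^{(i)}\right\vert =\left\vert Z^{(i)}\right\vert $, and since $i$ was arbitrary $\Theta$ is an IBN-variety.

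I expect the only delicate point to be the passage through the functor: one must check that $\mathcal{D}_{\Delta_i}^{\Theta}$ is available on the (possibly not finitely generated) free algebras in play and that its object assignment is exactly $F_{\Theta}\left( \delta_{\Theta,X}\left( X\right) \right) \mapsto F_{\Delta_i}\left( \delta_{\Delta_i,X}\left( X\right) \right) $, so that the sort-$i$ generators behave as claimed; the constructions behind Lemma \ref{un_hom} and Corollary \ref{functor} are stated for arbitrary $X$, so this should cause no trouble. Everything else is cardinal bookkeeping. The conceptual heart is that $i$-nondegeneracy of $\Delta_i$ lets Proposition \ref{X_injection} recover the true number of sort-$i$ free generators from the free algebra itself, which is precisely what allows the $i$-IBN property to be transferred from $\Delta_i$ up to $\Theta$.
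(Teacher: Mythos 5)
Your proof is correct and follows essentially the same route as the paper's: apply the functor $\mathcal{D}_{\Delta _{i}}^{\Theta }$ to the given isomorphism of free $\Theta $-algebras, invoke the $i$-IBN property of $\Delta _{i}$ on the resulting isomorphism in $\Delta _{i}^{0}$, and recover $\left\vert X^{\left( i\right) }\right\vert $ from $\left\vert \left( \delta _{\Delta _{i},X}\left( X\right) \right) ^{\left( i\right) }\right\vert $ via $i$-nondegeneracy and Proposition \ref{X_injection}. The only cosmetic difference is that you normalize the two free algebras to the form (\ref{free_alg_var}) via Proposition \ref{up_to_isomorphism} at the outset and then chain cardinalities back to the original generating sets, whereas the paper works with the canonical form throughout and closes by using nondegeneracy of $\Theta $ to pass from $\left\vert X^{\left( i\right) }\right\vert =\left\vert Y^{\left( i\right) }\right\vert $ to the cardinalities of the images $\delta _{\Theta ,X}\left( X^{\left( i\right) }\right) $ and $\delta _{\Theta ,Y}\left( Y^{\left( i\right) }\right) $.
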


\begin{proof}
The variety $\Theta $, which fulfills the condition of this theorem, is a
nondegenerate variety by Claim \ref{cl}, item \ref{c_2}.

We consider $F_{\Theta }\left( \delta _{\Theta ,X}\left( X\right) \right)
,F_{\Theta }\left( \delta _{\Theta ,Y}\left( Y\right) \right) \in \mathrm{Ob}%
\Theta ^{0}$ such that $F_{\Theta }\left( \delta _{\Theta ,X}\left( X\right)
\right) \cong F_{\Theta }\left( \delta _{\Theta ,Y}\left( Y\right) \right) $%
. The functor $\mathcal{D}_{\Delta _{i}}^{\Theta }$ transforms this
isomorphism to the isomorphism $F_{\Delta _{i}}\left( \delta _{\Delta
_{i},X}\left( X\right) \right) \cong F_{\Delta _{i}}\left( \delta _{\Delta
_{i},Y}\left( Y\right) \right) $. $\Delta _{i}$ is an $i$-IBN-variety,
so\linebreak $\left\vert \left( \delta _{\Delta _{i},X}\left( X\right)
\right) ^{\left( i\right) }\right\vert =\left\vert \left( \delta _{\Delta
_{i},Y}\left( Y\right) \right) ^{\left( i\right) }\right\vert $. $\delta
_{\Delta _{i},X}$ and $\delta _{\Delta _{i},Y}$ are homomorphisms, hence, we
have by (\ref{hom_1}) that $\left( \delta _{\Delta _{i},X}\left( X\right)
\right) ^{\left( i\right) }=\left( \delta _{\Delta _{i},X}\left( X^{\left(
i\right) }\right) \right) $, $\left( \delta _{\Delta _{i},Y}\left( Y\right)
\right) ^{\left( i\right) }=\left( \delta _{\Delta _{i},Y}\left( Y^{\left(
i\right) }\right) \right) $. $\Delta _{i}$ is an $i$-nondegenerate,
therefore, by Proposition \ref{X_injection}, $\left\vert X^{\left( i\right)
}\right\vert =\left\vert Y^{\left( i\right) }\right\vert $. $\Theta $ is a
nondegenerate variety, so $\left\vert \delta _{\Theta ,X}\left( X^{\left(
i\right) }\right) \right\vert =\left\vert \delta _{\Theta ,Y}\left(
Y^{\left( i\right) }\right) \right\vert $ and, because $\delta _{\Theta ,X}$
and $\delta _{\Theta ,Y}$ are homomorphisms $\left\vert \left( \delta
_{\Theta ,X}\left( X\right) \right) ^{\left( i\right) }\right\vert
=\left\vert \left( \delta _{\Theta ,Y}\left( Y\right) \right) ^{\left(
i\right) }\right\vert $. This equality holds for every $i\in \Gamma $, hence 
$\Theta $ is an IBM-variety.
\end{proof}

Now we will demonstrate the application of this theorem to some varieties of
one-sorted and many-sorted algebras. In the following examples we will
consider wide enough classes of varieties of algebras. For arbitrary variety 
$\Theta $ from theses wide classes and for every name of sort $i\in \Gamma $
of elements of theses algebras we will define by some identity a variety $%
\Delta _{i}$ which will by an $i$-IBN-variety. If for every $i\in \Gamma $
the varieties will be $i$-nondegenerate we can conclude that the variety $%
\Theta $ is an IBN variety.

\begin{example}
\label{lin_alg}A nondegenerate variety of linear algebras over a fixed field 
$k$.
\end{example}

Linear algebras are the vector spaces with an additional binary operation
called multiplication, which is a bilinear. We consider a nondegenerate
variety $\Theta $ of linear algebras over a fixed field $k$. We take a
subvariety $\Delta \subseteq \Theta $ defined in $\Theta $ by identity%
\begin{equation}
x_{1}x_{2}=0,  \label{null_mul}
\end{equation}%
i.e., subvariety of linear algebras with null multiplication. We will prove

\begin{proposition}
\label{nondeg_linalg}The variety $\Delta $ is a nondegenerate variety.
\end{proposition}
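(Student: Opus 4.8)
The plan is to produce a single algebra of $\Delta$ with more than one element. By Proposition \ref{les2} (and its converse, since an algebra all of whose elements coincide satisfies $x_{1}=x_{2}$), a one-sorted variety is nondegenerate if and only if it contains an algebra of cardinality $\geq 2$. The natural candidate is the one-dimensional space $N_{1}=kx_{1}$ equipped with the zero multiplication: it has $|k|\geq 2$ elements and it satisfies the identity (\ref{null_mul}), so it lies in $\Delta$ as soon as it lies in $\Theta$. Thus the entire statement reduces to showing that $N_{1}\in\Theta$, i.e.\ that $N_{1}$ satisfies every identity of $\Theta$.

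To analyze which identities $N_{1}$ satisfies, I would first pass to homogeneous defining identities. Using the bilinearity of the multiplication together with the standard linearization argument over the field $k$, every identity of $\Theta$ is a consequence of its homogeneous components relative to the degree grading of the free linear algebra; hence $\mathfrak{I}(\Theta)$ may be taken to be a homogeneous ideal, and the one-generated free algebra inherits a grading $F_{\Theta}(x_{1})=\bigoplus_{d\geq 1}F_{d}$ with degree-one component $F_{1}=kx_{1}$.

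The key step is then that $J=\bigoplus_{d\geq 2}F_{d}$ is an ideal of $F_{\Theta}(x_{1})$: since multiplication sends $F_{a}\times F_{b}$ into $F_{a+b}$ with $a+b\geq 2$ whenever $a,b\geq 1$, every product lands in $J$, and $J$ absorbs products from either side. Consequently the quotient $F_{\Theta}(x_{1})/J$ is a homomorphic image of the $\Theta$-algebra $F_{\Theta}(x_{1})$, hence again belongs to $\Theta$; as a vector space it is $F_{1}=kx_{1}$, and its induced multiplication is zero because $F_{1}\cdot F_{1}\subseteq F_{2}\subseteq J$. This quotient is nonzero because $\Theta$ is nondegenerate, which forces the generator $x_{1}$ to be nonzero in $F_{\Theta}(x_{1})$ (otherwise the unique homomorphism to each $\Theta$-algebra would kill every element, making $\Theta$ degenerate). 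Therefore $F_{\Theta}(x_{1})/J\cong N_{1}$ lies in $\Theta$, satisfies (\ref{null_mul}), and so $N_{1}\in\Delta$, proving $\Delta$ nondegenerate.

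The main obstacle is the homogenization step of the second paragraph, which is exactly where the hypothesis on $k$ enters: over an infinite field the Vandermonde/linearization argument applies verbatim and guarantees the grading of $F_{\Theta}(x_{1})$, whence $x_{1}\notin\big(F_{\Theta}(x_{1})\big)^{2}$, the ideal generated by all products. In essence the whole proof collapses to verifying that $x_{1}\notin\big(F_{\Theta}(x_{1})\big)^{2}$, and it is precisely the homogeneity of the relations that prevents the degree-one generator from being swallowed by the higher-degree part; this is the point that would require additional care if $k$ were allowed to be finite.
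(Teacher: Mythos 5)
Your proof is correct and follows essentially the same route as the paper: both arguments rest on the (multi)homogeneity of the verbal ideal of $\Theta$ to conclude that no free generator is absorbed into the square of the free algebra, and then exhibit the resulting nonzero null-multiplication quotient as an algebra of $\Delta$, invoking Proposition \ref{les2} to conclude nondegeneracy. The only difference is packaging --- the paper works upstairs in the absolutely free linear algebra $F(Y)$ and shows $I(Y)\subseteq \left( F(Y)\right) ^{2}$ via an endomorphism killing all but one generator, citing \cite[Ch. VII, 3.1, Proposition 2]{BahturinStruct} for the homogeneity that you obtain by linearization, so your caveat about finite $k$ applies equally to the paper's argument.
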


\begin{proof}
The variety of all linear algebras we denote by $\Lambda $. We consider some
free algebra $F_{\Theta }\left( \delta _{\Theta ,X}\left( X\right) \right)
=F $ of the variety $\Theta $. This algebra is isomorphic to the quotient
algebra $F\left( Y\right) /I\left( Y\right) $, where $Y=\delta _{\Lambda
,X}\left( X\right) $, $F\left( Y\right) =F_{\Lambda }\left( \delta _{\Lambda
,X}\left( X\right) \right) $ is an absolutely free linear algebra generate
by set of generators $Y$, $I\left( Y\right) =\mathfrak{I}_{\Lambda }\left(
\Theta ,X\right) $ is a multihomogeneous ideal of $F\left( Y\right) $. The
proof of this fact see, for example, in \cite[Ch. VII, 3.1, Proposition 2]%
{BahturinStruct}. We suppose that $I\left( Y\right) $ contains generators of
degree $1$, i.e., $\lambda _{i_{1}}y_{i_{1}}+\ldots +\lambda
_{i_{n}}y_{i_{n}}\in I\left( Y\right) $, where $y_{i_{1}},\ldots
,y_{i_{n}}\in Y$, $\lambda _{i_{1}},\ldots ,\lambda _{i_{n}}\in k\setminus
\left\{ 0\right\} $. There exists an endomorphism $\alpha $ of $F\left(
Y\right) $ such that $\alpha \left( y_{i_{1}}\right) =y_{i_{1}}$, $\alpha
\left( y_{i_{2}}\right) =\ldots =\alpha \left( y_{i_{n}}\right) =0$. By
Proposition \ref{rel_hom} we have that $\alpha \left( \lambda
_{i_{1}}y_{i_{1}}+\ldots +\lambda _{i_{n}}y_{i_{n}}\right) =\lambda
_{i_{1}}y_{i_{1}}\in I\left( Y\right) $, so $y_{i_{1}}\in I\left( Y\right) $
and $\Theta $ is a degenerate variety. From this contradiction we conclude
that $I\left( Y\right) \subseteq \left( F\left( Y\right) \right) ^{2}$. By
Claim \ref{cl} item \ref{c_8} we have that $F_{\Delta }\left( \delta
_{\Theta ,\Delta }\left( X\right) \right) \cong F\left( Y\right) /\left(
F\left( Y\right) \right) ^{2}$, i.e., this is a vector space with the basis $%
Y$ equipped by null multiplication. From Proposition \ref{les2} we conclude
that $\Delta $ is a nondegenerate variety.
\end{proof}

Every algebra of the subvariety $\Delta $ is a vector space equipped by null
multiplication. Every homomorphism of algebras of this subvariety are linear
map. So, as in Example \ref{vector_space}, every algebra of this subvariety
is a free algebras and the vectors of the basis of this vector space are
free generators. Hence, as in Example \ref{vector_space}, the subvariety $%
\Delta $ has the IBN property. Therefore by Theorem \ref{main} the variety $%
\Theta $ has the IBN property.

\begin{example}
\label{semi}(A. Sivatski) A nondegenerate variety of semigroups.
\end{example}

Let $\Theta $ is a nondegenerate variety of semigroups. We consider a
subvariety $\Delta \subseteq \Theta $ defined in $\Theta $ by identity%
\begin{equation}
x_{1}x_{2}=x_{1}.  \label{l_mult}
\end{equation}%
We suppose that the variety $\Delta $ is a nondegenerate variety. Then the
free semigroup $F_{\Delta }\left( X\right) $ of the subvariety $\Delta $ is
a set $X$ with the multiplication defined by identity (\ref{l_mult}) and
every mapping of sets from $F_{\Delta }\left( X\right) =X$ to $F_{\Delta
}\left( Y\right) =Y$ $\ $is a homomorphism of these semigroups. Therefore,
by Example \ref{sets}, if $F_{\Delta }\left( X\right) \cong F_{\Delta
}\left( Y\right) $ then $\left\vert X\right\vert =\left\vert Y\right\vert $
and the variety $\Theta $ has the IBN property.

If $\Theta $ is a nondegenerate variety of commutative\ semigroups then the
subvariety defined in $\Theta $ by identity (\ref{l_mult}) is a degenerate
variety, because from the identities $x_{1}x_{2}=x_{2}x_{1}$ and $%
x_{1}x_{2}=x_{1}$ we conclude the identity $x_{1}=x_{2}$. So we need use
another approach for the proving of IBN property of varieties of
commutative\ semigroups.

\begin{example}
\label{comm_semi}A nondegenerate variety of commutative semigroups.
\end{example}

Let $\Theta $ is a nondegenerate variety of commutative semigroups. We
consider a subvariety $\Delta \subseteq \Theta $ defined in $\Theta $ by
identity%
\begin{equation*}
x^{2}=x.
\end{equation*}%
We suppose that the variety $\Delta $ is a nondegenerate variety. We
consider a finite set $X=\left\{ x_{1},\ldots ,x_{n}\right\} $ and a
semigroup $F_{\Delta }\left( X\right) $ which is free semigroup in the
subvariety $\Delta $ and generated by the set $X$ of free generators. Every
element of $F_{\Delta }\left( X\right) $ can be presented in the form $%
x_{i_{1}}x_{i_{2}}\ldots x_{i_{m}}$, where $1\leq m\leq n$, $%
i_{1}<i_{2}<\ldots <i_{m}$. Hence $\left\vert F_{\Delta }\left( X\right)
\right\vert <\infty $ and by Theorem \ref{Fuji} the variety $\Theta $ is an
IBM-variety.

\section{Examples of varieties of $2$-sorted algebras}

\setcounter{equation}{0}

Now we will consider three examples of varieties of $2$-sorted algebras
i.e., such that $\Gamma =\left\{ 1,2\right\} $. The studying of the IBM
properties of these varieties was started in \cite{Katsov}. Here we present
an approach that is more general and uniform. So the considerations in all
of these examples will be very similar.

A signature $\Omega $ of every algebra which we will study in these examples
fulfills the

\begin{condition}
\label{act_separ}A signature $\Omega $ is separated into three classes of
algebraic operations: $\Omega =\Omega ^{\left( 1\right) }\uplus \Omega
^{\left( 2\right) }\uplus \Omega _{a}$.The operations from the first class $%
\Omega ^{\left( 1\right) }$ give results in the first sort of algebras and
have arguments also only from the first sort. The operations from the second
class $\Omega ^{\left( 2\right) }$ give results in the second sort of
algebras and have arguments also only from the second sort. And the third
class $\Omega _{a}$ contain only one binary operation which we call "action"
and denote by $\circ $, this operation has type $\tau _{\circ }=\left(
1,2;2\right) $.
\end{condition}

Now we will prove some proprieties of algebras which signatures have such
separation.

We consider some signature which fulfills Condition \ref{act_separ}, some
set $X=X^{\left( 1\right) }\uplus X^{\left( 2\right) }\subset X_{0}$ such
that $\left\vert X\right\vert <\infty $ and algebra of terms $\mathfrak{T}%
\left( \left\{ 1,2\right\} ,\Omega ,X\right) =\mathfrak{T}\left( X\right) $.

\begin{proposition}
\label{terms_1_sort}The equality $\mathfrak{T}\left( X\right) ^{\left(
1\right) }=\mathfrak{T}\left( \left\{ 1\right\} ,\Omega ^{\left( 1\right)
},X^{\left( 1\right) }\right) $ holds.
\end{proposition}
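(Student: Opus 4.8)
The plan is to prove the equality as two inclusions. The inclusion $\mathfrak{T}\left(\left\{1\right\},\Omega^{\left(1\right)},X^{\left(1\right)}\right)\subseteq\mathfrak{T}\left(X\right)^{\left(1\right)}$ is the easy one: the generators of $X^{\left(1\right)}$ lie in $X$, the operations of $\Omega^{\left(1\right)}$ lie in $\Omega$ and all produce results of sort $1$, so every term built over the alphabet $X^{\left(1\right)}$ using only operations of $\Omega^{\left(1\right)}$ is literally a term of $\mathfrak{T}\left(X\right)$ of sort $1$. I would make this precise by a trivial induction by construction, observing that the construction of $\mathfrak{T}\left(\left\{1\right\},\Omega^{\left(1\right)},X^{\left(1\right)}\right)$ is a sub-construction of that of $\mathfrak{T}\left(X\right)$.

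The substantive inclusion is $\mathfrak{T}\left(X\right)^{\left(1\right)}\subseteq\mathfrak{T}\left(\left\{1\right\},\Omega^{\left(1\right)},X^{\left(1\right)}\right)$, which I would prove by induction on the length $l\left(t\right)$ of a term $t\in\mathfrak{T}\left(X\right)^{\left(1\right)}$, using the length function defined in the proof of Theorem \ref{terms}. For the base case $l\left(t\right)=0$ the term $t$ is either a generator or a constant of the signature; since $\eta_{\mathfrak{T}\left(X\right)}\left(t\right)=1$, a generator must belong to $X^{\left(1\right)}$, while a constant (a nullary operation with result in the first sort) must belong to $\Omega^{\left(1\right)}$ by Condition \ref{act_separ}. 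In either case $t\in\mathfrak{T}\left(\left\{1\right\},\Omega^{\left(1\right)},X^{\left(1\right)}\right)$.

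For the inductive step, suppose $t\in\mathfrak{T}\left(X\right)^{\left(1\right)}$ has $l\left(t\right)=n>0$, so $t=\omega\left(t_{1},\ldots,t_{r}\right)$ with $\omega\in\Omega$ of positive arity $r$ and $l\left(t_{k}\right)<n$ for each $k$. The key observation—which I expect to be the heart of the argument—is that because $t$ has sort $1$, the result sort of $\omega$ must be $1$; by Condition \ref{act_separ} the only operations whose result lies in the first sort are those of $\Omega^{\left(1\right)}$, since the operations of $\Omega^{\left(2\right)}$ and the action $\circ$ both produce sort $2$. Hence $\omega\in\Omega^{\left(1\right)}$, and because operations of $\Omega^{\left(1\right)}$ take their arguments only from the first sort, each $t_{k}\in\mathfrak{T}\left(X\right)^{\left(1\right)}$. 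By the induction hypothesis each $t_{k}$ lies in $\mathfrak{T}\left(\left\{1\right\},\Omega^{\left(1\right)},X^{\left(1\right)}\right)$, and since $\omega\in\Omega^{\left(1\right)}$ the term $t=\omega\left(t_{1},\ldots,t_{r}\right)$ lies there as well.

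The only point requiring care is the handling of nullary operations in the base case and the explicit verification that Condition \ref{act_separ} genuinely forbids any sort-$1$ output from $\Omega^{\left(2\right)}$ or from $\circ$; once this sorting constraint is recorded, the induction closes immediately and no computation is needed.
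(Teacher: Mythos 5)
Your proposal is correct and follows essentially the same route as the paper: both inclusions are handled the same way, and the key step — that a term of sort $1$ must have an outermost operation whose result sort is $1$, which by Condition \ref{act_separ} forces $\omega \in \Omega^{\left( 1\right) }$ and hence all arguments of sort $1$ — is exactly the paper's argument. The only cosmetic difference is that you organize the induction by the length function $l$ from Theorem \ref{terms} rather than by ``induction by construction,'' which amounts to the same structural induction.
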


\begin{proof}
We will prove the inclusion $\mathfrak{T}\left( X\right) ^{\left( 1\right)
}\subseteq \mathfrak{T}\left( \left\{ 1\right\} ,\Omega ^{\left( 1\right)
},X^{\left( 1\right) }\right) $ by induction by construction. The elements
of the set $X^{\left( 1\right) }$ and all constants of the first sort are
elements of $\mathfrak{T}\left( \left\{ 1\right\} ,\Omega ^{\left( 1\right)
},X^{\left( 1\right) }\right) $. This is the induction basis. Now we will
make the induction step. We consider a term $\omega \left( t_{1},\ldots
,t_{n}\right) \in \mathfrak{T}\left( X\right) ^{\left( 1\right) }$, where $%
t_{1},\ldots ,t_{n}\in \mathfrak{T}\left( X\right) $. So $\tau _{\omega
}=\left( i_{1},\ldots ,i_{n};1\right) $, where $i_{1},\ldots ,i_{n}\in
\left\{ 1,2\right\} $. Our signature fulfills Condition \ref{act_separ},
hence $i_{1}=\ldots =i_{n}=1$. Therefore $\omega \in \Omega ^{\left(
1\right) }$ and $t_{1},\ldots ,t_{n}\in \mathfrak{T}\left( X\right) ^{\left(
1\right) }$. By the induction hypothesis $t_{1},\ldots ,t_{n}\in \mathfrak{T}%
\left( \left\{ 1\right\} ,\Omega ^{\left( 1\right) },X^{\left( 1\right)
}\right) $, so $\omega \left( t_{1},\ldots ,t_{n}\right) \in \mathfrak{T}%
\left( \left\{ 1\right\} ,\Omega ^{\left( 1\right) },X^{\left( 1\right)
}\right) $.

We can prove the inclusion $\mathfrak{T}\left( \left\{ 1\right\} ,\Omega
^{\left( 1\right) },X^{\left( 1\right) }\right) \subseteq \mathfrak{T}\left(
X\right) ^{\left( 1\right) }$ also by induction by construction even simpler
than previous inclusion.
\end{proof}

Now we consider some variety $\Theta $ of algebras which fulfill Condition %
\ref{act_separ}. The inclusion%
\begin{equation*}
\mathfrak{I}\left( \Theta ,X\right) \subseteq \left( \mathfrak{T}\left(
X\right) ^{\left( 1\right) }\times \mathfrak{T}\left( X\right) ^{\left(
1\right) }\right) \uplus \left( \mathfrak{T}\left( X\right) ^{\left(
2\right) }\times \mathfrak{T}\left( X\right) ^{\left( 2\right) }\right) 
\end{equation*}%
holds, because $\mathfrak{I}\left( \Theta ,X\right) $ is a congruence in $%
\mathfrak{T}\left( X\right) $. The intersection $\mathfrak{I}\left( \Theta
,X\right) \cap \left( \mathfrak{T}\left( X\right) ^{\left( 1\right) }\times 
\mathfrak{T}\left( X\right) ^{\left( 1\right) }\right) $ which we denote by $%
\left( I\left( \Theta ,X\right) \right) ^{\left( 1\right) }$ is a congruence
in $\mathfrak{T}\left( X\right) ^{\left( 1\right) }=\mathfrak{T}\left(
\left\{ 1\right\} ,\Omega ^{\left( 1\right) },X^{\left( 1\right) }\right) $,
so elements of $\left( I\left( \Theta ,X\right) \right) ^{\left( 1\right) }$
can be considered as identities in the class of one-sorted algebras with
signature $\Omega ^{\left( 1\right) }$. We denote an union $%
\bigcup\limits_{X\subset X_{0},\left\vert X\right\vert <\infty }\left(
I\left( \Theta ,X\right) \right) ^{\left( 1\right) }$ by $\left( I\left(
\Theta \right) \right) ^{\left( 1\right) }$. If we considered elements of $%
\left( I\left( \Theta \right) \right) ^{\left( 1\right) }$ as identities of
one-sorted algebras with the signature $\Omega ^{\left( 1\right) }$, then
they define the variety of one-sorted algebras with signature $\Omega
^{\left( 1\right) }$, which we denote by $\Theta ^{\left( 1\right) }$.

\begin{proposition}
\label{(1)}If $A=A^{\left( 1\right) }\uplus A^{\left( 2\right) }$ is an
arbitrary algebra of the variety $\Theta $, then $A^{\left( 1\right) }\in
\Theta ^{\left( 1\right) }$.
\end{proposition}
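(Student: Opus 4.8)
The plan is to verify directly that $A^{\left( 1\right) }$, regarded as a one-sorted algebra of signature $\Omega ^{\left( 1\right) }$, satisfies every identity of $\left( I\left( \Theta \right) \right) ^{\left( 1\right) }$. First I would check that $A^{\left( 1\right) }$ genuinely is an $\Omega ^{\left( 1\right) }$-algebra: by Condition \ref{act_separ} every $\omega \in \Omega ^{\left( 1\right) }$ has type $\left( 1,\ldots ,1;1\right) $, so it restricts to a map $\left( A^{\left( 1\right) }\right) ^{n}\rightarrow A^{\left( 1\right) }$, whence $A^{\left( 1\right) }$ is closed under all operations of $\Omega ^{\left( 1\right) }$. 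Since $\Theta ^{\left( 1\right) }=Var\left( \left( I\left( \Theta \right) \right) ^{\left( 1\right) }\right) $, it then suffices to show $A^{\left( 1\right) }\models \left( f_{1}=f_{2}\right) $ for each $\left( f_{1}=f_{2}\right) \in \left( I\left( \Theta \right) \right) ^{\left( 1\right) }$.

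So fix such an identity. By definition it lies in $\left( I\left( \Theta ,X\right) \right) ^{\left( 1\right) }=\mathfrak{I}\left( \Theta ,X\right) \cap \left( \mathfrak{T}\left( X\right) ^{\left( 1\right) }\times \mathfrak{T}\left( X\right) ^{\left( 1\right) }\right) $ for some finite $X$, and by Proposition \ref{terms_1_sort} we have $f_{1},f_{2}\in \mathfrak{T}\left( \left\{ 1\right\} ,\Omega ^{\left( 1\right) },X^{\left( 1\right) }\right) $; in particular $f_{1},f_{2}$ are built solely from the variables of $X^{\left( 1\right) }$ and the operations of $\Omega ^{\left( 1\right) }$. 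To test $A^{\left( 1\right) }\models \left( f_{1}=f_{2}\right) $ I take an arbitrary one-sorted homomorphism $\varphi :\mathfrak{T}\left( \left\{ 1\right\} ,\Omega ^{\left( 1\right) },X^{\left( 1\right) }\right) \rightarrow A^{\left( 1\right) }$ and must show $\varphi \left( f_{1}\right) =\varphi \left( f_{2}\right) $. The idea is to promote the underlying assignment $\varphi _{\mid X^{\left( 1\right) }}:X^{\left( 1\right) }\rightarrow A^{\left( 1\right) }$ to a two-sorted assignment $X\rightarrow A$ (extending it on $X^{\left( 2\right) }$), which by Proposition \ref{free_empty} yields a homomorphism $\psi :\mathfrak{T}\left( X\right) \rightarrow A$. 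Because $A\in \Theta $ and $\left( f_{1}=f_{2}\right) \in \mathfrak{I}\left( \Theta ,X\right) $, we obtain $\psi \left( f_{1}\right) =\psi \left( f_{2}\right) $. Finally, since $f_{1},f_{2}$ are first-sort terms in the variables $X^{\left( 1\right) }$ only, their evaluation under $\psi $ uses only $\psi _{\mid X^{\left( 1\right) }}=\varphi _{\mid X^{\left( 1\right) }}$ together with the $\Omega ^{\left( 1\right) }$-operations of $A$, which coincide with those of the reduct $A^{\left( 1\right) }$; hence $\psi \left( f_{i}\right) =\varphi \left( f_{i}\right) $ and $\varphi \left( f_{1}\right) =\varphi \left( f_{2}\right) $, as required.

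The step that needs genuine care, and which I expect to be the main obstacle, is precisely the promotion of $\varphi _{\mid X^{\left( 1\right) }}$ to an honest two-sorted homomorphism $\psi :\mathfrak{T}\left( X\right) \rightarrow A$, since Proposition \ref{free_empty} requires every sort occurring in $X$ to be realized in $A$, i.e. $\Gamma _{X}\subseteq \Gamma _{A}$. When $A^{\left( 2\right) }\neq \varnothing $ one simply sends the variables of $X^{\left( 2\right) }$ to arbitrary elements of $A^{\left( 2\right) }$ and the argument closes at once. The delicate case is $A^{\left( 2\right) }=\varnothing $ with $X^{\left( 2\right) }\neq \varnothing $, where no such $\psi $ exists, so that the identity $\left( f_{1}=f_{2}\right) $ is satisfied by $A$ only vacuously over this $X$; here one must argue separately, exploiting that $f_{1},f_{2}$ contain no variable of the second sort and that the relevant first-sort evaluations are governed entirely by the $\Omega ^{\left( 1\right) }$-structure, and I would verify this boundary situation by hand. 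Apart from this point the proof is a direct definition-chase resting on Proposition \ref{terms_1_sort}: the value of a first-sort term is the same whether it is computed inside the two-sorted algebra $A$ or inside its one-sorted reduct $A^{\left( 1\right) }$.
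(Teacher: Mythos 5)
Your main line is exactly the paper's: reduce to checking each identity of $\left( I\left( \Theta \right) \right) ^{\left( 1\right) }$, note via Proposition \ref{terms_1_sort} that its terms lie in $\mathfrak{T}\left( \left\{ 1\right\} ,\Omega ^{\left( 1\right) },X^{\left( 1\right) }\right) $, lift a one-sorted homomorphism $\varphi $ into $A^{\left( 1\right) }$ to a two-sorted homomorphism $\psi :\mathfrak{T}\left( X\right) \rightarrow A$ by Proposition \ref{free_empty}, invoke $A\models \left( f_{1}=f_{2}\right) $, and observe that first-sort terms evaluate the same way in $A$ and in its $\Omega ^{\left( 1\right) }$-reduct. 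Up to that point the two arguments coincide.

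The difference lies in the ``delicate case'' you flag and then defer, and that deferral is a genuine gap as written. The paper never meets this case: having observed that $f_{1},f_{2}$ involve only first-sort variables, it takes the variable set of the identity to be $X=X^{\left( 1\right) }$ (it explicitly writes ``all elements of the set $X$ are elements of the first sort, so $X=X^{\left( 1\right) }$''), so the lift supplied by Proposition \ref{free_empty} only needs $1\in \Gamma _{A}$ --- automatic whenever there is a homomorphism $\varphi $ to test --- and $A^{\left( 2\right) }$ never enters. You instead keep the ambient $X$ with $X^{\left( 2\right) }$ possibly nonempty and promise to handle $A^{\left( 2\right) }=\varnothing $ ``by hand.'' That cannot be done directly: in that case $\mathrm{Hom}\left( \mathfrak{T}\left( X\right) ,A\right) =\varnothing $, so the hypothesis that $A$ satisfies $\left( f_{1}=f_{2}\right) $ over $X$ is vacuous and yields no information about $A^{\left( 1\right) }$; the only way out is to first replace the identity by the same pair of terms regarded over the smaller variable set $X^{\left( 1\right) }$, which is precisely the step the paper takes and your write-up omits. (This is the standard empty-carrier subtlety of many-sorted algebra; to be fully rigorous one should either adopt the convention that an identity's variable set is the set of variables actually occurring in its terms, or justify explicitly that a pair in $\mathfrak{I}\left( \Theta ,X\right) \cap \left( \mathfrak{T}\left( X\right) ^{\left( 1\right) }\times \mathfrak{T}\left( X\right) ^{\left( 1\right) }\right) $ may be re-read over $X^{\left( 1\right) }$. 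With that convention in place, your argument closes and agrees with the paper's.)
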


\begin{proof}
By Condition \ref{act_separ} the set $A^{\left( 1\right) }$ is a one-sorted
algebra with signature $\Omega ^{\left( 1\right) }$. We consider some
identity%
\begin{equation}
\left( f_{1}\left( x_{1},\ldots ,x_{r}\right) =f_{2}\left( x_{1},\ldots
,x_{r}\right) \right) \in \left( I\left( \Theta \right) \right) ^{\left(
1\right) }.  \label{r_id}
\end{equation}%
By Proposition \ref{terms_1_sort} all elements of the set $X=\left\{
x_{1},\ldots ,x_{r}\right\} $ are elements of the first sort, so $%
X=X^{\left( 1\right) }=\left\{ x_{1}^{\left( 1\right) },\ldots
,x_{r}^{\left( 1\right) }\right\} $. We consider some homomorphism $\psi :%
\mathfrak{T}\left( \left\{ 1\right\} ,\Omega ^{\left( 1\right) },X^{\left(
1\right) }\right) \rightarrow A^{\left( 1\right) }$. By Proposition \ref%
{free_empty} there exists a homomorphism $\varphi :\mathfrak{T}\left(
X\right) \rightarrow A$, such that $\varphi \left( x_{j}^{\left( 1\right)
}\right) =\psi \left( x_{j}^{\left( 1\right) }\right) $, $1\leq j\leq r$. $%
A\in \Theta $, so $A$ fulfills the identity (\ref{r_id}) as identity of $2$%
-sorted algebras with signature $\Omega $. Therefore these equalities
fulfill:%
\begin{equation*}
\varphi \left( f_{1}\left( x_{1}^{\left( 1\right) },\ldots ,x_{r}^{\left(
1\right) }\right) \right) =\varphi \left( f_{2}\left( x_{1}^{\left( 1\right)
},\ldots ,x_{r}^{\left( 1\right) }\right) \right) ,
\end{equation*}%
\begin{equation*}
f_{1}\left( \varphi \left( x_{1}^{\left( 1\right) }\right) ,\ldots ,\varphi
\left( x_{r}^{\left( 1\right) }\right) \right) =f_{2}\left( \varphi \left(
x_{1}^{\left( 1\right) }\right) ,\ldots ,\varphi \left( x_{r}^{\left(
1\right) }\right) \right) ,
\end{equation*}
\begin{equation*}
f_{1}\left( \psi \left( x_{1}^{\left( 1\right) }\right) ,\ldots ,\psi \left(
x_{r}^{\left( 1\right) }\right) \right) =f_{2}\left( \psi \left(
x_{1}^{\left( 1\right) }\right) ,\ldots ,\psi \left( x_{r}^{\left( 1\right)
}\right) \right) .
\end{equation*}%
$f_{1}\left( x_{1},\ldots ,x_{r}\right) ,f_{2}\left( x_{1},\ldots
,x_{r}\right) \in \mathfrak{T}\left( \left\{ 1\right\} ,\Omega ^{\left(
1\right) },X^{\left( 1\right) }\right) $, hence we obtain an equality%
\begin{equation*}
\psi \left( f_{1}\left( x_{1}^{\left( 1\right) },\ldots ,x_{r}^{\left(
1\right) }\right) \right) =\psi \left( f_{2}\left( x_{1}^{\left( 1\right)
},\ldots ,x_{r}^{\left( 1\right) }\right) \right) .
\end{equation*}%
It means that $A^{\left( 1\right) }$ fulfills the identity (\ref{r_id}) as
identity of one-sorted algebras with signature $\Omega ^{\left( 1\right) }$.
Therefore $A^{\left( 1\right) }\in \Theta ^{\left( 1\right) }$.
\end{proof}

\begin{proposition}
\label{1_nondeg}If $\Theta $ is nondegenerate variety then $\Theta ^{\left(
1\right) }$ is nondegenerate variety.
\end{proposition}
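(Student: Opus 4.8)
The plan is to argue directly, using Proposition \ref{(1)} as the bridge between $\Theta$ and $\Theta^{(1)}$: it tells us that the sort-$1$ part $A^{(1)}$ of any $A\in\Theta$ is itself an algebra of $\Theta^{(1)}$. Since the one-sorted variety $\Theta^{(1)}$ is nondegenerate precisely when it contains some algebra with at least two elements, it will be enough to exhibit a single member of $\Theta^{(1)}$ of cardinality $\geq 2$, and such a member will be produced as the sort-$1$ part of a suitable algebra of $\Theta$.

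First I would unwind the hypothesis. As $\Theta$ is nondegenerate it is in particular $1$-nondegenerate, so $\left(x_1^{(1)}=x_2^{(1)}\right)\notin\mathfrak{I}\left(\Theta\right)$. By the definition of an identity holding in a variety this means there exist $A\in\Theta$ and a homomorphism $\varphi:\mathfrak{T}\left(x_1^{(1)},x_2^{(1)}\right)\rightarrow A$ with $\varphi\left(x_1^{(1)}\right)\neq\varphi\left(x_2^{(1)}\right)$. By (\ref{hom_1}) both images lie in $A^{(1)}$, so $\left\vert A^{(1)}\right\vert\geq 2$. Then I would transport this across Proposition \ref{(1)}: the one-sorted $\Omega^{(1)}$-algebra $A^{(1)}$ belongs to $\Theta^{(1)}$ and has at least two distinct elements, say $b_1\neq b_2$. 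Extending $x_1\mapsto b_1$, $x_2\mapsto b_2$ to a homomorphism $\psi:\mathfrak{T}\left(\left\{1\right\},\Omega^{(1)},\left\{x_1,x_2\right\}\right)\rightarrow A^{(1)}$ shows $A^{(1)}\not\models\left(x_1=x_2\right)$, whence $\left(x_1=x_2\right)\notin\mathfrak{I}\left(\Theta^{(1)}\right)$; this is exactly the nondegeneracy of $\Theta^{(1)}$.

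There is no genuinely hard step: once Proposition \ref{(1)} is available the argument is bookkeeping. The only point needing a little care is the matching of the two trivial identities — the $1$-degeneracy identity $x_1^{(1)}=x_2^{(1)}$ of the $2$-sorted variety $\Theta$ and the degeneracy identity $x_1=x_2$ of the one-sorted variety $\Theta^{(1)}$. This is precisely the content of Proposition \ref{terms_1_sort}, which identifies $\mathfrak{T}\left(X\right)^{(1)}$ with $\mathfrak{T}\left(\left\{1\right\},\Omega^{(1)},X^{(1)}\right)$, so that the two formulations refer to the same term and the argument closes at once.
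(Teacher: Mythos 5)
Your proof is correct and follows essentially the same route as the paper: both arguments produce an algebra of $\Theta$ with at least two elements of the first sort, pass to its sort-$1$ part via Proposition \ref{(1)}, and conclude that $\Theta ^{\left( 1\right) }$ cannot be degenerate. The only cosmetic difference is that the paper takes the free algebra $F_{\Theta }\left( \delta _{\Theta ,X}\left( X\right) \right) $ on $X=\left\{ x_{1}^{\left( 1\right) },x_{2}^{\left( 1\right) }\right\} $ as the witness (via Propositions \ref{X_injection} and \ref{les2}), whereas you use an arbitrary algebra refuting the identity $x_{1}^{\left( 1\right) }=x_{2}^{\left( 1\right) }$.
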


\begin{proof}
We consider the set $X=X^{\left( 1\right) }=\left\{ x_{1}^{\left( 1\right)
},x_{2}^{\left( 1\right) }\right\} $. The variety $\Theta $ is
nondegenerate, so by Proposition \ref{X_injection} the free algebra $%
F_{\Theta }\left( \delta _{\Theta ,X}\left( X\right) \right) $ contains no
less then $2$ elements of the first sort. By Proposition \ref{(1)} $\left(
F_{\Theta }\left( \delta _{\Theta ,X}\left( X\right) \right) \right)
^{\left( 1\right) }\in \Theta ^{\left( 1\right) }$. Hence, by Proposition %
\ref{les2}, the variety $\Theta ^{\left( 1\right) }$ is nondegenerate.
\end{proof}

\begin{proposition}
\label{1_IBN}If $\Theta ^{\left( 1\right) }$ is an IBN-variety, then $\Theta 
$ is an $1$-IBN-variety.
\end{proposition}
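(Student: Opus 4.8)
The plan is to show that passing to the first sort turns a free algebra of $\Theta$ into a free algebra of $\Theta^{\left(1\right)}$ on exactly the sort-$1$ free generators, and then to invoke the IBN property of $\Theta^{\left(1\right)}$. First I would establish the key structural fact: for every finite $X=X^{\left(1\right)}\uplus X^{\left(2\right)}\subset X_{0}$ the sort-$1$ part $\left(F_{\Theta}\left(\delta_{\Theta,X}\left(X\right)\right)\right)^{\left(1\right)}$ is exactly the free algebra $F_{\Theta^{\left(1\right)}}\left(\delta_{\Theta^{\left(1\right)},X^{\left(1\right)}}\left(X^{\left(1\right)}\right)\right)$, whose generating set is $\left(\delta_{\Theta,X}\left(X\right)\right)^{\left(1\right)}=\delta_{\Theta,X}\left(X^{\left(1\right)}\right)$. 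Since $F_{\Theta}\left(\delta_{\Theta,X}\left(X\right)\right)=\mathfrak{T}\left(X\right)/\mathfrak{I}\left(\Theta,X\right)$, its sort-$1$ part is $\mathfrak{T}\left(X\right)^{\left(1\right)}/\left(I\left(\Theta,X\right)\right)^{\left(1\right)}$, and by Proposition \ref{terms_1_sort} the numerator equals $\mathfrak{T}\left(\left\{1\right\},\Omega^{\left(1\right)},X^{\left(1\right)}\right)$.

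The heart of the argument is to identify the congruence, namely to prove $\left(I\left(\Theta,X\right)\right)^{\left(1\right)}=\mathfrak{I}\left(\Theta^{\left(1\right)},X^{\left(1\right)}\right)$. The inclusion $\subseteq$ is immediate, because $\left(I\left(\Theta,X\right)\right)^{\left(1\right)}\subseteq\left(I\left(\Theta\right)\right)^{\left(1\right)}$ and the latter are by definition identities holding throughout $\Theta^{\left(1\right)}$. For $\supseteq$ I would use Proposition \ref{(1)}: given a sort-$1$ identity $f_{1}=f_{2}$ that holds in every algebra of $\Theta^{\left(1\right)}$ and an arbitrary $A\in\Theta$, the reduct $A^{\left(1\right)}$ lies in $\Theta^{\left(1\right)}$, so $f_{1}=f_{2}$ holds in $A^{\left(1\right)}$; since any homomorphism $\varphi:\mathfrak{T}\left(X\right)\rightarrow A$ restricts by (\ref{hom_1}) and Proposition \ref{terms_1_sort} to a one-sorted $\Omega^{\left(1\right)}$-homomorphism $\mathfrak{T}\left(\left\{1\right\},\Omega^{\left(1\right)},X^{\left(1\right)}\right)\rightarrow A^{\left(1\right)}$, we get $\varphi\left(f_{1}\right)=\varphi\left(f_{2}\right)$, so the identity holds in $A$ as a $2$-sorted identity and belongs to $\left(I\left(\Theta,X\right)\right)^{\left(1\right)}$. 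Combined with (\ref{free_alg_var}) applied to the one-sorted variety $\Theta^{\left(1\right)}$, this gives the claimed identification; moreover the restriction of $\delta_{\Theta,X}$ to sort-$1$ terms is precisely $\delta_{\Theta^{\left(1\right)},X^{\left(1\right)}}$, so the generating set really is the sort-$1$ part of the free generators.

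To finish, I would write the two isomorphic free algebras as $F_{\Theta}\left(Y\right)$ and $F_{\Theta}\left(Z\right)$ with $Y=\delta_{\Theta,X}\left(X\right)$ and $Z=\delta_{\Theta,W}\left(W\right)$, and take an isomorphism $\Phi$ between them. By (\ref{hom_1}) every homomorphism preserves sorts, so $\Phi$ restricts to a bijection $\Phi^{\left(1\right)}$ of the sort-$1$ parts; since $\Phi$ respects all operations, in particular those of $\Omega^{\left(1\right)}$, the map $\Phi^{\left(1\right)}$ is an isomorphism of $\Omega^{\left(1\right)}$-algebras. By the structural fact this is an isomorphism $F_{\Theta^{\left(1\right)}}\left(Y^{\left(1\right)}\right)\cong F_{\Theta^{\left(1\right)}}\left(Z^{\left(1\right)}\right)$, and the IBN property of $\Theta^{\left(1\right)}$ (Definition \ref{IBN-variety}) yields $\left\vert Y^{\left(1\right)}\right\vert=\left\vert Z^{\left(1\right)}\right\vert$, which is exactly the $1$-IBN property of $\Theta$.

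I expect the main obstacle to be the $\supseteq$ inclusion in the congruence identification, that is, the careful transfer of satisfaction of a one-sorted identity in the reduct $A^{\left(1\right)}$ to satisfaction of the corresponding $2$-sorted identity in $A$; this is where Condition \ref{act_separ} and Proposition \ref{(1)} do the real work. Everything else is bookkeeping with the natural epimorphisms and the observation that $2$-sorted isomorphisms restrict to isomorphisms on each sort.
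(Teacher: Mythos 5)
Your proposal is correct and follows essentially the same route as the paper: identify $\left( F_{\Theta }\left( \delta _{\Theta ,X}\left( X\right) \right) \right) ^{\left( 1\right) }\cong \mathfrak{T}\left( \left\{ 1\right\} ,\Omega ^{\left( 1\right) },X^{\left( 1\right) }\right) /\left( I\left( \Theta ,X\right) \right) ^{\left( 1\right) }$ as the free algebra of $\Theta ^{\left( 1\right) }$ on $\delta _{\Theta ,X}\left( X^{\left( 1\right) }\right) $ via Proposition \ref{terms_1_sort} and (\ref{free_alg_var}), then restrict the isomorphism to sort $1$ and apply the IBN property of $\Theta ^{\left( 1\right) }$. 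The only difference is that you explicitly verify $\left( I\left( \Theta ,X\right) \right) ^{\left( 1\right) }=\mathfrak{I}\left( \Theta ^{\left( 1\right) },X^{\left( 1\right) }\right) $ using Proposition \ref{(1)}, a step the paper leaves implicit when it invokes (\ref{free_alg_var}); this is a harmless (indeed welcome) refinement, not a different argument.
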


\begin{proof}
In the beginning we consider a set $X=X^{\left( 1\right) }\uplus X^{\left(
2\right) }$. There exists a natural epimorphism $\delta _{\Theta ,X}:%
\mathfrak{T}\left( X\right) \rightarrow F_{\Theta }\left( \delta _{\Theta
,X}\left( X\right) \right) $, where $F_{\Theta }\left( \delta _{\Theta
,X}\left( X\right) \right) $ is a free algebra of the variety $\Theta $ with
the set of free generators $\delta _{\Theta ,X}\left( X\right) $. We will
prove that $\left( F_{\Theta }\left( \delta _{\Theta ,X}\left( X\right)
\right) \right) ^{\left( 1\right) }$ is a free algebra of the variety $%
\Theta ^{\left( 1\right) }$ with the set of free generators $\delta _{\Theta
,X}\left( X^{\left( 1\right) }\right) $. By Condition \ref{act_separ} $%
\delta _{\Theta ,X}^{\left( 1\right) }$ is a homomorphism of one-sorted
algebras with signature $\Omega ^{\left( 1\right) }$. By definition $\ker
\delta _{\Theta ,X}=\mathfrak{I}\left( \Theta ,X\right) $, so $\ker \delta
_{\Theta ,X}^{\left( 1\right) }=\mathfrak{I}\left( \Theta ,X\right) \cap
\left( \mathfrak{T}\left( X\right) ^{\left( 1\right) }\times \mathfrak{T}%
\left( X\right) ^{\left( 1\right) }\right) =\left( I\left( \Theta ,X\right)
\right) ^{\left( 1\right) }$. $\delta _{\Theta ,X}$ is an epimorphism, so $%
\mathrm{im}\delta _{\Theta ,X}^{\left( 1\right) }=\left( F_{\Theta }\left(
\delta _{\Theta ,X}\left( X\right) \right) \right) ^{\left( 1\right) }$. By
Proposition \ref{terms_1_sort} the equality $\mathfrak{T}\left( X\right)
^{\left( 1\right) }=\mathfrak{T}\left( \left\{ 1\right\} ,\Omega ^{\left(
1\right) },X^{\left( 1\right) }\right) $ holds. So $\mathrm{im}\delta
_{\Theta ,X}^{\left( 1\right) }=\left( F_{\Theta }\left( \delta _{\Theta
,X}\left( X\right) \right) \right) ^{\left( 1\right) }\cong \mathfrak{T}%
\left( X\right) ^{\left( 1\right) }/\ker \delta _{\Theta ,X}^{\left(
1\right) }=\mathfrak{T}\left( \left\{ 1\right\} ,\Omega ^{\left( 1\right)
},X^{\left( 1\right) }\right) /\left( I\left( \Theta ,X\right) \right)
^{\left( 1\right) }$. So, by (\ref{free_alg_var}),\linebreak $\left(
F_{\Theta }\left( \delta _{\Theta ,X}\left( X\right) \right) \right)
^{\left( 1\right) }$ is a free algebra of the variety $\Theta ^{\left(
1\right) }$ with the set of free generators $\delta _{\Theta ,X}^{\left(
1\right) }\left( X^{\left( 1\right) }\right) =\delta _{\Theta ,X}\left(
X^{\left( 1\right) }\right) $.

Now we suppose that there exists an isomorphism $\varphi :F_{\Theta }\left(
\delta _{\Theta ,X}\left( X\right) \right) \rightarrow F_{\Theta }\left(
\delta _{\Theta ,Y}\left( Y\right) \right) $ of two free algebras of the the
variety $\Theta $. $\varphi ^{\left( 1\right) }:$ $\left( F_{\Theta }\left(
\delta _{\Theta ,X}\left( X\right) \right) \right) ^{\left( 1\right)
}\rightarrow \left( F_{\Theta }\left( \delta _{\Theta ,Y}\left( Y\right)
\right) \right) ^{\left( 1\right) }$ is a bijection and a homomorphism of
one-sorted algebras with signature $\Omega ^{\left( 1\right) }$, so $\varphi
^{\left( 1\right) }$ is an isomorphism. As we proved above $\left( F_{\Theta
}\left( \delta _{\Theta ,X}\left( X\right) \right) \right) ^{\left( 1\right)
}$ and $\left( F_{\Theta }\left( \delta _{\Theta ,Y}\left( Y\right) \right)
\right) ^{\left( 1\right) }$ are free algebras of the variety $\Theta
^{\left( 1\right) }$ with sets of free generators $\delta _{\Theta ,X}\left(
X^{\left( 1\right) }\right) $ and $\delta _{\Theta ,Y}\left( Y^{\left(
1\right) }\right) $ respectively. If $\Theta ^{\left( 1\right) }$ is an
IBN-variety, then $\left\vert \delta _{\Theta ,X}\left( X^{\left( 1\right)
}\right) \right\vert =\left\vert \delta _{\Theta ,Y}\left( Y^{\left(
1\right) }\right) \right\vert $. The equalities $\delta _{\Theta ,X}\left(
X^{\left( 1\right) }\right) =\left( \delta _{\Theta ,X}\left( X\right)
\right) ^{\left( 1\right) }$ and $\delta _{\Theta ,Y}\left( Y^{\left(
1\right) }\right) =\left( \delta _{\Theta ,Y}\left( Y\right) \right)
^{\left( 1\right) }$ hold, so $\Theta $ is an $1$-IBN-variety.
\end{proof}

Now we consider the variety $\Lambda $ defined by identity%
\begin{equation}
x^{\left( 1\right) }\circ x^{\left( 2\right) }=s\left( x^{\left( 2\right)
}\right) \text{,}  \label{tr_act}
\end{equation}%
where $s\left( x^{\left( 2\right) }\right) $ is some term from element $%
x^{\left( 2\right) }$ constructed by operations of $\Omega ^{\left( 2\right)
}$, i.e., element of $\mathfrak{T}\left( \left\{ 2\right\} ,\Omega ^{\left(
2\right) },x^{\left( 2\right) }\right) $. Further, in particular examples,
we will choose the term $s\left( x^{\left( 2\right) }\right) $ in different
ways.

\begin{proposition}
\label{tau_2}If some variety $\Delta $ is a subvariety of $\Lambda $, then
for every $X\subset X_{0}$ and every $f\in \mathfrak{T}\left( X\right)
^{\left( 2\right) }$ there exists $f^{\prime }\in \mathfrak{T}\left( \left\{
2\right\} ,\Omega ^{\left( 2\right) },X^{\left( 2\right) }\right) $ such
that $\delta _{\Delta ,X}^{\left( 2\right) }\left( f\right) =\delta _{\Delta
,X}^{\left( 2\right) }\left( f^{\prime }\right) $, where $\delta _{\Delta
,X}:\mathfrak{T}\left( X\right) \rightarrow F_{\Delta }\left( \delta
_{\Delta ,X}\left( X\right) \right) $ is a natural epimorphism.
\end{proposition}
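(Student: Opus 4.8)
The plan is to argue by induction on the construction of the second-sort term $f\in\mathfrak{T}\left(X\right)^{\left(2\right)}$, exploiting the rigid shape of second-sort terms forced by Condition \ref{act_separ}. Since the only operations producing a result of sort $2$ are those of $\Omega^{\left(2\right)}$ (all arguments of sort $2$) and the action $\circ$ with $\tau_{\circ}=\left(1,2;2\right)$, every such $f$ falls into one of three cases: (i) $f\in X^{\left(2\right)}$ or $f$ is a constant of sort $2$; (ii) $f=\omega\left(t_{1},\ldots,t_{n}\right)$ with $\omega\in\Omega^{\left(2\right)}$ and $t_{1},\ldots,t_{n}\in\mathfrak{T}\left(X\right)^{\left(2\right)}$; or (iii) $f=t_{1}\circ t_{2}$ with $t_{1}\in\mathfrak{T}\left(X\right)^{\left(1\right)}$ and $t_{2}\in\mathfrak{T}\left(X\right)^{\left(2\right)}$. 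In each case I will produce the required $f^{\prime}\in\mathfrak{T}\left(\left\{2\right\},\Omega^{\left(2\right)},X^{\left(2\right)}\right)$.

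Cases (i) and (ii) are the routine part. In case (i) the term $f$ already lies in $\mathfrak{T}\left(\left\{2\right\},\Omega^{\left(2\right)},X^{\left(2\right)}\right)$, so $f^{\prime}=f$ works. In case (ii) the induction hypothesis gives, for each immediate subterm $t_{k}$, some $t_{k}^{\prime}\in\mathfrak{T}\left(\left\{2\right\},\Omega^{\left(2\right)},X^{\left(2\right)}\right)$ with $\delta_{\Delta,X}^{\left(2\right)}\left(t_{k}\right)=\delta_{\Delta,X}^{\left(2\right)}\left(t_{k}^{\prime}\right)$; then $f^{\prime}=\omega\left(t_{1}^{\prime},\ldots,t_{n}^{\prime}\right)$ again lies in the pure second-sort term algebra, and, since $\delta_{\Delta,X}$ is a homomorphism and hence commutes with $\omega$, I get $\delta_{\Delta,X}^{\left(2\right)}\left(f\right)=\delta_{\Delta,X}^{\left(2\right)}\left(f^{\prime}\right)$.

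The heart of the argument is case (iii), and this is where the hypothesis $\Delta\subseteq\Lambda$ enters. By Claim \ref{cl} item \ref{c_2} the inclusion $\Delta\subseteq\Lambda$ gives $\mathfrak{I}\left(\Lambda\right)\subseteq\mathfrak{I}\left(\Delta\right)$, so the defining identity (\ref{tr_act}), $x^{\left(1\right)}\circ x^{\left(2\right)}=s\left(x^{\left(2\right)}\right)$, belongs to $\mathfrak{I}\left(\Delta\right)$, in fact to $\mathfrak{I}\left(\Delta,\left\{x^{\left(1\right)},x^{\left(2\right)}\right\}\right)$. I then take the homomorphism $\mathfrak{T}\left(\left\{x^{\left(1\right)},x^{\left(2\right)}\right\}\right)\rightarrow\mathfrak{T}\left(X\right)$ sending $x^{\left(1\right)}\mapsto t_{1}$ and $x^{\left(2\right)}\mapsto t_{2}$ (available by Proposition \ref{free_empty}, since both sorts of $\mathfrak{T}\left(X\right)$ are nonempty as $f$ contains both a first- and a second-sort subterm) and apply Claim \ref{cl} item \ref{c_7} to conclude $\left(t_{1}\circ t_{2}=s\left(t_{2}\right)\right)\in\mathfrak{I}\left(\Delta,X\right)$, whence $\delta_{\Delta,X}^{\left(2\right)}\left(f\right)=\delta_{\Delta,X}^{\left(2\right)}\left(s\left(t_{2}\right)\right)$. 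This strips the outermost $\circ$ and, with it, the first-sort subterm $t_{1}$. To clean up the inside, I apply the induction hypothesis to the immediate subterm $t_{2}$, obtaining $t_{2}^{\prime}\in\mathfrak{T}\left(\left\{2\right\},\Omega^{\left(2\right)},X^{\left(2\right)}\right)$ with $\delta_{\Delta,X}^{\left(2\right)}\left(t_{2}\right)=\delta_{\Delta,X}^{\left(2\right)}\left(t_{2}^{\prime}\right)$; since $s$ is built only from operations of $\Omega^{\left(2\right)}$, pushing $\delta_{\Delta,X}$ through $s$ yields $\delta_{\Delta,X}^{\left(2\right)}\left(s\left(t_{2}\right)\right)=\delta_{\Delta,X}^{\left(2\right)}\left(s\left(t_{2}^{\prime}\right)\right)$, and $f^{\prime}=s\left(t_{2}^{\prime}\right)$ lies in $\mathfrak{T}\left(\left\{2\right\},\Omega^{\left(2\right)},X^{\left(2\right)}\right)$ because both $s$ and $t_{2}^{\prime}$ use only $\Omega^{\left(2\right)}$-operations and sort-$2$ letters. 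Combining the two equalities closes case (iii).

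I expect the only genuine obstacle to be the bookkeeping in case (iii): the induction must be driven by the subterm $t_{2}$, never by $s\left(t_{2}\right)$ (which may be longer than $f$), and one must check that substituting $t_{2}^{\prime}$ into $s$ keeps the result inside the pure second-sort term algebra. It is worth emphasizing that, unlike the first sort treated in Proposition \ref{terms_1_sort}, here $\mathfrak{T}\left(X\right)^{\left(2\right)}$ is strictly larger than $\mathfrak{T}\left(\left\{2\right\},\Omega^{\left(2\right)},X^{\left(2\right)}\right)$, so the reduction is not a purely syntactic fact about terms and genuinely relies on the identity (\ref{tr_act}) modulo $\mathfrak{I}\left(\Delta,X\right)$.
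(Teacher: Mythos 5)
Your proposal is correct and follows essentially the same route as the paper: induction by construction with the same three cases, the same use of the induction hypothesis on the immediate subterms, and the same appeal to the identity (\ref{tr_act}) holding in $F_{\Delta }\left( \delta _{\Delta ,X}\left( X\right) \right) $ (which you phrase equivalently via Claim \ref{cl} items \ref{c_2} and \ref{c_7} at the level of the congruence $\mathfrak{I}\left( \Delta ,X\right) $) to replace $t_{1}\circ t_{2}$ by $s\left( t_{2}\right) $ modulo $\delta _{\Delta ,X}$. Your closing caveats about driving the induction by $t_{2}$ rather than $s\left( t_{2}\right) $ match exactly how the paper's proof is organized.
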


\begin{proof}
We will prove this fact by induction by construction. For elements of the
set $X^{\left( 2\right) }$ and for constant of the second sort this fact is
trivial. This is the induction basis. Now we will make the induction step.

In the first case we suppose that $f=\omega \left( t_{1},\ldots t_{n}\right) 
$, where $\omega \in \Omega ^{\left( 2\right) }$ and $t_{1},\ldots ,t_{n}\in 
\mathfrak{T}\left( X\right) ^{\left( 2\right) }$. By induction hypothesis
there exist\linebreak $t_{1}^{\prime },\ldots ,t_{n}^{\prime }\in \mathfrak{T%
}\left( \left\{ 2\right\} ,\Omega ^{\left( 2\right) },X^{\left( 2\right)
}\right) $, such that $\delta _{\Delta ,X}^{\left( 2\right) }\left(
t_{i}\right) =\delta _{\Delta ,X}^{\left( 2\right) }\left( t_{i}^{\prime
}\right) $, $1\leq i\leq n$. Therefore%
\begin{equation*}
\delta _{\Delta ,X}^{\left( 2\right) }\left( f\right) =\delta _{\Delta
,X}^{\left( 2\right) }\left( \omega \left( t_{1},\ldots ,t_{n}\right)
\right) =\omega \left( \delta _{\Delta ,X}^{\left( 2\right) }\left(
t_{1}\right) ,\ldots ,\delta _{\Delta ,X}^{\left( 2\right) }\left(
t_{n}\right) \right) =
\end{equation*}%
\begin{equation*}
\omega \left( \delta _{\Delta ,X}^{\left( 2\right) }\left( t_{1}^{\prime
}\right) ,\ldots ,\delta _{\Delta ,X}^{\left( 2\right) }\left( t_{n}^{\prime
}\right) \right) =\delta _{\Delta ,X}^{\left( 2\right) }\left( \omega \left(
t_{1}^{\prime },\ldots ,t_{n}^{\prime }\right) \right) .
\end{equation*}%
$\omega \left( t_{1}^{\prime },\ldots ,t_{n}^{\prime }\right) =f^{\prime
}\in \mathfrak{T}\left( \left\{ 2\right\} ,\Omega ^{\left( 2\right)
},X^{\left( 2\right) }\right) $.

In the second case we suppose that $f=t^{\left( 1\right) }\circ t^{\left(
2\right) }$, where $t^{\left( 1\right) }\in \mathfrak{T}\left( X\right)
^{\left( 1\right) }$, $t^{\left( 2\right) }\in \mathfrak{T}\left( X\right)
^{\left( 2\right) }$. By the induction hypothesis there exists $t^{\prime
}\in \mathfrak{T}\left( \left\{ 2\right\} ,\Omega ^{\left( 2\right)
},X^{\left( 2\right) }\right) $, such that $\delta _{\Delta ,X}^{\left(
2\right) }\left( t^{\left( 2\right) }\right) =\delta _{\Delta ,X}^{\left(
2\right) }\left( t^{\prime }\right) $. $F_{\Delta }\left( \delta _{\Delta
,X}\left( X\right) \right) $ is an algebra of the variety $\Delta $, so, by
item \ref{c_2} of Claim \ref{cl}, it fulfills the identity (\ref{tr_act}).
Therefore $\delta _{\Delta ,X}^{\left( 2\right) }\left( f\right) =\delta
_{\Delta ,X}^{\left( 2\right) }\left( t^{\left( 1\right) }\right) \circ
\delta _{\Delta ,X}^{\left( 2\right) }\left( t^{\left( 2\right) }\right)
=s\left( \delta _{\Delta ,X}^{\left( 2\right) }\left( t^{\left( 2\right)
}\right) \right) =s\left( \delta _{\Delta ,X}^{\left( 2\right) }\left(
t^{\prime }\right) \right) =\delta _{\Delta ,X}^{\left( 2\right) }\left(
s\left( t^{\prime }\right) \right) $. $s\left( t^{\prime }\right) =f^{\prime
}\in \mathfrak{T}\left( \left\{ 2\right\} ,\Omega ^{\left( 2\right)
},X^{\left( 2\right) }\right) $.
\end{proof}

\begin{corollary}
\label{cor_tau_2}Under the conditions of this proposition the equality%
\begin{equation*}
\delta _{\Delta ,X}^{\left( 2\right) }\left( \mathfrak{T}\left( X\right)
^{\left( 2\right) }\right) =\left( F_{\Delta }\left( \delta _{\Delta
,X}\left( X\right) \right) \right) ^{\left( 2\right) }=\delta _{\Delta
,X}^{\left( 2\right) }\left( \mathfrak{T}\left( \left\{ 2\right\} ,\Omega
^{\left( 2\right) },X^{\left( 2\right) }\right) \right) 
\end{equation*}%
holds.
\end{corollary}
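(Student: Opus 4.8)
The plan is to prove the two asserted equalities in turn; both are short consequences of facts already established. For the first equality I would use only that $\delta _{\Delta ,X}$ is a (surjective) epimorphism. By equation (\ref{hom_1}) the restriction $\delta _{\Delta ,X}^{\left( 2\right) }$ maps $\mathfrak{T}\left( X\right) ^{\left( 2\right) }$ into $\left( F_{\Delta }\left( \delta _{\Delta ,X}\left( X\right) \right) \right) ^{\left( 2\right) }$; conversely every element of the target of sort $2$ is the image of some element of $\mathfrak{T}\left( X\right) $, which, again by (\ref{hom_1}), must itself be of sort $2$. Hence surjectivity gives $\delta _{\Delta ,X}^{\left( 2\right) }\left( \mathfrak{T}\left( X\right) ^{\left( 2\right) }\right) =\left( F_{\Delta }\left( \delta _{\Delta ,X}\left( X\right) \right) \right) ^{\left( 2\right) }$.

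For the second equality I would argue by double inclusion of the two image sets $\delta _{\Delta ,X}^{\left( 2\right) }\left( \mathfrak{T}\left( X\right) ^{\left( 2\right) }\right) $ and $\delta _{\Delta ,X}^{\left( 2\right) }\left( \mathfrak{T}\left( \left\{ 2\right\} ,\Omega ^{\left( 2\right) },X^{\left( 2\right) }\right) \right) $. The inclusion $\mathfrak{T}\left( \left\{ 2\right\} ,\Omega ^{\left( 2\right) },X^{\left( 2\right) }\right) \subseteq \mathfrak{T}\left( X\right) ^{\left( 2\right) }$ is immediate from the construction of term algebras, since a sort-$2$ term built only from sort-$2$ variables by operations of $\Omega ^{\left( 2\right) }$ is in particular a sort-$2$ term of the full algebra. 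Applying $\delta _{\Delta ,X}^{\left( 2\right) }$ then yields $\delta _{\Delta ,X}^{\left( 2\right) }\left( \mathfrak{T}\left( \left\{ 2\right\} ,\Omega ^{\left( 2\right) },X^{\left( 2\right) }\right) \right) \subseteq \delta _{\Delta ,X}^{\left( 2\right) }\left( \mathfrak{T}\left( X\right) ^{\left( 2\right) }\right) $, which is one of the two inclusions needed.

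The reverse inclusion is exactly where Proposition \ref{tau_2} does the work. For an arbitrary $f\in \mathfrak{T}\left( X\right) ^{\left( 2\right) }$ that proposition produces $f^{\prime }\in \mathfrak{T}\left( \left\{ 2\right\} ,\Omega ^{\left( 2\right) },X^{\left( 2\right) }\right) $ with $\delta _{\Delta ,X}^{\left( 2\right) }\left( f\right) =\delta _{\Delta ,X}^{\left( 2\right) }\left( f^{\prime }\right) $, so every element of $\delta _{\Delta ,X}^{\left( 2\right) }\left( \mathfrak{T}\left( X\right) ^{\left( 2\right) }\right) $ already lies in $\delta _{\Delta ,X}^{\left( 2\right) }\left( \mathfrak{T}\left( \left\{ 2\right\} ,\Omega ^{\left( 2\right) },X^{\left( 2\right) }\right) \right) $. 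The two inclusions together show that these image sets coincide, and combined with the first equality this gives the full chain of equalities. I do not anticipate a genuine obstacle here: the corollary is essentially a repackaging of Proposition \ref{tau_2}, and the only point to keep in mind is the consistent use of (\ref{hom_1}) to guarantee that restriction to sort $2$ is compatible both with the epimorphism and with the term-algebra inclusion.
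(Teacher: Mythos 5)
Your proof is correct and is exactly the argument the paper intends: the paper states this corollary without proof, treating it as an immediate consequence of Proposition \ref{tau_2} together with the surjectivity and sort-preservation (via (\ref{hom_1})) of $\delta _{\Delta ,X}$, which is precisely what you spell out. Nothing is missing.
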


\setcounter{corollary}{0}

Now we denote by $\Delta $ a subvariety of the variety $\Theta $, defined by
the identity (\ref{tr_act}). $\mathfrak{I}\left( \Lambda \right) \subseteq 
\mathfrak{I}\left( \Delta \right) $, so $\Lambda =Var\left( \mathfrak{I}%
\left( \Lambda \right) \right) \supseteq Var\left( \mathfrak{I}\left( \Delta
\right) \right) =\Delta $, by items \ref{c_1} and \ref{c_6} of Claim \ref{cl}%
. Therefore $\Delta $ is a subject of Proposition \ref{tau_2}.

For every $X\subset X_{0}$ we denote $\mathfrak{I}\left( \Delta ,X\right)
\cap \left( \mathfrak{T}\left( \left\{ 2\right\} ,\Omega ^{\left( 2\right)
},X^{\left( 2\right) }\right) \times \mathfrak{T}\left( \left\{ 2\right\}
,\Omega ^{\left( 2\right) },X^{\left( 2\right) }\right) \right) $ by $\left(
I\left( \Delta ,X\right) \right) ^{\left( 2\right) }$. An union $%
\bigcup\limits_{X\subset X_{0},\left\vert X\right\vert <\infty }\left(
I\left( \Delta ,X\right) \right) ^{\left( 2\right) }$ we denote by $\left(
I\left( \Delta \right) \right) ^{\left( 2\right) }$. $\left( I\left( \Delta
,X\right) \right) ^{\left( 2\right) }$ is a congruence in $\mathfrak{T}%
\left( \left\{ 2\right\} ,\Omega ^{\left( 2\right) },X^{\left( 2\right)
}\right) $, so elements of $\left( I\left( \Delta \right) \right) ^{\left(
2\right) }$ can be considered as identities of one-sorted algebras with the
signature $\Omega ^{\left( 2\right) }$. The variety defined by these
identities we denote by $\Delta ^{\left( 2\right) }$. We define the variety $%
\Delta ^{\left( 1\right) }$ in the same way as we previously defined the
variety $\Theta ^{\left( 1\right) }$.

Now we consider an one-sorted algebra $H^{\left( 1\right) }$ with signature $%
\Omega ^{\left( 1\right) }$, such that $H^{\left( 1\right) }\in \Delta
^{\left( 1\right) }$, and an one-sorted algebra $H^{\left( 2\right) }$ with
signature $\Omega ^{\left( 2\right) }$, such that $H^{\left( 2\right) }\in
\Delta ^{\left( 2\right) }$. We construct a $2$-sorted algebra $H=H^{\left(
1\right) }\uplus H^{\left( 2\right) }$, such that $H^{\left( 1\right) }$ is
a set of all elements of the first sort of algebra $H$ and $H^{\left(
2\right) }$ is a set of all elements of the second sort of algebra $H$. We
define the action elements of $H^{\left( 1\right) }$ over elements of $%
H^{\left( 2\right) }$ this way%
\begin{equation}
\forall h^{\left( 1\right) }\in H^{\left( 1\right) },h^{\left( 2\right) }\in
H^{\left( 2\right) }\hspace{0.1in}h^{\left( 1\right) }\circ h^{\left(
2\right) }=s\left( h^{\left( 2\right) }\right) ,  \label{triv_act_axiom}
\end{equation}%
where $s$ is a term from (\ref{tr_act}). By Condition \ref{act_separ} the
algebra $H$ is an algebra of signature $\Omega $.

\begin{proposition}
\label{delta}$H$ is an algebra of the variety $\Delta $.
\end{proposition}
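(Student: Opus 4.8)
The plan is to check $H\in\Delta$ straight from the definition of a variety: by item \ref{c_6} of Claim \ref{cl} we have $\Delta=Var(\mathfrak{I}(\Delta))$, so it suffices to show that $H$ satisfies every identity $(f_1=f_2)\in\mathfrak{I}(\Delta,X)$ for every finite $X=X^{(1)}\uplus X^{(2)}\subset X_0$. Since $\mathfrak{I}(\Delta,X)=\ker\delta_{\Delta,X}$ is a congruence in $\mathfrak{T}(X)$, it is contained in $(\mathfrak{T}(X)^{(1)}\times\mathfrak{T}(X)^{(1)})\uplus(\mathfrak{T}(X)^{(2)}\times\mathfrak{T}(X)^{(2)})$, and I would split the verification according to the sort of $f_1,f_2$.

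First I would dispose of the first-sort identities, i.e. $(f_1=f_2)\in\mathfrak{I}(\Delta,X)$ with $f_1,f_2\in\mathfrak{T}(X)^{(1)}=\mathfrak{T}(\{1\},\Omega^{(1)},X^{(1)})$ (Proposition \ref{terms_1_sort}). Such a pair belongs to $(I(\Delta,X))^{(1)}\subseteq(I(\Delta))^{(1)}$, hence is one of the defining identities of $\Delta^{(1)}$; as $H^{(1)}\in\Delta^{(1)}$ by hypothesis, $H^{(1)}$ satisfies it as a one-sorted identity. Because $f_1,f_2$ contain only first-sort variables and only operations of $\Omega^{(1)}$, which by Condition \ref{act_separ} act inside $H^{(1)}$, evaluating $f_1,f_2$ on elements of $H$ is the same as evaluating them in the one-sorted algebra $H^{(1)}$; thus $H\models(f_1=f_2)$. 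This is the easy, ``parallel'' case.

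The main obstacle is the second-sort identities $(f_1=f_2)$ with $f_1,f_2\in\mathfrak{T}(X)^{(2)}$, since second-sort terms may involve the action $\circ$ and first-sort subterms and so need not live inside $H^{(2)}$. The key point is that $H$ satisfies identity (\ref{tr_act}): this is exactly what (\ref{triv_act_axiom}) asserts, with the same term $s$. I would therefore re-run the induction by construction of Proposition \ref{tau_2}, establishing simultaneously, for each $f\in\mathfrak{T}(X)^{(2)}$, a term $\widehat{f}\in\mathfrak{T}(\{2\},\Omega^{(2)},X^{(2)})$ with both $(f=\widehat{f})\in\mathfrak{I}(\Delta,X)$ (which is Proposition \ref{tau_2} itself) and $H\models(f=\widehat{f})$. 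The base case and the $\Omega^{(2)}$-step follow at once from the induction hypothesis and the substitution property; in the step $f=t^{(1)}\circ t^{(2)}$ I would use that both $F_\Delta(\delta_{\Delta,X}(X))$ and $H$ satisfy (\ref{tr_act}) to rewrite $t^{(1)}\circ t^{(2)}$ as $s(t^{(2)})$, then apply the induction hypothesis to $t^{(2)}$ and set $\widehat{f}=s(\widehat{t^{(2)}})$, which again lies in $\mathfrak{T}(\{2\},\Omega^{(2)},X^{(2)})$.

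Finally I would assemble the pieces. Applying this reduction to $f_1,f_2$ yields $\widehat{f_1},\widehat{f_2}\in\mathfrak{T}(\{2\},\Omega^{(2)},X^{(2)})$ with $H\models(f_1=\widehat{f_1})$, $H\models(f_2=\widehat{f_2})$, and $(f_i=\widehat{f_i})\in\mathfrak{I}(\Delta,X)$. Since $\mathfrak{I}(\Delta,X)$ is a congruence containing $(f_1=f_2)$, transitivity gives $(\widehat{f_1}=\widehat{f_2})\in\mathfrak{I}(\Delta,X)\cap(\mathfrak{T}(\{2\},\Omega^{(2)},X^{(2)})\times\mathfrak{T}(\{2\},\Omega^{(2)},X^{(2)}))=(I(\Delta,X))^{(2)}\subseteq(I(\Delta))^{(2)}$, a defining identity of $\Delta^{(2)}$. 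As $H^{(2)}\in\Delta^{(2)}$ and $\widehat{f_1},\widehat{f_2}$ contain only second-sort variables and $\Omega^{(2)}$-operations, their evaluation in $H$ reduces to evaluation in $H^{(2)}$, so $H\models(\widehat{f_1}=\widehat{f_2})$. Chaining $H\models(f_1=\widehat{f_1})$, $H\models(\widehat{f_1}=\widehat{f_2})$ and $H\models(\widehat{f_2}=f_2)$ gives $H\models(f_1=f_2)$, which finishes the second-sort case and the proof. The only genuine subtlety is the second-sort reduction step, and it works precisely because the action on $H$ was defined by (\ref{triv_act_axiom}) to satisfy (\ref{tr_act}) on the nose.
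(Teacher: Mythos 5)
Your proof is correct and follows essentially the same route as the paper: split the identities of $\Delta$ by sort, handle the first sort via Proposition \ref{terms_1_sort} and $H^{\left( 1\right) }\in \Delta ^{\left( 1\right) }$, and reduce second-sort terms to terms of $\mathfrak{T}\left( \left\{ 2\right\} ,\Omega ^{\left( 2\right) },X^{\left( 2\right) }\right) $ before invoking $H^{\left( 2\right) }\in \Delta ^{\left( 2\right) }$. The only cosmetic difference is that where you re-run the induction of Proposition \ref{tau_2} to verify $H\models \left( f=\widehat{f}\right) $ directly from (\ref{triv_act_axiom}), the paper first notes $H\in \Lambda $ and applies Proposition \ref{tau_2} to $\Lambda $ itself, so that $\left( f,f^{\prime }\right) \in \mathfrak{I}\left( \Lambda ,X\right) $ already forces $\varphi \left( f\right) =\varphi \left( f^{\prime }\right) $ for every homomorphism $\varphi :\mathfrak{T}\left( X\right) \rightarrow H$.
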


\begin{proof}
We have from (\ref{triv_act_axiom}) that $H$ fulfills the identity (\ref%
{tr_act}), so $H\in \Lambda $. We will prove that algebra $H$ fulfills all
identities of variety $\Delta $.

We consider a set $X=X^{\left( 1\right) }\uplus X^{\left( 2\right) }\subset
X_{0}$ and algebra of terms $\mathfrak{T}\left( X\right) $. We suppose that $%
\left( f,g\right) \in \mathfrak{I}\left( \Delta ,X\right) \cap \left( 
\mathfrak{T}\left( X\right) ^{\left( 2\right) }\times \mathfrak{T}\left(
X\right) ^{\left( 2\right) }\right) \subset \mathfrak{I}\left( \Delta
\right) $. It means that $\left( f,g\right) \in \ker \delta _{\Delta ,X}$,
i.e., $\delta _{\Delta ,X}\left( f\right) =\delta _{\Delta ,X}\left(
g\right) $. By Proposition \ref{tau_2} there exist $f^{\prime },g^{\prime
}\in \mathfrak{T}\left( \left\{ 2\right\} ,\Omega ^{\left( 2\right)
},X^{\left( 2\right) }\right) $ such that $\delta _{\Lambda ,X}\left(
f\right) =\delta _{\Lambda ,X}^{\left( 2\right) }\left( f\right) =\delta
_{\Lambda ,X}^{\left( 2\right) }\left( f^{\prime }\right) =\delta _{\Lambda
,X}\left( f^{\prime }\right) $ and $\delta _{\Lambda ,X}\left( g\right)
=\delta _{\Lambda ,X}^{\left( 2\right) }\left( g\right) =\delta _{\Lambda
,X}^{\left( 2\right) }\left( g^{\prime }\right) =\delta _{\Lambda ,X}\left(
g^{\prime }\right) $. Therefore%
\begin{equation}
\left( f,f^{\prime }\right) ,\left( g,g^{\prime }\right) \in \ker \delta
_{\Lambda ,X}=\mathfrak{I}\left( \Lambda ,X\right) .  \label{praim_lambda}
\end{equation}%
Now we consider an arbitrary homomorphism $\varphi :\mathfrak{T}\left(
X\right) \rightarrow H$. $H\in \Lambda $, so%
\begin{equation}
\varphi \left( f\right) =\varphi \left( f^{\prime }\right) ,\varphi \left(
g\right) =\varphi \left( g^{\prime }\right) .  \label{praim_f_g}
\end{equation}

$\Delta \subseteq \Lambda $ , so, similar to item \ref{c_2} of Claim \ref{cl}%
, $\ker \delta _{\Delta ,X}=\mathfrak{I}\left( \Delta ,X\right) \supseteq
\ker \delta _{\Lambda ,X}=\mathfrak{I}\left( \Lambda ,X\right) $. Hence, by (%
\ref{praim_lambda}) $\delta _{\Delta ,X}\left( f\right) =\delta _{\Delta
,X}^{\left( 2\right) }\left( f\right) =\delta _{\Delta ,X}^{\left( 2\right)
}\left( f^{\prime }\right) =\delta _{\Delta ,X}\left( f^{\prime }\right) $\
and $\delta _{\Delta ,X}\left( g\right) =\delta _{\Delta ,X}^{\left(
2\right) }\left( g\right) =\delta _{\Delta ,X}^{\left( 2\right) }\left(
g^{\prime }\right) =\delta _{\Delta ,X}\left( g^{\prime }\right) $. Hence $%
\left( f^{\prime },g^{\prime }\right) \in \ker \delta _{\Delta ,X}=\mathfrak{%
I}\left( \Delta ,X\right) $. So $\left( f^{\prime },g^{\prime }\right) \in
\left( I\left( \Delta ,X\right) \right) ^{\left( 2\right) }\subset \left(
I\left( \Delta \right) \right) ^{\left( 2\right) }$. Here we consider $%
\left( f^{\prime }=g^{\prime }\right) $ as an identity of one-sorted
algebras with the signature $\Omega ^{\left( 2\right) }$. $\varphi _{\mid 
\mathfrak{T}\left( \left\{ 2\right\} ,\Omega ^{\left( 2\right) },X^{\left(
2\right) }\right) }:\mathfrak{T}\left( \left\{ 2\right\} ,\Omega ^{\left(
2\right) },X^{\left( 2\right) }\right) \rightarrow H^{\left( 2\right) }$ is
a homomorphism from the one-sorted algebra of terms with signature $\Omega
^{\left( 2\right) }$ to the one-sorted algebra $H^{\left( 2\right) }$ with
same signature. $H^{\left( 2\right) }\in \Delta ^{\left( 2\right) }$, so $%
\varphi \left( f^{\prime }\right) =\varphi \left( g^{\prime }\right) $. By (%
\ref{praim_f_g}) we conclude that $\varphi \left( f\right) =\varphi \left(
g\right) $, i.e., $H\models \left( f=g\right) $.

Now we consider $\left( f,g\right) \in \mathfrak{I}\left( \Delta ,X\right)
\cap \left( \mathfrak{T}\left( X\right) ^{\left( 1\right) }\times \mathfrak{T%
}\left( X\right) ^{\left( 1\right) }\right) $. By Proposition \ref%
{terms_1_sort} we can consider $\left( f=g\right) $ as an identity of
one-sorted algebras with the signature $\Omega ^{\left( 1\right) }$. We
consider an arbitrary homomorphism $\varphi :\mathfrak{T}\left( X\right)
\rightarrow H$. By Proposition \ref{terms_1_sort} $\varphi ^{\left( 1\right)
}:\mathfrak{T}\left( \left\{ 1\right\} ,\Omega ^{\left( 1\right) },X^{\left(
1\right) }\right) \rightarrow H^{\left( 1\right) }$ is a homomorphism from
the one-sorted algebra of terms with signature $\Omega ^{\left( 1\right) }$
to the one-sorted algebra $H^{\left( 1\right) }$ with same signature, $%
\left( f,g\right) \in \left( I\left( \Delta \right) \right) ^{\left(
1\right) }$, $H^{\left( 1\right) }\in \Delta ^{\left( 1\right) }$, so $%
\varphi \left( f\right) =\varphi ^{\left( 1\right) }\left( f\right) =\varphi
^{\left( 1\right) }\left( g\right) =\varphi \left( g\right) $, i.e., $%
H\models \left( f=g\right) $.
\end{proof}

\begin{proposition}
\label{delta_2}For every set $X=X^{\left( 1\right) }\uplus X^{\left(
2\right) }\subset X_{0}$ the algebra\linebreak $\left( F_{\Delta }\left(
\delta _{\Delta ,X}\left( X\right) \right) \right) ^{\left( 2\right)
}=\delta _{\Delta ,X}^{\left( 2\right) }\left( \mathfrak{T}\left( \left\{
2\right\} ,\Omega ^{\left( 2\right) },X^{\left( 2\right) }\right) \right) $
is a free algebra of the variety $\Delta ^{\left( 2\right) }$ with the set
of generators $\delta _{\Delta ,X}^{\left( 2\right) }\left( X^{\left(
2\right) }\right) $.
\end{proposition}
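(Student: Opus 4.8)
The plan is to follow the proof of Proposition \ref{1_IBN} almost verbatim, the only change being that the role of Proposition \ref{terms_1_sort} there is now played by Corollary \ref{cor_tau_2}. First I would restrict the second-sort component $\delta _{\Delta ,X}^{\left( 2\right) }$ of the natural epimorphism to the one-sorted term algebra $\mathfrak{T}\left( \left\{ 2\right\} ,\Omega ^{\left( 2\right) },X^{\left( 2\right) }\right) $ and call this restriction $\rho $. By Condition \ref{act_separ} the operations of $\Omega ^{\left( 2\right) }$ keep both their arguments and their values in the second sort, so $\mathfrak{T}\left( \left\{ 2\right\} ,\Omega ^{\left( 2\right) },X^{\left( 2\right) }\right) $ is closed under them and $\rho $ is a homomorphism of one-sorted algebras of signature $\Omega ^{\left( 2\right) }$.

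Next I would compute the image and the kernel of $\rho $. That $\rho $ maps onto $\left( F_{\Delta }\left( \delta _{\Delta ,X}\left( X\right) \right) \right) ^{\left( 2\right) }$ is exactly the right-hand equality of Corollary \ref{cor_tau_2}. Because $\ker \delta _{\Delta ,X}=\mathfrak{I}\left( \Delta ,X\right) $ and $\rho $ is a restriction, its kernel is $\mathfrak{I}\left( \Delta ,X\right) \cap \left( \mathfrak{T}\left( \left\{ 2\right\} ,\Omega ^{\left( 2\right) },X^{\left( 2\right) }\right) \times \mathfrak{T}\left( \left\{ 2\right\} ,\Omega ^{\left( 2\right) },X^{\left( 2\right) }\right) \right) =\left( I\left( \Delta ,X\right) \right) ^{\left( 2\right) }$, which is its very definition. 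The isomorphism theorem then gives $\left( F_{\Delta }\left( \delta _{\Delta ,X}\left( X\right) \right) \right) ^{\left( 2\right) }\cong \mathfrak{T}\left( \left\{ 2\right\} ,\Omega ^{\left( 2\right) },X^{\left( 2\right) }\right) /\left( I\left( \Delta ,X\right) \right) ^{\left( 2\right) }$, taking the set $\delta _{\Delta ,X}^{\left( 2\right) }\left( X^{\left( 2\right) }\right) $ onto the images of the generators $X^{\left( 2\right) }$.

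To finish via (\ref{free_alg_var}) I must prove the congruence equality $\left( I\left( \Delta ,X\right) \right) ^{\left( 2\right) }=\mathfrak{I}\left( \Delta ^{\left( 2\right) },X^{\left( 2\right) }\right) $, and this is where the genuine difficulty lies: unlike the first-sort situation there is no analogue of Proposition \ref{terms_1_sort}, since second-sort terms may contain the action $\circ $, so $\mathfrak{T}\left( X\right) ^{\left( 2\right) }$ is strictly larger than $\mathfrak{T}\left( \left\{ 2\right\} ,\Omega ^{\left( 2\right) },X^{\left( 2\right) }\right) $, and it is the defining identity (\ref{tr_act}) that forces the two to agree modulo $\mathfrak{I}\left( \Delta ,X\right) $ (Proposition \ref{tau_2}). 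The inclusion $\supseteq $ is the easy one: the quotient $\mathfrak{T}\left( \left\{ 2\right\} ,\Omega ^{\left( 2\right) },X^{\left( 2\right) }\right) /\left( I\left( \Delta ,X\right) \right) ^{\left( 2\right) }$ is the second-sort part of the $\Delta $-algebra $F_{\Delta }\left( \delta _{\Delta ,X}\left( X\right) \right) $, hence lies in $\Delta ^{\left( 2\right) }$ and therefore satisfies every member of $\mathfrak{I}\left( \Delta ^{\left( 2\right) },X^{\left( 2\right) }\right) $. For the inclusion $\subseteq $ — that a pure $\Omega ^{\left( 2\right) }$-identity valid on the second sort of every algebra of $\Delta $ already holds throughout $\Delta ^{\left( 2\right) }$ — I would invoke Proposition \ref{delta}, which realizes an arbitrary $B\in \Delta ^{\left( 2\right) }$ as the second-sort part $H^{\left( 2\right) }$ of an algebra $H=H^{\left( 1\right) }\uplus B\in \Delta $ equipped with the trivial action (\ref{tr_act}), so that validity on all such $H^{\left( 2\right) }$ transfers to all of $\Delta ^{\left( 2\right) }$. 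This realization, which legitimizes the passage between $\Delta $ and $\Delta ^{\left( 2\right) }$, is the step the whole argument turns on; granting the equality of congruences, (\ref{free_alg_var}) identifies the quotient with $F_{\Delta ^{\left( 2\right) }}$ on the generators $\delta _{\Delta ,X}^{\left( 2\right) }\left( X^{\left( 2\right) }\right) $ and completes the proof.
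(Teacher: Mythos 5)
Your argument is correct in substance but takes a genuinely different route from the paper's. The paper verifies the universal property of $\left( F_{\Delta }\left( \delta _{\Delta ,X}\left( X\right) \right) \right) ^{\left( 2\right) }$ directly: given an arbitrary $H^{\left( 2\right) }\in \Delta ^{\left( 2\right) }$ and a map $\varphi ^{\ast }$ on the generators, it assembles the two-sorted algebra $H=\left( F_{\Delta }\left( \delta _{\Delta ,X}\left( X\right) \right) \right) ^{\left( 1\right) }\uplus H^{\left( 2\right) }$ with the action (\ref{triv_act_axiom}), places $H$ in $\Delta $ by Proposition \ref{delta}, extends $\varphi ^{\ast }$ to a homomorphism out of $F_{\Delta }\left( \delta _{\Delta ,X}\left( X\right) \right) $ by freeness, and restricts to the second sort; uniqueness comes from the surjectivity statement of Corollary \ref{cor_tau_2}. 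You instead compute the kernel of the restricted epimorphism, identify $\left( I\left( \Delta ,X\right) \right) ^{\left( 2\right) }$ with $\mathfrak{I}\left( \Delta ^{\left( 2\right) },X^{\left( 2\right) }\right) $, and invoke (\ref{free_alg_var}). Both routes rest on Proposition \ref{delta} and Corollary \ref{cor_tau_2}; yours has the advantage of exhibiting the free algebra in the canonical form (\ref{free_alg_var}) with the uniqueness clause for free, at the cost of an extra lemma discussed below.

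Two remarks on your congruence equality. First, you have located the difficulty on the wrong side. The inclusion $\left( I\left( \Delta ,X\right) \right) ^{\left( 2\right) }\subseteq \mathfrak{I}\left( \Delta ^{\left( 2\right) },X^{\left( 2\right) }\right) $, which you present as the hard one, is immediate: $\Delta ^{\left( 2\right) }$ is by definition $Var\left( \left( I\left( \Delta \right) \right) ^{\left( 2\right) }\right) $, so $\mathfrak{I}\left( \Delta ^{\left( 2\right) }\right) \supseteq \left( I\left( \Delta \right) \right) ^{\left( 2\right) }\supseteq \left( I\left( \Delta ,X\right) \right) ^{\left( 2\right) }$ by item 4 of Claim \ref{cl}; your detour through Proposition \ref{delta} proves a true statement but is unnecessary. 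Second, the reverse inclusion, which you dispatch with a single ``hence'', is where the real content sits: you need that the quotient $\mathfrak{T}\left( \left\{ 2\right\} ,\Omega ^{\left( 2\right) },X^{\left( 2\right) }\right) /\left( I\left( \Delta ,X\right) \right) ^{\left( 2\right) }$ actually belongs to $\Delta ^{\left( 2\right) }$, i.e., that the second sort of an algebra of $\Delta $ is an algebra of $\Delta ^{\left( 2\right) }$. This is the second-sort analogue of Proposition \ref{(1)}, and it is nowhere stated or proved in the paper. It is true and provable by the same method (an identity from $\left( I\left( \Delta ,Y\right) \right) ^{\left( 2\right) }$ involves only variables of $Y^{\left( 2\right) }$ and only operations of $\Omega ^{\left( 2\right) }$, so a homomorphism $\mathfrak{T}\left( \left\{ 2\right\} ,\Omega ^{\left( 2\right) },Y^{\left( 2\right) }\right) \rightarrow A^{\left( 2\right) }$ extends via Proposition \ref{free_empty} to a homomorphism $\mathfrak{T}\left( Y^{\left( 2\right) }\right) \rightarrow A$), but you must supply this lemma yourself; the paper's own proof is arranged precisely so as not to need it, requiring Proposition \ref{(1)} only for the first sort.
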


\begin{proof}
We consider an arbitrary algebra of the variety $\Delta ^{\left( 2\right) }$%
. This algebra we denote by $H^{\left( 2\right) }$. We consider an arbitrary
mapping $\varphi ^{\ast }:\delta _{\Delta ,X}^{\left( 2\right) }\left(
X^{\left( 2\right) }\right) \rightarrow H^{\left( 2\right) }$. $\Delta $ is
a subject of the Proposition \ref{(1)}, so the algebra $\left( F_{\Delta
}\left( \delta _{\Delta ,X}\left( X\right) \right) \right) ^{\left( 1\right)
}$ which we denote by $H^{\left( 1\right) }$ is an algebra of the variety $%
\Delta ^{\left( 1\right) }$. We consider the set $H=H^{\left( 1\right)
}\uplus H^{\left( 2\right) }$ with operations of the signature $\Omega
^{\left( 1\right) }$ defined in $H^{\left( 1\right) }$ and operations of the
signature $\Omega ^{\left( 2\right) }$ defined in $H^{\left( 2\right) }$. We
define the action elements of $H^{\left( 1\right) }$ over elements of $%
H^{\left( 2\right) }$ by formula (\ref{triv_act_axiom}). For every $%
x^{\left( 1\right) }\in X^{\left( 1\right) }$ we define $\varphi ^{\ast
}\left( \delta _{\Delta ,X}^{\left( 1\right) }\left( x^{\left( 1\right)
}\right) \right) =\delta _{\Delta ,X}^{\left( 1\right) }\left( x^{\left(
1\right) }\right) \in \left( F_{\Delta }\left( \delta _{\Delta ,X}\left(
X\right) \right) \right) ^{\left( 1\right) }=H^{\left( 1\right) }$. By
Proposition \ref{delta} we have that $H\in \Delta $. So there exists a
homomorphism $\varphi :\left( F_{\Delta }\left( \delta _{\Delta ,X}\left(
X\right) \right) \right) \rightarrow H$, such that $\varphi _{\mid \delta
_{\Delta ,X}\left( X\right) }=\left( \varphi ^{\ast }\right) _{\mid \delta
_{\Delta ,X}\left( X\right) }$.%
\begin{equation*}
\varphi ^{\left( 2\right) }:\left( F_{\Delta }\left( \delta _{\Delta
,X}\left( X\right) \right) \right) ^{\left( 2\right) }=\delta _{\Delta
,X}^{\left( 2\right) }\left( \mathfrak{T}\left( \left\{ 2\right\} ,\Omega
^{\left( 2\right) },X^{\left( 2\right) }\right) \right) \rightarrow
H^{\left( 2\right) }
\end{equation*}%
is a homomorphism of one-sorted algebras with signature $\Omega ^{\left(
2\right) }$, such that\linebreak $\left( \varphi ^{\left( 2\right) }\right)
_{\mid \delta _{\Delta ,X}^{\left( 2\right) }\left( X^{\left( 2\right)
}\right) }=\varphi ^{\ast }$, i.e., the equality $\left( \varphi ^{\left(
2\right) }\delta _{\Delta ,X}^{\left( 2\right) }\right) \left( x^{\left(
2\right) }\right) =\left( \varphi ^{\ast }\delta _{\Delta ,X}^{\left(
2\right) }\right) \left( x^{\left( 2\right) }\right) $ holds for every $%
x^{\left( 2\right) }\in X^{\left( 2\right) }$.

We suppose that there is an other homomorphism $\chi :$ $\delta _{\Delta
,X}^{\left( 2\right) }\left( \mathfrak{T}\left( \left\{ 2\right\} ,\Omega
^{\left( 2\right) },X^{\left( 2\right) }\right) \right) \rightarrow
H^{\left( 2\right) }$ of one-sorted algebras with signature $\Omega ^{\left(
2\right) }$ such that $\chi _{\mid \delta _{\Delta ,X}^{\left( 2\right)
}\left( X^{\left( 2\right) }\right) }=\varphi ^{\ast }$, i.e., the equality $%
\left( \chi \delta _{\Delta ,X}^{\left( 2\right) }\right) \left( x^{\left(
2\right) }\right) =\left( \varphi ^{\ast }\delta _{\Delta ,X}^{\left(
2\right) }\right) \left( x^{\left( 2\right) }\right) $ holds for every $%
x^{\left( 2\right) }\in X^{\left( 2\right) }$. By Proposition \ref%
{free_empty} we have that $\varphi ^{\left( 2\right) }\delta _{\Delta
,X}^{\left( 2\right) }=\chi \delta _{\Delta ,X}^{\left( 2\right) }$ and,
because $\delta _{\Delta ,X}^{\left( 2\right) }$ is an epimorphism, we
conclude that $\varphi ^{\left( 2\right) }=\chi $.
\end{proof}

\begin{corollary}
\label{delta_2_nondeg}If $\Delta ^{\left( 2\right) }$ is a nondegenerate
variety, then $\Delta $ is $2$-nondegenerate variety.
\end{corollary}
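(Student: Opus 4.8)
The plan is to reduce the $2$-nondegeneracy of $\Delta$ to the nondegeneracy of the one-sorted variety $\Delta^{\left( 2\right) }$, using the free-algebra description furnished by Proposition \ref{delta_2} together with the characterization of nondegeneracy through injectivity of the natural epimorphism on generators (Proposition \ref{X_injection}).

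First I would fix the set $X=X^{\left( 2\right) }=\left\{ x_{1}^{\left( 2\right) },x_{2}^{\left( 2\right) }\right\} $ (with $X^{\left( 1\right) }=\varnothing $) and consider the natural epimorphism $\delta _{\Delta ,X}:\mathfrak{T}\left( X\right) \rightarrow F_{\Delta }\left( \delta _{\Delta ,X}\left( X\right) \right) $. By Proposition \ref{X_injection} applied to the variety $\Delta $ and the sort $i=2$, it suffices to show that $\delta _{\Delta ,X}$ is injective on $X^{\left( 2\right) }$, i.e.\ that $\delta _{\Delta ,X}\left( x_{1}^{\left( 2\right) }\right) \neq \delta _{\Delta ,X}\left( x_{2}^{\left( 2\right) }\right) $. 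Indeed, since $\mathfrak{I}\left( \Delta ,X\right) =\ker \delta _{\Delta ,X}$, this inequality is exactly the statement that $\left( x_{1}^{\left( 2\right) }=x_{2}^{\left( 2\right) }\right) \notin \mathfrak{I}\left( \Delta \right) $, which is the definition of $2$-nondegeneracy of $\Delta $.

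The key step is to invoke Proposition \ref{delta_2}: the second-sort part $\left( F_{\Delta }\left( \delta _{\Delta ,X}\left( X\right) \right) \right) ^{\left( 2\right) }=\delta _{\Delta ,X}^{\left( 2\right) }\left( \mathfrak{T}\left( \left\{ 2\right\} ,\Omega ^{\left( 2\right) },X^{\left( 2\right) }\right) \right) $ is a free algebra of the one-sorted variety $\Delta ^{\left( 2\right) }$ with set of free generators $\delta _{\Delta ,X}^{\left( 2\right) }\left( X^{\left( 2\right) }\right) $. Consequently, the restriction of $\delta _{\Delta ,X}^{\left( 2\right) }$ to the one-sorted term algebra $\mathfrak{T}\left( \left\{ 2\right\} ,\Omega ^{\left( 2\right) },X^{\left( 2\right) }\right) $ plays, up to the canonical identification (\ref{free_alg_var}), the role of the natural epimorphism of $\Delta ^{\left( 2\right) }$ onto this free algebra. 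Since $\Delta ^{\left( 2\right) }$ is nondegenerate by hypothesis, Proposition \ref{X_injection} applied to $\Delta ^{\left( 2\right) }$ guarantees that this natural epimorphism is injective on the generating variables $X^{\left( 2\right) }$; hence $\delta _{\Delta ,X}^{\left( 2\right) }\left( x_{1}^{\left( 2\right) }\right) \neq \delta _{\Delta ,X}^{\left( 2\right) }\left( x_{2}^{\left( 2\right) }\right) $. As $x_{1}^{\left( 2\right) },x_{2}^{\left( 2\right) }$ are of sort $2$ we have $\delta _{\Delta ,X}\left( x_{j}^{\left( 2\right) }\right) =\delta _{\Delta ,X}^{\left( 2\right) }\left( x_{j}^{\left( 2\right) }\right) $, so the two images are distinct, completing the argument.

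The one point that needs care — the main, though modest, obstacle — is the identification in the key step: one must confirm that the restriction of $\delta _{\Delta ,X}^{\left( 2\right) }$ to $\mathfrak{T}\left( \left\{ 2\right\} ,\Omega ^{\left( 2\right) },X^{\left( 2\right) }\right) $ genuinely is the natural epimorphism of $\Delta ^{\left( 2\right) }$, so that Proposition \ref{X_injection} may be applied to $\Delta ^{\left( 2\right) }$ legitimately. This is precisely what Proposition \ref{delta_2} provides, so no new work is required. Alternatively, one could argue by contraposition: if $\Delta $ were $2$-degenerate, then by Proposition \ref{les2} the free algebra $\left( F_{\Delta }\left( \delta _{\Delta ,X}\left( X\right) \right) \right) ^{\left( 2\right) }$ would have at most one element of sort $2$, forcing its two generating images to coincide and hence, again by Proposition \ref{X_injection}, making $\Delta ^{\left( 2\right) }$ degenerate, contrary to hypothesis.
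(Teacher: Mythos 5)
Your overall strategy is reasonable, but there is a circularity at precisely the step you single out as the key one. You assert that Proposition \ref{delta_2} already identifies the restriction of $\delta _{\Delta ,X}^{(2)}$ to $\mathfrak{T}(\{2\},\Omega ^{(2)},X^{(2)})$ with the natural epimorphism $\delta _{\Delta ^{(2)},X^{(2)}}$ of the one-sorted variety $\Delta ^{(2)}$, so that Proposition \ref{X_injection} can be applied to $\Delta ^{(2)}$. It does not. Proposition \ref{delta_2} only says that the image is a free algebra of $\Delta ^{(2)}$ with free generating \emph{set} $\delta _{\Delta ,X}^{(2)}(X^{(2)})$; a priori this set could be a singleton (if the two images coincide), in which case the image is merely the free algebra of $\Delta ^{(2)}$ on one generator, the restriction of $\delta _{\Delta ,X}^{(2)}$ is a non-injective surjection from the term algebra on two variables onto it, and nothing in Proposition \ref{delta_2} is contradicted even when $\Delta ^{(2)}$ is nondegenerate. (Compare the variety of all sets: it is nondegenerate, its free algebra on a one-element set has one element, and it receives a non-injective surjection from any two-element set.) Saying that the restriction \emph{is} the natural epimorphism on the two-variable term algebra amounts to saying that its kernel equals $\mathfrak{I}(\Delta ^{(2)},X^{(2)})$ and in particular does not contain the pair $(x_{1}^{(2)},x_{2}^{(2)})$ --- which is essentially the statement to be proved. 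The same circularity undermines your alternative contrapositive argument at the end: Propositions \ref{les2} and \ref{X_injection} yield degeneracy of $\Delta ^{(2)}$ only after the identification with the natural epimorphism of $\Delta ^{(2)}$ on two variables is in place.

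The missing ingredient is a one-line observation, and it is in fact the paper's entire proof. The kernel of the restriction of $\delta _{\Delta ,X}^{(2)}$ is $(I(\Delta ,X))^{(2)}=\mathfrak{I}(\Delta ,X)\cap \bigl( \mathfrak{T}(\{2\},\Omega ^{(2)},X^{(2)})\times \mathfrak{T}(\{2\},\Omega ^{(2)},X^{(2)})\bigr) $, and $\Delta ^{(2)}$ is by definition the variety determined by the identities $(I(\Delta ))^{(2)}$, so by the inclusion $\mathfrak{I}(Var(I))\supseteq I$ of Claim \ref{cl} every element of $(I(\Delta ,X))^{(2)}$ is an identity of $\Delta ^{(2)}$. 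Hence if $(x_{1}^{(2)}=x_{2}^{(2)})\in \mathfrak{I}(\Delta )$, then, since both sides are terms of $\mathfrak{T}(\{2\},\Omega ^{(2)},X^{(2)})$, this pair lies in $(I(\Delta ))^{(2)}$ and is therefore an identity of $\Delta ^{(2)}$, contradicting nondegeneracy. Adding this closes your argument; but note that this observation by itself proves the corollary directly, so the detour through Propositions \ref{delta_2} and \ref{X_injection} becomes superfluous.
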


\begin{proof}
$\left( x_{1}^{\left( 2\right) },x_{2}^{\left( 2\right) }\right) \in 
\mathfrak{T}\left( \left\{ 2\right\} ,\Omega ^{\left( 2\right) },\left\{
x_{1}^{\left( 2\right) },x_{2}^{\left( 2\right) }\right\} \right) \times 
\mathfrak{T}\left( \left\{ 2\right\} ,\Omega ^{\left( 2\right) },\left\{
x_{1}^{\left( 2\right) },x_{2}^{\left( 2\right) }\right\} \right) $. So, if $%
\left( x_{1}^{\left( 2\right) },x_{2}^{\left( 2\right) }\right) \in 
\mathfrak{I}\left( \Delta \right) $, then $\left( x_{1}^{\left( 2\right)
},x_{2}^{\left( 2\right) }\right) \in \left( I\left( \Delta \right) \right)
^{\left( 2\right) }$.
\end{proof}

\begin{corollary}
\label{delta_2_IBN}If $\Delta ^{\left( 2\right) }$ is an IBN-variety, then $%
\Delta $ is a $2$-IBN variety.
\end{corollary}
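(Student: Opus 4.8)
The plan is to mirror the proof of Proposition \ref{1_IBN}, using Proposition \ref{delta_2} in place of the free-algebra identification carried out there for the first sort. The essential content --- that the second-sort part of a free algebra of $\Delta$ is itself a free algebra of $\Delta^{(2)}$ --- has already been established in Proposition \ref{delta_2}, so what remains is chiefly bookkeeping.

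First I would reduce to free algebras of the standard form (\ref{free_alg_var}): by Proposition \ref{up_to_isomorphism} it suffices to consider an isomorphism $\varphi : F_{\Delta}\left( \delta _{\Delta ,X}\left( X\right) \right) \to F_{\Delta}\left( \delta _{\Delta ,Y}\left( Y\right) \right)$ for suitable $X = X^{(1)} \uplus X^{(2)}$ and $Y = Y^{(1)} \uplus Y^{(2)}$, and to prove that the sort-$2$ parts of the two generating sets have equal cardinality.

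Next I would pass to the second sort. By equation (\ref{hom_1}) the isomorphism $\varphi$ preserves sorts, hence restricts to a bijection $\varphi^{(2)} : \left( F_{\Delta}\left( \delta _{\Delta ,X}\left( X\right) \right) \right)^{(2)} \to \left( F_{\Delta}\left( \delta _{\Delta ,Y}\left( Y\right) \right) \right)^{(2)}$ (whose inverse is $\left( \varphi^{-1} \right)^{(2)}$). Since the operations of $\Omega^{(2)}$ act on and return elements of the second sort only (Condition \ref{act_separ}), the map $\varphi^{(2)}$ respects them, so it is an isomorphism of one-sorted algebras with signature $\Omega^{(2)}$. By Proposition \ref{delta_2} both its domain and its codomain are free algebras of the variety $\Delta^{(2)}$, with sets of free generators $\delta_{\Delta ,X}^{(2)}\left( X^{(2)} \right)$ and $\delta_{\Delta ,Y}^{(2)}\left( Y^{(2)} \right)$ respectively.

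Finally I would invoke the hypothesis that $\Delta^{(2)}$ is an IBN-variety to conclude $\left| \delta_{\Delta ,X}^{(2)}\left( X^{(2)} \right) \right| = \left| \delta_{\Delta ,Y}^{(2)}\left( Y^{(2)} \right) \right|$. Since $\delta_{\Delta ,X}^{(2)}\left( X^{(2)} \right) = \left( \delta_{\Delta ,X}\left( X \right) \right)^{(2)}$ is exactly the sort-$2$ part of the set of free generators of $F_{\Delta}\left( \delta _{\Delta ,X}\left( X\right) \right)$, and likewise for $Y$, this is precisely the equality demanded by Definition \ref{IBN-variety} for $i = 2$, so $\Delta$ is a $2$-IBN-variety. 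I do not expect a genuine obstacle here: the one point requiring care is checking that $\varphi^{(2)}$ is a well-defined $\Omega^{(2)}$-isomorphism, and this follows immediately from sort-preservation and the separation of the signature; the conceptually hard step has already been absorbed into Proposition \ref{delta_2}.
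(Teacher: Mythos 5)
Your proposal is correct and follows essentially the same route as the paper: restrict the isomorphism to the second sort, identify the sort-$2$ parts as free algebras of $\Delta^{(2)}$ via Proposition \ref{delta_2}, and apply the IBN property of $\Delta^{(2)}$. The extra details you supply (the reduction via Proposition \ref{up_to_isomorphism} and the check that $\varphi^{(2)}$ is an $\Omega^{(2)}$-isomorphism) are exactly the steps the paper leaves implicit.
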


\begin{proof}
We suppose that there exists an isomorphism $\varphi :\left( F_{\Delta
}\left( \delta _{\Delta ,X}\left( X\right) \right) \right) \rightarrow
\left( F_{\Delta }\left( \delta _{\Delta ,Y}\left( Y\right) \right) \right) $%
, where $X=X^{\left( 1\right) }\uplus X^{\left( 2\right) },Y=Y^{\left(
1\right) }\uplus Y^{\left( 2\right) }\subset X_{0}$. Therefore $\varphi
^{\left( 2\right) }:\left( F_{\Delta }\left( \delta _{\Delta ,X}\left(
X\right) \right) \right) ^{\left( 2\right) }\rightarrow \left( F_{\Delta
}\left( \delta _{\Delta ,Y}\left( Y\right) \right) \right) ^{\left( 2\right)
}$ is an isomorphism. $\left( F_{\Delta }\left( \delta _{\Delta ,X}\left(
X\right) \right) \right) ^{\left( 2\right) }$ is a free algebra of the
variety $\Delta ^{\left( 2\right) }$ with the set of generators $\delta
_{\Delta ,X}^{\left( 2\right) }\left( X^{\left( 2\right) }\right) $, $\left(
F_{\Delta }\left( \delta _{\Delta ,Y}\left( Y\right) \right) \right)
^{\left( 2\right) }$ is a free algebra of the variety $\Delta ^{\left(
2\right) }$ with the set of generators $\delta _{\Delta ,Y}^{\left( 2\right)
}\left( Y^{\left( 2\right) }\right) $, $\Delta ^{\left( 2\right) }$ is an
IBN-variety, so $\left\vert \delta _{\Delta ,X}^{\left( 2\right) }\left(
X^{\left( 2\right) }\right) \right\vert =\left\vert \delta _{\Delta
,Y}^{\left( 2\right) }\left( Y^{\left( 2\right) }\right) \right\vert $.
\end{proof}

\setcounter{corollary}{0}

Now we will prove two propositions, which we will use in the consideration
of the following three examples.

\begin{proposition}
\label{nondeg_sets}The variety of all sets has no nondegenerate subvarieties.
\end{proposition}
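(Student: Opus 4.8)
The plan is to exploit the fact that the variety of all sets is one-sorted with \emph{empty} signature, $\Omega=\varnothing$, which makes its term algebras as small as possible. First I would record that, since $\Omega$ contains no operations and in particular no constants, for every $X\subset X_0$ the algebra of terms reduces to its generators: $\mathfrak{T}(X)=X$. Consequently every identity in this signature is simply a pair $(x_i,x_j)$ with $x_i,x_j\in X$; there are no compound terms to compare, so this exhausts the identities available.

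The second step isolates the only possibilities. Such an identity is trivial when $x_i=x_j$, in which case it holds in every set; the remaining case is a nontrivial identity $x_i=x_j$ with $x_i\neq x_j$. The key point, and essentially the whole content of the proof, is that a single nontrivial identity already forces degeneracy. To see this, given a subvariety $\Delta$ with $(x_i=x_j)\in\mathfrak{I}(\Delta)$ and $x_i\neq x_j$, I would choose (via Proposition \ref{free_empty}) a homomorphism $\varphi\colon\mathfrak{T}(X)\to\mathfrak{T}(Y)$ into a term algebra with $x_1,x_2\in Y$ satisfying $\varphi(x_i)=x_1$ and $\varphi(x_j)=x_2$, and then apply Claim \ref{cl}, item \ref{c_7}, exactly as in the proof of Proposition \ref{X_injection}, to obtain $(x_1=x_2)\in\mathfrak{I}(\Delta,Y)\subseteq\mathfrak{I}(\Delta)$. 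Thus a \emph{nondegenerate} $\Delta$ can admit no nontrivial identity in $\mathfrak{I}(\Delta)$.

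Finally I would assemble the conclusion. If $\Delta$ is a subvariety of the variety of all sets and $\Delta$ is nondegenerate, the previous step shows that $\mathfrak{I}(\Delta)$ contains only trivial identities, hence coincides with the ideal of the whole variety of all sets; by Claim \ref{cl}, item \ref{c_6}, this gives $\Delta=Var(\mathfrak{I}(\Delta))$ equal to the variety of all sets itself. Therefore every nondegenerate subvariety is the full variety, i.e.\ there is no \emph{proper} nondegenerate subvariety, which is the assertion. I do not anticipate a genuine obstacle: the argument is short once $\mathfrak{T}(X)=X$ is established, and the only place demanding a little care is the renaming step, where one must ensure that the target term algebra actually contains the generators $x_1,x_2$ so that Claim \ref{cl}, item \ref{c_7}, applies cleanly.
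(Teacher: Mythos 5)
Your proposal is correct and follows the same route as the paper: since the signature is empty, $\mathfrak{T}(X)=X$ and the only possible nontrivial identities are $x_i=x_j$ with $i\neq j$, and any such identity forces degeneracy (the paper asserts this directly, while you justify it with the renaming endomorphism as in Proposition \ref{X_injection}). The extra detail you supply, including the final step identifying a nondegenerate subvariety with the full variety via Claim \ref{cl}, item \ref{c_6}, is exactly what the paper's terse two-sentence proof leaves implicit.
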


\begin{proof}
Nontrivial identities in the variety of all sets have form $x_{i}=$ $x_{j}$,
where $i\neq j$. But, if $\Theta $ is a subvariety of the variety of all
sets and $\left( x_{i},x_{j}\right) \in \mathfrak{I}\left( \Theta \right) $,
where $i\neq j$, then $\Theta $ is a degenerate variety.
\end{proof}

\begin{proposition}
\label{nondeg_vectror_spases}The variety of all vector spaces over some
fixed field $k$ has no nondegenerate subvarieties.
\end{proposition}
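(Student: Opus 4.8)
The plan is to follow the pattern of Proposition~\ref{nondeg_sets}: I will identify all nontrivial identities that can hold in the variety $V$ of all vector spaces over $k$ and show that each of them already forces degeneracy, so that no proper subvariety of $V$ can be nondegenerate (equivalently, the only nondegenerate subvariety of $V$ is $V$ itself). First I would recall from Example~\ref{vector_space} that for a finite $X=\{x_1,\dots,x_r\}$ the free algebra $F_V(X)$ is the vector space with basis $X$, so every term in $x_1,\dots,x_r$ equals in $F_V(X)$ a unique linear combination $\lambda_1 x_1+\dots+\lambda_r x_r$ with $\lambda_i\in k$. Hence an identity $(f_1=f_2)$ lies in $\mathfrak{I}(V,X)$ exactly when $f_1$ and $f_2$ have the same coefficients, and a nontrivial identity (one not already in $\mathfrak{I}(V,X)$) is, after moving everything to one side using addition and multiplication by $-1\in k$, of the form $c_1 x_1+\dots+c_r x_r = 0$ with $c_j\neq 0$ for at least one $j$.

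Next, let $\Delta$ be a proper subvariety of $V$. By Claim~\ref{cl}, item~\ref{c_6}, a variety is recovered from its identities, so $\Delta\neq V$ forces $\mathfrak{I}(\Delta)\supsetneq\mathfrak{I}(V)$; thus for some finite $X$ the set $\mathfrak{I}(\Delta,X)$ contains a nontrivial identity $c_1 x_1+\dots+c_r x_r=0$, and after renumbering I may assume $c_1\neq 0$. I would then use the closure of $\mathfrak{I}(\Delta,X)$ under substitution, namely Claim~\ref{cl}, item~\ref{c_7}, applied to the endomorphism of $\mathfrak{T}(X)$ sending $x_1\mapsto x_1$ and $x_j\mapsto 0$ for $j\neq 1$ (this exists by Proposition~\ref{free_empty}, since a homomorphism out of the term algebra may send generators to the constant $0$). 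Because $c_j\cdot 0=0$ in $V$, this yields $(c_1 x_1 = 0)\in\mathfrak{I}(\Delta,X)$.

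Finally, since $\mathfrak{I}(\Delta,X)$ is a congruence and $k$ is a field, I would apply the unary operation ``multiplication by $c_1^{-1}$'' to both sides of $c_1 x_1=0$; using $c_1^{-1}(c_1 x_1)=x_1$ and $c_1^{-1}\cdot 0=0$ (vector-space axioms, valid in $V\supseteq\Delta$) this gives $(x_1=0)\in\mathfrak{I}(\Delta,X)$. The identity $x_1=0$ asserts that every vector of every algebra of $\Delta$ is zero, so $x_2=0$ holds as well and $(x_1=x_2)\in\mathfrak{I}(\Delta)$; that is, $\Delta$ is degenerate, as required.

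The step I expect to need the most care is checking that the two manipulations, substituting the constant $0$ for the remaining generators and then rescaling by $c_1^{-1}$, are genuinely covered by the quoted results: one must confirm that sending a generator to $0$ defines a bona fide homomorphism of term algebras (so that item~\ref{c_7} applies) and that multiplication by $c_1^{-1}$ is literally one of the unary operations in the signature of $V$, so that $c_1 x_1=0$ upgrades to the identity $x_1=0$ rather than holding only for particular vectors. Everything else is routine bookkeeping with linear combinations.
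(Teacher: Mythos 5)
Your proof is correct and follows essentially the same route as the paper: the paper likewise reduces a nontrivial identity to a single linear relation $\lambda _{i_{1}}y_{i_{1}}+\ldots +\lambda _{i_{n}}y_{i_{n}}=0$, kills all but one generator by an endomorphism sending them to $0$, and uses invertibility of the surviving coefficient to force $y_{i_{1}}=0$ and hence degeneracy. The only cosmetic difference is that the paper delegates the substitution-and-rescaling step to the proof of Proposition~\ref{nondeg_linalg} (via Proposition~\ref{rel_hom}), whereas you spell it out directly using Claim~\ref{cl}, item~\ref{c_7}.
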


\begin{proof}
A vector space $F\left( Y\right) $ with basis $Y$ is a free algebra of the
variety of all vector spaces. Nontrivial identities in this variety have
form $\lambda _{i_{1}}y_{i_{1}}+\ldots +\lambda _{i_{n}}y_{i_{n}}=0$, where $%
y_{i_{1}},\ldots ,y_{i_{n}}\in Y$, $\lambda _{i_{1}},\ldots ,\lambda
_{i_{n}}\in k\setminus \left\{ 0\right\} $. We conclude as in the Proof of
the Proposition \ref{nondeg_linalg} that the subvariety of the variety of
all vector spaces defined by this identity is a degenerate variety.
\end{proof}

The following three examples satisfy Condition \ref{act_separ}.

\begin{example}
\label{semigr_actions}Varieties of actions of semigroups over sets.
\end{example}

In this example we consider varieties of actions of semigroups over sets.
Actions of semigroups over sets are $2$-sorted algebras, i.e., $\Gamma
=\left\{ 1,2\right\} $. The first sort is a sort of elements of semigroups,
the second sort is a sort of elements of sets. $\Omega =\left\{ \cdot ,\circ
\right\} $: $\cdot $ is a multiplication in the semigroup, $\circ $ is an
action of the elements of the semigroup over the elements of the set. $\tau
_{\cdot }=\left( 1,1;1\right) $, $\tau _{\circ }=\left( 1,2;2\right) $.

We denote by $\Theta $ some nondegenerate variety of actions of semigroups
over sets.

If $\Theta ^{\left( 1\right) }$ is an IBN-variety, in particular, one from
varieties considered in Examples \ref{semi} and \ref{comm_semi} then, by
Proposition \ref{1_IBN}, $\Theta $ is an $1$-IBN-variety.

As a variety $\Lambda $ we consider the variety of trivial action, i.e., the
variety defined by identity%
\begin{equation*}
x^{\left( 1\right) }\circ x^{\left( 2\right) }=x^{\left( 2\right) }\text{.}
\end{equation*}%
This is identity (\ref{tr_act}) with $s\left( x^{\left( 2\right) }\right)
=x^{\left( 2\right) }$.

If $\Delta ^{\left( 2\right) }$ is a nondegenerate variety, then, by
Proposition \ref{nondeg_sets}, $\Delta ^{\left( 2\right) }$ is a variety of
all sets. Hence, by Example \ref{sets}, $\Delta ^{\left( 2\right) }$ is an
IBN-variety. By Corollaries \ref{delta_2_nondeg} and \ref{delta_2_IBN} from
Proposition \ref{delta_2} we have that $\Delta $ is a $2$-nondegenerate $2$%
-IBN-variety. By Theorem \ref{main} we can conclude in this case that $%
\Theta $ is an IBN-variety.

\begin{example}
\label{Lie_rep}Varieties of \ representations of Lie algebras over a fixed
field $k$.
\end{example}

In this example we consider varieties of representations of Lie algebras
over some fixed field $k$. A representation of Lie algebras we consider as $2
$-sorted algebra. The first sort is a sort of elements of some Lie algebra
over a fixed field $k$, the second sort is a sort of vectors of some vector
space over same field $k$. As in Example \ref{vector_space}, a
multiplication of elements of the Lie algebra and vectors by any scalar $%
\lambda \in k$ we consider as two different unary operations. All operations
over vectors of the vector space have all arguments of the sort $2$ and give
a result of the sort $2$. All operations over elements of the Lie algebra
have all arguments of the sort $1$ and give a result of the sort $1$. An
operation of an action of elements of the Lie algebra over vectors of the
vector space, which we denote by $\circ $, has a type $\tau _{\circ }=\left(
1,2;2\right) $.

We denote by $\Theta $ some nondegenerate variety of representations Lie
algebras over the field $k$. By Proposition \ref{1_nondeg} the variety $%
\Theta ^{\left( 1\right) }$ is nondegenerate. So, $\Theta ^{\left( 1\right)
} $ is a nondegenerate variety of Lie algebras over the field $k$ and, by
Example \ref{lin_alg}, $\Theta ^{\left( 1\right) }$ is an IBN-variety.
Hence, by Proposition \ref{1_IBN}, $\Theta $ is an $1$-IBN-variety.

As a variety $\Lambda $ we consider the variety of null action, i.e., the
variety defined by identity%
\begin{equation*}
x^{\left( 1\right) }\circ x^{\left( 2\right) }=0^{\left( 2\right) }\text{.}
\end{equation*}%
This is identity (\ref{tr_act}) with $s\left( x^{\left( 2\right) }\right)
=0^{\left( 2\right) }$.

\begin{proposition}
\label{Lie_delta_2_nondeg}$\Delta ^{\left( 2\right) }$ is the variety of all
vector spaces over the field $k$.
\end{proposition}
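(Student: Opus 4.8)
The plan is to prove the proposition by establishing both inclusions between $\Delta^{(2)}$ and the variety $\mathcal{V}$ of all vector spaces over $k$, regarded as one-sorted algebras with signature $\Omega^{(2)}$. For the inclusion $\Delta^{(2)}\subseteq\mathcal{V}$ I would argue as in the easy halves of the preceding results: every algebra $A\in\Delta\subseteq\Theta$ is a representation of a Lie algebra, so $A^{(2)}$ is a vector space. Hence each vector-space axiom, read as an identity in $\mathfrak{T}\left(\left\{2\right\},\Omega^{(2)},X^{(2)}\right)$, lies in $\left(I\left(\Delta\right)\right)^{(2)}$, and therefore every algebra of $\Delta^{(2)}$ is a vector space.

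The real content is the reverse inclusion $\mathcal{V}\subseteq\Delta^{(2)}$, and the key idea is to exhibit a single $\Delta$-algebra whose second sort is an honest free vector space. First I would take $X=X^{(2)}$ with $X^{(1)}=\varnothing$ and examine $F_{\Theta}\left(\delta_{\Theta,X}\left(X\right)\right)$. By Condition \ref{act_separ} the operations of $\Omega^{(1)}$ never leave the first sort and $\circ$ never enters it, so the first sort of this free algebra is the subalgebra generated by the empty set, hence trivial (at most the zero element). Consequently the defining identity (\ref{tr_act}) of $\Lambda$ holds in it---vacuously if the first sort is empty, and by additivity of $\circ$ in its first argument, $0\circ v=\left(0+0\right)\circ v=0\circ v+0\circ v$, otherwise---so $F_{\Theta}\left(X^{(2)}\right)\in\Lambda$ and thus lies in $\Theta\cap\Lambda=\Delta$.

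Next I would show this algebra's second sort is free. Since the first sort contributes nothing through $\circ$, the second sort is a vector space generated by the images of $X^{(2)}$, and I claim these are linearly independent. If $\sum_{i}c_{i}x_{i}^{(2)}=0$ were a nontrivial relation in $\mathfrak{I}\left(\Theta\right)$ with $c_{j}\neq 0$, I would apply the endomorphism of $\mathfrak{T}\left(X^{(2)}\right)$ sending $x_{j}^{(2)}\mapsto x_{1}^{(2)}$ and every other generator to $0^{(2)}$; by Claim \ref{cl}, item \ref{c_7}, this yields $c_{j}x_{1}^{(2)}=0$, hence $x_{1}^{(2)}=0$, in $\mathfrak{I}\left(\Theta\right)$, contradicting the $2$-nondegeneracy of $\Theta$. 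Thus $F_{\Theta}\left(X^{(2)}\right)^{(2)}$ is the free vector space on $X^{(2)}$. Because $F_{\Theta}\left(X^{(2)}\right)$ already lies in $\Delta$, it serves as $F_{\Delta}\left(X^{(2)}\right)$, whose second sort is, by Proposition \ref{delta_2}, the free $\Delta^{(2)}$-algebra on $\delta_{\Delta,X}^{(2)}\left(X^{(2)}\right)$. So the free $\Delta^{(2)}$-algebra on every generating set is the corresponding free vector space; since varieties are closed under quotients and every vector space is a quotient of a free one, $\mathcal{V}\subseteq\Delta^{(2)}$, completing the proof.

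The hard part will be the linear-independence step---equivalently, the assertion that passing to the null-action subvariety creates no new second-sort linear identities beyond the vector-space axioms. What unlocks it is the purely formal observation that, with no first-sort generators, additivity of the action forces $0\circ v=0$, so the free algebra on second-sort generators alone already satisfies the null-action law; this produces the required free second sort directly and avoids any circular appeal to the structure of $\Delta^{(2)}$ that we are trying to determine.
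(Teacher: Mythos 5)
Your proof is correct, but it takes a genuinely different route from the paper's. The paper invokes the structure theorem for free algebras of an arbitrary variety $\Xi $ of representations of Lie algebras (cited from \cite{TsurkovSubvarLie} and \cite{Simonian}), $\left( F_{\Xi }\left( X\right) \right) ^{\left( 2\right) }=\bigoplus\limits_{x^{\left( 2\right) }\in X^{\left( 2\right) }}\left( A\left( X^{\left( 1\right) }\right) /B_{\Xi }\left( X^{\left( 1\right) }\right) \right) x^{\left( 2\right) }$: it identifies $B_{\Lambda }\left( X^{\left( 1\right) }\right) $ with the maximal ideal $\left\langle X^{\left( 1\right) }\right\rangle $ (whose quotient is $k$), gets $B_{\Theta }\left( X^{\left( 1\right) }\right) \subseteq \left\langle X^{\left( 1\right) }\right\rangle $ from nondegeneracy, computes $B_{\Delta }=\left\langle B_{\Theta },B_{\Lambda }\right\rangle =\left\langle X^{\left( 1\right) }\right\rangle $ via Claim \ref{cl}, item \ref{c_8}, and so obtains $\left( F_{\Delta }\left( X\right) \right) ^{\left( 2\right) }\cong \bigoplus\limits_{x^{\left( 2\right) }\in X^{\left( 2\right) }}kx^{\left( 2\right) }$ for \emph{every} $X$, finishing with Propositions \ref{les2} and \ref{nondeg_vectror_spases}. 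You avoid that machinery by specializing to $X^{\left( 1\right) }=\varnothing $, where the first sort of the free $\Theta $-algebra is the zero Lie algebra and bilinearity of the action forces $0\circ v=0^{\left( 2\right) }$, so that this free algebra already satisfies (\ref{tr_act}) and is therefore the free $\Delta $-algebra on the same generators; your linear-independence step is the same endomorphism trick as in Proposition \ref{nondeg_linalg}, and Proposition \ref{delta_2} (which does hold for $X^{\left( 1\right) }=\varnothing $, and free algebras on equipotent generating sets are isomorphic) converts the computation into a statement about the free $\Delta ^{\left( 2\right) }$-algebras. Your version is shorter and self-contained; the paper's yields $F_{\Delta }\left( X\right) $ for arbitrary $X^{\left( 1\right) }$ and runs in deliberate parallel with Example \ref{group_repr}, where the augmentation ideal plays the role of $\left\langle X^{\left( 1\right) }\right\rangle $. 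Two minor points: the cancellation in $0\circ v=0\circ v+0\circ v$ uses that the second sort is an abelian group, which is indeed among the identities of $\Theta $; and your closing appeal to "every vector space is a quotient of a free one" should be applied identity-by-identity (each element of $\left( I\left( \Delta \right) \right) ^{\left( 2\right) }$ involves only finitely many variables), or simply replaced by the paper's own finish via Propositions \ref{les2} and \ref{nondeg_vectror_spases}. Neither point is a real gap.
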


\begin{proof}
In the proof of this proposition we denote elements $\delta _{\Xi ,X}\left(
x\right) $, $\delta _{\Lambda ,X}\left( x\right) $, $\delta _{\Theta
,X}\left( x\right) $, $\delta _{\Delta ,X}\left( x\right) $ and $x\in
X\subset X_{0}$ by same symbol $x$. This cannot cause confusion here.

$\Lambda $ is a nondegenerate variety because for every Lie algebras over
the field $k$ we can define a null action over an arbitrary vector space
over the field $k$.

In \cite[Theorem 3.1.]{TsurkovSubvarLie} (see also \cite{Simonian}) was
proved that for every variety $\Xi $ of representations Lie algebras over
the field $k$ the free algebra $F_{\Xi }\left( X\right) $ of this variety
generated by set $X=X^{\left( 1\right) }\uplus X^{\left( 2\right) }\subset
X_{0}$ has this form:%
\begin{equation*}
\left( F_{\Xi }\left( X\right) \right) ^{\left( 1\right) }=L_{\Xi }\left(
X^{\left( 1\right) }\right) =L\left( X^{\left( 1\right) }\right) /I_{\Xi
}\left( X^{\left( 1\right) }\right) ,
\end{equation*}%
\begin{equation*}
\left( F_{\Xi }\left( X\right) \right) ^{\left( 2\right)
}=\bigoplus\limits_{x^{\left( 2\right) }\in X^{\left( 2\right) }}\left(
A_{\Xi }\left( X^{\left( 1\right) }\right) x^{\left( 2\right) }\right) ,
\end{equation*}%
where $L\left( X^{\left( 1\right) }\right) $ is a free Lie algebra,
generated by the set $X^{\left( 1\right) }$, $I_{\Xi }\left( X^{\left(
1\right) }\right) $ is a multihomogeneous two-sided ideal of this algebra, $%
\bigoplus\limits_{x^{\left( 2\right) }\in X^{\left( 2\right) }}\left( A_{\Xi
}\left( X^{\left( 1\right) }\right) x^{\left( 2\right) }\right) $ is a
direct sum of $\left\vert X^{\left( 2\right) }\right\vert $ copies of $%
A\left( X^{\left( 1\right) }\right) $ cyclic module\linebreak $A_{\Xi
}\left( X^{\left( 1\right) }\right) =A\left( X^{\left( 1\right) }\right)
/B_{\Xi }\left( X^{\left( 1\right) }\right) $, where $A\left( X^{\left(
1\right) }\right) $ is a free associative algebra with unit, generated by
the set $X^{\left( 1\right) }$, $B_{\Xi }\left( X^{\left( 1\right) }\right) $
is a multihomogeneous two-sided ideal of this algebra.

It is clear that $B_{\Lambda }\left( X^{\left( 1\right) }\right) \supseteq
\left\langle X^{\left( 1\right) }\right\rangle $, where $\left\langle
X^{\left( 1\right) }\right\rangle $ is a two-sided ideal of algebra $A\left(
X^{\left( 1\right) }\right) $, generated by all elements of set $X^{\left(
1\right) }$. $\left\langle X^{\left( 1\right) }\right\rangle $ is a maximal
two-sided ideal of $A\left( X^{\left( 1\right) }\right) $, because $A\left(
X^{\left( 1\right) }\right) /\left\langle X^{\left( 1\right) }\right\rangle
\cong k$. $\Lambda $ is a nondegenerate variety, so $B_{\Lambda }\left(
X^{\left( 1\right) }\right) =\left\langle X^{\left( 1\right) }\right\rangle $%
.

Now we consider $B_{\Theta }\left( X^{\left( 1\right) }\right) $. If there
exists $\lambda \in k$ such that $\lambda \neq 0$ and $\lambda \in B_{\Theta
}\left( X^{\left( 1\right) }\right) $, then the $B_{\Theta }\left( X^{\left(
1\right) }\right) =A\left( X^{\left( 1\right) }\right) $ holds. But this
contradicts the fact that $\Theta $ is a nondegenerate variety. Therefore $%
B_{\Theta }\left( X^{\left( 1\right) }\right) \subseteq \left\langle
X^{\left( 1\right) }\right\rangle =B_{\Lambda }\left( X^{\left( 1\right)
}\right) $. By item \ref{c_8} of Claim \ref{cl} we have that $B_{\Delta
}\left( X^{\left( 1\right) }\right) =\left\langle B_{\Theta }\left(
X^{\left( 1\right) }\right) ,B_{\Lambda }\left( X^{\left( 1\right) }\right)
\right\rangle =B_{\Lambda }\left( X^{\left( 1\right) }\right) =\left\langle
X^{\left( 1\right) }\right\rangle $. Hence%
\begin{equation*}
\left( F_{\Delta }\left( X\right) \right) ^{\left( 2\right)
}=\bigoplus\limits_{x^{\left( 2\right) }\in X^{\left( 2\right) }}\left(
\left( A\left( X^{\left( 1\right) }\right) /\left\langle X^{\left( 1\right)
}\right\rangle \right) x^{\left( 2\right) }\right) \cong
\bigoplus\limits_{x^{\left( 2\right) }\in X^{\left( 2\right) }}kx^{\left(
2\right) }
\end{equation*}%
is a vector space over $k$ which has dimension $\left\vert X^{\left(
2\right) }\right\vert $. By Corollary \ref{cor_tau_2} from Proposition \ref%
{tau_2} $\delta _{\Delta ,X}^{\left( 2\right) }\left( \mathfrak{T}\left(
X\right) ^{\left( 2\right) }\right) =\left( F_{\Delta }\left( X\right)
\right) ^{\left( 2\right) }=\delta _{\Delta ,X}^{\left( 2\right) }\left( 
\mathfrak{T}\left( \left\{ 2\right\} ,\Omega ^{\left( 2\right) },X^{\left(
2\right) }\right) \right) $ and by Proposition \ref{delta_2} $\delta
_{\Delta ,X}^{\left( 2\right) }\left( \mathfrak{T}\left( \left\{ 2\right\}
,\Omega ^{\left( 2\right) },X^{\left( 2\right) }\right) \right) $ is a free
algebra of the variety $\Delta ^{\left( 2\right) }$. Now Propositions \ref%
{les2} and \ref{nondeg_vectror_spases} complete the proof.
\end{proof}

\begin{corollary}
$\Delta ^{\left( 2\right) }$ is an IBN-variety.
\end{corollary}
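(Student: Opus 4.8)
The plan is to combine the immediately preceding Proposition \ref{Lie_delta_2_nondeg} with the earlier Example \ref{vector_space}, since between them they already contain everything needed. First I would recall that Proposition \ref{Lie_delta_2_nondeg} identifies the variety $\Delta^{\left( 2\right) }$ explicitly: it is precisely the variety of all vector spaces over the fixed field $k$. This identification is the real content, and once it is in hand the IBN property of $\Delta^{\left( 2\right) }$ follows with no further algebraic work.

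Next I would invoke Example \ref{vector_space}, in which it was observed that the variety of all vector spaces over a fixed field $k$ has the IBN property. The reasoning there is that every vector space is a free algebra of this variety with its basis serving as a set of free generators, a linear map is determined by the images of basis vectors, and isomorphic vector spaces have bases of the same cardinality. Applying this to $\Delta^{\left( 2\right) }$, which Proposition \ref{Lie_delta_2_nondeg} shows to be exactly this variety, yields that $F_{\Delta^{(2)}}\left( Y\right) \cong F_{\Delta^{(2)}}\left( Z\right) $ forces $\left\vert Y\right\vert =\left\vert Z\right\vert $, i.e. $\Delta^{\left( 2\right) }$ is an IBN-variety.

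I expect no genuine obstacle in this corollary: the substantive step was carried out in Proposition \ref{Lie_delta_2_nondeg}, where the free algebras $\left( F_{\Delta}\left( X\right) \right) ^{\left( 2\right) }$ were computed via the structure theorem of \cite{TsurkovSubvarLie} and shown to collapse to ordinary vector spaces once the action is null. The corollary is therefore simply the remark that the concrete variety so identified is one whose IBN property was already recorded in Section \ref{bf}.
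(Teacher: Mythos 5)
Your proposal matches the paper's proof exactly: the paper's entire argument for this corollary is ``See Example \ref{vector_space}'', relying on Proposition \ref{Lie_delta_2_nondeg} having just identified $\Delta^{\left( 2\right) }$ as the variety of all vector spaces over $k$, which Example \ref{vector_space} showed to be an IBN-variety. Your write-up is correct and simply spells out this same two-step reasoning in more detail.
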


\begin{proof}
See Example \ref{vector_space}.
\end{proof}

\setcounter{corollary}{0}

So $\Delta $ is a $2$-nondegenerate $2$-IBN-variety by Corollaries \ref%
{delta_2_nondeg} and \ref{delta_2_IBN} from Proposition \ref{delta_2}. Now
we can conclude from Theorem \ref{main} that $\Theta $ is an IBN-variety.

\begin{example}
\label{group_repr}Varieties of \ representations of groups over a fixed
field $k$.
\end{example}

In this example we consider varieties of representations of groups over a
fixed field $k$. A representation of a group we consider as a $2$-sorted
algebra. The first sort is a sort of elements of a\ group, the second sort
is a sort of vectors of some vector space over a fixed field $k$. As in
Example \ref{vector_space}, a multiplication of vectors by any scalar $%
\lambda \in k$ we consider as an unary operation. All operations over
vectors of the vector space have all arguments of the sort $2$ and give a
result of the sort $2$. All operations over elements of the group: the
multiplication; the operation that gives to each element its inverse and the
constant $1$ - have all arguments of the sort $1$ and give a result of the
sort $1$. An operation of action of elements of the group over vectors of
the vector space, which we denote by $\circ $, has a type $\tau _{\circ
}=\left( 1,2;2\right) $.

We denote by $\Theta $ some nondegenerate variety of representations of
groups over a fixed field $k$. By Proposition \ref{1_nondeg} the variety $%
\Theta ^{\left( 1\right) }$ is nondegenerate. So, $\Theta ^{\left( 1\right)
} $ is a nondegenerate variety of groups and, by Example \ref{gr}, $\Theta
^{\left( 1\right) }$ is an IBN-variety. Hence, by Proposition \ref{1_IBN}, $%
\Theta $ is an $1$-IBN-variety.

As a variety $\Lambda $ we consider the variety of trivial action, i.e., the
variety defined by identity%
\begin{equation*}
x^{\left( 1\right) }\circ x^{\left( 2\right) }=x^{\left( 2\right) }\text{.}
\end{equation*}%
This is identity (\ref{tr_act}) with $s\left( x^{\left( 2\right) }\right)
=x^{\left( 2\right) }$.

\begin{proposition}
\label{group_delta_2_nondeg}$\Delta ^{\left( 2\right) }$ is the variety of
all vector spaces over the field $k$.
\end{proposition}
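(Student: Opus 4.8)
The plan is to mirror the proof of Proposition \ref{Lie_delta_2_nondeg}, replacing the universal enveloping (free associative) algebra by the group algebra of the free group. First I would record the structure theorem for free algebras of a variety $\Xi$ of representations of groups over $k$ (the group analogue of \cite[Theorem 3.1.]{TsurkovSubvarLie}; see \cite[Section 5]{TsurkovManySorted}): for $X=X^{(1)}\uplus X^{(2)}\subset X_0$ one has
\begin{equation*}
\left(F_{\Xi}(X)\right)^{(1)}=F(X^{(1)})/N_{\Xi}(X^{(1)}),
\end{equation*}
\begin{equation*}
\left(F_{\Xi}(X)\right)^{(2)}=\bigoplus_{x^{(2)}\in X^{(2)}}\left(A_{\Xi}(X^{(1)})\,x^{(2)}\right),
\end{equation*}
where $F(X^{(1)})$ is the free group on $X^{(1)}$, $N_{\Xi}$ is a verbal subgroup, and $A_{\Xi}(X^{(1)})=A(X^{(1)})/B_{\Xi}(X^{(1)})$ is a quotient of the group algebra $A(X^{(1)})=k[F(X^{(1)})]$ by a fully invariant two-sided ideal $B_{\Xi}(X^{(1)})$. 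Thus the second sort is a direct sum of $|X^{(2)}|$ copies of the cyclic module $A_{\Xi}(X^{(1)})$, exactly as in the Lie case.

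Next I would pin down $B_{\Lambda}$ and bound $B_{\Theta}$. Let $\omega$ be the augmentation ideal of $A(X^{(1)})$, i.e. the kernel of the augmentation $\epsilon:A(X^{(1)})\to k$, $g\mapsto 1$; it is generated by $\{x-1:x\in X^{(1)}\}$ and is maximal since $A(X^{(1)})/\omega\cong k$. The trivial-action identity forces $(g-1)x^{(2)}=0$, so $\omega\subseteq B_{\Lambda}(X^{(1)})$; since $\Lambda$ is nondegenerate (any group acting trivially on an arbitrary vector space is an object of $\Lambda$), $B_{\Lambda}(X^{(1)})\neq A(X^{(1)})$, whence $B_{\Lambda}(X^{(1)})=\omega$ by maximality. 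Here is where the argument must depart from the Lie case: the group algebra of the free group carries no multigrading by $X^{(1)}$, so I cannot invoke multihomogeneity. Instead I would use that $B_{\Theta}(X^{(1)})$ is fully invariant, hence invariant under the $k$-algebra endomorphism $\epsilon$ induced by the group endomorphism $x\mapsto 1$ (substitution of the first-sort constant $1$, legitimate by Claim \ref{cl}, item \ref{c_7}). For $a\in B_{\Theta}(X^{(1)})$ this gives $\epsilon(a)=\epsilon(a)\cdot 1\in B_{\Theta}(X^{(1)})\cap k$; nondegeneracy of $\Theta$ yields $1\notin B_{\Theta}(X^{(1)})$, so $\epsilon(a)=0$ and $a\in\omega$. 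Therefore $B_{\Theta}(X^{(1)})\subseteq\omega=B_{\Lambda}(X^{(1)})$.

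Finally I would assemble the conclusion just as in Proposition \ref{Lie_delta_2_nondeg}. By Claim \ref{cl}, item \ref{c_8}, $B_{\Delta}(X^{(1)})=\langle B_{\Theta}(X^{(1)}),B_{\Lambda}(X^{(1)})\rangle=\omega$, so
\begin{equation*}
\left(F_{\Delta}(X)\right)^{(2)}=\bigoplus_{x^{(2)}\in X^{(2)}}\left((A(X^{(1)})/\omega)\,x^{(2)}\right)\cong\bigoplus_{x^{(2)}\in X^{(2)}}k\,x^{(2)}
\end{equation*}
is a vector space over $k$ of dimension $|X^{(2)}|$. By Corollary \ref{cor_tau_2} and Proposition \ref{delta_2}, $\delta_{\Delta,X}^{(2)}\left(\mathfrak{T}(\{2\},\Omega^{(2)},X^{(2)})\right)=\left(F_{\Delta}(X)\right)^{(2)}$ is a free algebra of $\Delta^{(2)}$, and $\Delta^{(2)}$ is a subvariety of the variety of all vector spaces over $k$. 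Since its free algebra on $X^{(2)}$ has dimension $|X^{(2)}|$, Proposition \ref{les2} shows $\Delta^{(2)}$ is nondegenerate, so by Proposition \ref{nondeg_vectror_spases} it cannot be a proper subvariety; hence $\Delta^{(2)}$ is the variety of all vector spaces over $k$. The main obstacle is exactly the replacement of multihomogeneity by augmentation-invariance in proving $B_{\Theta}(X^{(1)})\subseteq\omega$; the remaining steps are a faithful transcription of the Lie argument.
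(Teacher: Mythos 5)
Your proof is correct and follows essentially the same route as the paper: the structure theorem for free algebras of varieties of group representations, identification of $B_{\Lambda}(X^{(1)})$ with the augmentation ideal via its maximality, the inclusion $B_{\Theta}(X^{(1)})\subseteq\mathfrak{Aug}(X^{(1)})$, and the same concluding appeal to Corollary \ref{cor_tau_2}, Propositions \ref{delta_2}, \ref{les2} and \ref{nondeg_vectror_spases}. The only difference is cosmetic: where the paper cites \cite[Section 0.2, Example 1]{Vovsi} and \cite[Proposition 0.2.3]{Vovsi} for these two facts, you prove them directly (the second via invariance of $B_{\Theta}(X^{(1)})$ under the endomorphism induced by $x\mapsto 1$), which is a sound, self-contained substitute.
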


\begin{proof}
In the proof of this proposition we denote elements $\delta _{\Xi ,X}\left(
x\right) $, $\delta _{\Lambda ,X}\left( x\right) $, $\delta _{\Theta
,X}\left( x\right) $, $\delta _{\Delta ,X}\left( x\right) $ and $x\in
X\subset X_{0}$ by same symbol $x$. This cannot cause confusion here.

We can prove by method of \cite[Section 3]{TsurkovSubvarLie} that for every
variety $\Xi $ of representations Lie algebras over the field $k$ the free
algebra $F_{\Xi }\left( X\right) $ of this variety generated by set $%
X=X^{\left( 1\right) }\uplus X^{\left( 2\right) }\subset X_{0}$ has this
form:%
\begin{equation*}
\left( F_{\Xi }\left( X\right) \right) ^{\left( 1\right) }=G_{\Xi }\left(
X^{\left( 1\right) }\right) =G\left( X^{\left( 1\right) }\right) /H_{\Xi
}\left( X^{\left( 1\right) }\right) ,
\end{equation*}%
\begin{equation*}
\left( F_{\Xi }\left( X\right) \right) ^{\left( 2\right)
}=\bigoplus\limits_{x^{\left( 2\right) }\in X^{\left( 2\right) }}\left(
kG\left( X^{\left( 1\right) }\right) /I_{\Xi }\left( X^{\left( 1\right)
}\right) \right) x^{\left( 2\right) },
\end{equation*}%
where $G\left( X^{\left( 1\right) }\right) $ is a free group, generated by
the set $X^{\left( 1\right) }$, $H_{\Xi }\left( X^{\left( 1\right) }\right) $
is a fully invariant subgroup of this group, $kG\left( X^{\left( 1\right)
}\right) $ is a $k$-group algebra over the group $G\left( X^{\left( 1\right)
}\right) $, $\bigoplus\limits_{x^{\left( 2\right) }\in X^{\left( 2\right)
}}\left( kG\left( X^{\left( 1\right) }\right) /I_{\Xi }\left( X^{\left(
1\right) }\right) \right) x^{\left( 2\right) }$ is a direct sum of $%
\left\vert X^{\left( 2\right) }\right\vert $ copies of $kG\left( X^{\left(
1\right) }\right) $ cyclic module, $I_{\Xi }\left( X^{\left( 1\right)
}\right) $ is a two-sided ideal of algebra $kG\left( X^{\left( 1\right)
}\right) $, which is invariant under all endomorphisms of $kG\left(
X^{\left( 1\right) }\right) $, defined by endomorphisms of $G\left(
X^{\left( 1\right) }\right) $. See also \cite[Section 0.2]{Vovsi}.

By \cite[Section 0.2, Example 1]{Vovsi}, $I_{\Lambda }\left( X^{\left(
1\right) }\right) =\mathfrak{Aug}\left( X^{\left( 1\right) }\right) $ an
augmentation ideal of $kG\left( X^{\left( 1\right) }\right) $, i.e., kernel
of the homomorphism $\varepsilon :kG\left( X^{\left( 1\right) }\right)
\rightarrow k$, defined by homomorphism $G\left( X^{\left( 1\right) }\right)
\rightarrow \left\{ 1_{k}\right\} $. $I_{\Theta }\left( X^{\left( 1\right)
}\right) \subseteq \mathfrak{Aug}\left( X^{\left( 1\right) }\right) $ by 
\cite[Proposition 0.2.3]{Vovsi}. Therefore, as in Example \ref{Lie_rep}, $%
I_{\Delta }\left( X^{\left( 1\right) }\right) =\left\langle I_{\Theta
}\left( X^{\left( 1\right) }\right) ,I_{\Lambda }\left( X^{\left( 1\right)
}\right) \right\rangle =I_{\Lambda }\left( X^{\left( 1\right) }\right) =%
\mathfrak{Aug}\left( X^{\left( 1\right) }\right) $. Hence%
\begin{equation*}
\left( F_{\Delta }\left( X\right) \right) ^{\left( 2\right)
}=\bigoplus\limits_{x^{\left( 2\right) }\in X^{\left( 2\right) }}\left(
kG\left( X^{\left( 1\right) }\right) /\mathfrak{Aug}\left( X^{\left(
1\right) }\right) \right) x^{\left( 2\right) }\cong
\bigoplus\limits_{x^{\left( 2\right) }\in X^{\left( 2\right) }}kx^{\left(
2\right) }
\end{equation*}%
is a vector space over $k$ which has a dimension $\left\vert X^{\left(
2\right) }\right\vert $. Now we complete the proof as in Example \ref%
{Lie_rep}.
\end{proof}

We have, as in Example \ref{Lie_rep}, that $\Delta $ is a $2$-nondegenerate $%
2$-IBN-variety and $\Theta $ is an IBN-variety.

\section{Acknowledgements}

I am grateful to the Professors A. Sivatski, E. Aladova, N. Cohen, A. Kuzmin
and to my student R. Barbosa Fernandes for fruitful discussions and
important remarks, which helped a lot in writing the article.

\end{document}